%% file: main.tex
\documentclass{article}
\usepackage[cm]{fullpage}
\usepackage{graphicx}
\usepackage{amsmath, amssymb, amsfonts, amsthm}
\usepackage{mathdots}
\usepackage{subcaption}
\usepackage{xcolor}
\usepackage{enumitem}
\usepackage{array}
\usepackage{makecell}
\usepackage{float}
\usepackage{tikz}
\usepackage[numbers]{natbib}
\usepackage{algorithm}
\usepackage{algpseudocode}
\usepackage{nicefrac}
\usepackage{multirow}
\usetikzlibrary{arrows.meta,positioning}

\theoremstyle{definition}

\theoremstyle{plain}
\newtheorem{theorem}{Theorem}
\newtheorem{lemma}{Lemma}
\newtheorem{proposition}{Proposition}
\newtheorem{corollary}{Corollary}

\usepackage[hidelinks]{hyperref}
\usepackage[capitalise]{cleveref}

\makeatletter
\renewcommand*\env@matrix[1][*\c@MaxMatrixCols c]{
  \hskip -\arraycolsep
  \let\@ifnextchar\new@ifnextchar
  \array{#1}}
\makeatother

\makeatletter
\if@titlepage\else
  \renewenvironment{abstract}{%
    \small
    \paragraph{\abstractname}
  }{\par\bigskip}
\fi
\makeatother

\allowdisplaybreaks

\title{Toward a Systematic Understanding and Interactive Search of \\ Lyapunov-Style Proofs in Optimization}

\author{TaeHo Yoon\textsuperscript{*,1}, Jaewook J.\ Suh\textsuperscript{*,2}, Edward D.~H.\ Nguyen\textsuperscript{3}, Bicheng Ying\textsuperscript{4}, Shiqian Ma\textsuperscript{2}}
\date{}

\input{macro}

\begin{document}

\renewcommand\thefootnote{\fnsymbol{footnote}}
\footnotetext[1]{Equal contribution.}
\renewcommand\thefootnote{\arabic{footnote}}
\setcounter{footnote}{0}
\footnotetext[1]{Department of Applied Mathematics and Statistics, Johns Hopkins University. Email: \texttt{tyoon7@jh.edu}.}
\footnotetext[2]{Department of Computational Applied Mathematics and Operations Research, Rice University. Email: \texttt{\{jacksuh, sqma\}@rice.edu}.}
\footnotetext[3]{Department of Electrical and Computer Engineering, Rice University. Email: \texttt{en18@rice.edu}.}
\footnotetext[4]{Google Inc. Email: \texttt{ybc@google.com}.}

\maketitle

\begin{abstract}
Lyapunov-style convergence proofs, which establish a nonincreasing sequence to provide a quantitative convergence rate for an algorithm, are popular and often considered desirable in first-order optimization.
However, existing approaches to finding such Lyapunov functions rely on hand-designed templates or prior insight on the proof structure, and do not certify that the resulting Lyapunov-style analysis provides the sharpest convergence bound.
In this work, we introduce a systematic framework for converting a tight, analytic convergence proof of an optimization algorithm, often found via computer assistance, into an equivalent proof based on Lyapunov functions.
We implement a concrete procedure that combines a performance estimation problem (PEP) toolbox with elementary linear algebra, and show that it captures a number of prior Lyapunov analyses \emph{within a single Jupyter notebook}.
Based on our implementation, a user can straightforwardly test our systematic and interactive procedure on their own optimization algorithm of interest to search for a tight Lyapunov-style proof \emph{via code}, without the need to comprehend the implementation details.
We extend the application of our framework and discover four novel analytic Lyapunov-style proofs, where notably, one of them identifies a new exact optimal proximal algorithm for strongly monotone inclusion problems.
\end{abstract}

\input{sections/1_introduction}
\input{sections/2_background}

\input{sections/3_theory}
\input{sections/4_existing_lyapunov_analyses}

\input{sections/5_new_lyapunov_analyses}
\input{sections/6_conclusion}

\bibliographystyle{plain}
\bibliography{ref}

\appendix

\input{sections/appendix_background}
\input{sections/appendix_linear_algebra_prop}
\input{sections/appendix_bppm_tightness}

\end{document}

%% file: macro.tex
\newcommand{\cut}[1]{{}}

\newcommand{\vxi}{{\boldsymbol{\xi}}}

\newcommand{\vb}{{\mathbf{b}}}

\newcommand{\vd}{{\mathbf{d}}}
\newcommand{\ve}{{\mathbf{e}}}
\newcommand{\vf}{{\mathbf{f}}}
\newcommand{\vg}{{\mathbf{g}}}
\newcommand{\vh}{{\mathbf{h}}}

\newcommand{\vq}{{\mathbf{q}}}
\newcommand{\vr}{{\mathbf{r}}}
\newcommand{\vs}{{\mathbf{s}}}
\newcommand{\vt}{{\mathbf{t}}}
\newcommand{\vu}{{\mathbf{u}}}
\newcommand{\vv}{{\mathbf{v}}}
\newcommand{\vw}{{\mathbf{w}}}
\newcommand{\vx}{{\mathbf{x}}}
\newcommand{\vy}{{\mathbf{y}}}
\newcommand{\vz}{{\mathbf{z}}}

\newcommand{\vA}{{\mathbf{A}}}
\newcommand{\vB}{{\mathbf{B}}}
\newcommand{\vC}{{\mathbf{C}}}
\newcommand{\vD}{{\mathbf{D}}}

\newcommand{\vG}{{\mathbf{G}}}

\newcommand{\vI}{{\mathbf{I}}}

\newcommand{\vL}{{\mathbf{L}}}
\newcommand{\vM}{{\mathbf{M}}}

\newcommand{\vP}{{\mathbf{P}}}
\newcommand{\vQ}{{\mathbf{Q}}}

\newcommand{\vS}{{\mathbf{S}}}

\newcommand{\vV}{{\mathbf{V}}}

\newcommand{\cC}{{\mathcal{C}}}

\newcommand{\cI}{{\mathcal{I}}}
\newcommand{\cJ}{{\mathcal{J}}}

\newcommand{\cL}{{\mathcal{L}}}
\newcommand{\cM}{{\mathcal{M}}}
\newcommand{\cN}{{\mathcal{N}}}
\newcommand{\cO}{{\mathcal{O}}}

\newcommand{\cR}{{\mathcal{R}}}
\newcommand{\cS}{{\mathcal{S}}}
\newcommand{\cT}{{\mathcal{T}}}

\usepackage[bb=boondox]{mathalfa}

\usepackage{xparse}
\DeclareFontFamily{U}{ntxmia}{}
\DeclareFontShape{U}{ntxmia}{m}{it}{<-> ntxmia }{}
\DeclareFontShape{U}{ntxmia}{b}{it}{<-> ntxbmia }{}
\DeclareSymbolFont{lettersA}{U}{ntxmia}{m}{it}
\SetSymbolFont{lettersA}{bold}{U}{ntxmia}{b}{it}

\ExplSyntaxOn
\NewDocumentCommand{\varmathbb}{m}
 {
  \tl_map_inline:nn { #1 }
   {
    \use:c { varbb##1 }
   }
 }
\tl_map_inline:nn { ABCDEFGHIJKLMNOPQRSTUVWXYZ }
 {
  \exp_args:Nc \DeclareMathSymbol{varbb#1}{\mathord}{lettersA}{\int_eval:n { `#1+67 }}
 }
\exp_args:Nc \DeclareMathSymbol{varbbk}{\mathord}{lettersA}{169}
\ExplSyntaxOff

\newcommand{\Tr}{{\mathrm{Tr}}}

\newcommand{\tnabla}{\widetilde{\nabla}}

\DeclareMathOperator*{\argmin}{argmin}

\newcommand{\dom}{\mathrm{dom}\,}

\newcommand{\reals}{\mathbb{R}}

\newcommand{\opA}{{\varmathbb{A}}}

\newcommand{\opI}{{\varmathbb{I}}}
\newcommand{\opJ}{{\varmathbb{J}}}

\newcommand{\opT}{{\varmathbb{T}}}

\newcommand{\inprod}[2]{\left\langle #1,#2 \right\rangle}
\newcommand{\sqnorm}[1]{\left\| #1 \right\|^2}

\newcommand{\topa}{\tilde{\opA}}
\newcommand{\norm}[1]{\left\|#1\right\|}

\newcommand{\pr}[1]{\left( #1 \right)}

\newcommand{\set}[1]{\left\{ #1 \right\}}

\newcommand{\transpose}{\mathsf{T}}

\newcommand{\bmat}[1]{\begin{bmatrix} #1 \end{bmatrix}}

\newcommand{\fullpep}{PEP-style proof}

\newcommand{\compidx}{%
  \mathord{\mathchoice
    {\vcenter{\hbox{\rotatebox{45}{\rule{0.39em}{0.39em}}}}}
    {\vcenter{\hbox{\rotatebox{45}{\rule{0.39em}{0.39em}}}}}
    {\vcenter{\hbox{\rotatebox{45}{\rule{0.29em}{0.29em}}}}}
    {\vcenter{\hbox{\rotatebox{45}{\rule{0.23em}{0.23em}}}}}}%
}

\newcommand{\bregsm}{f}
\newcommand{\bregprox}{g}
\newcommand{\bregaux}{r}
\newcommand{\vbregsm}{\vf}
\newcommand{\vbregprox}{\vg}
\newcommand{\vbregaux}{\vr}
\newcommand{\dualoc}{Dual-OC-Halpern}

%% file: sections/1_introduction.tex
\section{Introduction}
\label{section:introduction}

Convergence proofs for optimization algorithms are clever combinations of inequalities.
Traditionally, the search for such proofs was a form of art, relying heavily on human ingenuity and intuition.
Over the last decade, however, more systematic computer-assisted approaches toward convergence analysis have emerged and quickly gained popularity \citep{DroriTeboulle2014_performance,TaylorHendrickxGlineur2017_smooth,TaylorBach2019_stochastic}.
These works provided frameworks for numerically finding a tight combination of inequalities which yields an optimized convergence guarantee within the search space.
This sparked the discovery of improved convergence guarantees for known algorithms \citep{DroriTeboulle2014_performance, TaylorBach2019_stochastic, guTightSublinearConvergence2020, GuYang2025_tight}, as well as the development of novel algorithms with accelerated/optimal convergence rates \citep{KimFessler2016_optimized, DroriTaylor2020_efficient, KimFessler2021_optimizing, Lieder2021_convergence, Kim2021_accelerated, YoonRyu2021_accelerated, LeeKim2021_fast, ParkRyu2022_exact, TaylorDrori2023_optimal, JangGuptaRyu2025_computerassisted}.

Our work is largely motivated by the history of how these computer-assisted proofs, the so-called performance estimation problem (PEP) style proofs as named in \citep{TaylorHendrickxGlineur2017_smooth}, have been processed within the literature.
Several optimized algorithms found by these techniques, including the optimized gradient method (OGM) \citep{KimFessler2016_optimized}, OGM-G \citep{KimFessler2021_optimizing}---a numerically optimized algorithm for decreasing the gradient norm, the accelerated proximal point method (APPM) \citep{Kim2021_accelerated} or its equivalent fixed-point iteration form (optimal Halpern method; OHM) \citep{Lieder2021_convergence},
were initially presented with convergence proofs establishing positive semidefiniteness of matrix certificates arising from semidefinite programming (SDP) formulation of PEP.
The same applies to some tightened analyses of simpler algorithms such as gradient descent \citep{DroriTeboulle2014_performance, altschulerAccelerationStepsizeHedging2025, altschulerAccelerationStepsizeHedging2025a, grimmerAcceleratedObjectiveGap2025} or proximal point method \citep{guTightSublinearConvergence2020,GuYang2025_tight}.

While the aforementioned proofs reflect how these numerical certificates are typically obtained through computational methods, they are not necessarily the most easily comprehensible form for humans.
Consequently, some of these proofs were later reworked into arguably more concise and human-interpretable forms \citep{dAspremontScieurTaylor2021_acceleration,ParkParkRyu2023_factorsqrt2,DiakonikolasWang2022_potential,LeeParkRyu2021_geometric,Diakonikolas2020_halpern,TeboulleVaisbourd2023_elementary},
often in the form of establishing a \emph{Lyapunov function}, or \emph{potential function}, a sequence of scalar quantities that is nonincreasing along the iterations of the algorithm.
However, translating the tight, numerically obtained PEP-style proofs into equivalent Lyapunov-style proofs is not a trivial task, and has been carried out on a case-by-case basis based on experts' intuition, rather than deductively from an overarching principle.
In this work, we fill in this gap by providing a general characterization of such procedures.

\paragraph{Main contributions.}
This work presents the following contributions.
\begin{itemize}
    \item First, we provide a systematic characterization of when a seemingly complex convergence proof can be translated into a concise Lyapunov-style proof, and in such cases, provide a procedure for constructing them (\cref{section:theory}).
    This procedure is general and is applicable to multiple popular problem settings in first-order optimization, including the ones presented in \cref{table:contribution-summary}.
    However, our framework is not confined to these problem classes and can be readily extended to other ones of interest.
    
    \item We recover a number of materially distinct Lyapunov analyses in the literature, each developed through at least one prior work, as a special case of our framework (\cref{section:existing-lyapunov}).
    This demonstrates that our framework is capable of unifying a non-trivial process that has been performed by experts on a case-by-case basis.

    \item Using our proposed framework, we discover four previously unknown Lyapunov-style proofs (\cref{section:new-lyapunov}).
    In one of them, we introduce a novel algorithm, the \emph{dual optimal contractive Halpern method (Dual-OC-Halpern)}, which is an exact minimax optimal algorithm for strongly monotone inclusion problems, or the equivalent contractive fixed-point problems.
    This demonstrates that our procedure is indeed a practical tool for conducting novel analysis with less manual effort.

    \item For all existing and new analyses that we cover, we provide the associated code implementation showing that our procedure can be streamlined into a single \texttt{Jupyter notebook}, from the numerical certificate computation to deriving and symbolically verifying the corresponding Lyapunov analysis.
\end{itemize}

\input{figures/contribution_table}

\subsection{Prior work}

\paragraph{Performance Estimation Problem (PEP).}

Performance Estimation Problem (PEP) is a computer-assisted framework for analyzing the quantitative convergence rate of optimization algorithms. The core idea was first introduced by \cite{DroriTeboulle2014_performance} and subsequently refined into an established framework in \cite{TaylorHendrickxGlineur2017_smooth} for smooth (strongly) convex minimization. 
It was then extended to a number of distinct settings including composite convex minimization \cite{TaylorHendrickxGlineur2018_exact}, convex minimization under Bregman geometry \cite{dragomirOptimalComplexityCertification2022}, operator splitting \cite{RyuTaylorBergelingGiselsson2020_operator} and linear operators \cite{BousselmiHendrickxGlineur2024_interpolation}.
PEP reformulates the problem of identifying the worst-case rate of a first-order algorithm into a tractable optimization problem—typically a semidefinite program (SDP)—that can be numerically solved by a solver, and the dual solution to this SDP encodes a proof of the form
\begin{equation}    \label{eq:proof_template} \tag{\fullpep}
    \texttt{(PerformanceMetric)} - \nu \times \texttt{(InitialCondition)}
= - \sum_{i,j} \lambda_{i,j} I_{i,j} - \sum_{i} \| v_i \|^2
\le 0 .
\end{equation}
Here, $I_{i,j} \ge 0$ are nonnegative inequality terms characterizing the function/operator class of consideration, and $\nu, \lambda_{i,j} \ge 0$ and $v_i \in \mathbb{R}^n$ are nonnegative scalars and vectors (proof certificates) corresponding to the dual variables. 
We refer to this form of proof as a \textit{\fullpep} in this paper. 
We discuss this in more detail with examples of problem classes in \Cref{section:background}.

\paragraph{Novel findings via PEP.}
PEP has been proved useful for discovering optimized analyses and algorithms over the last decade.
The tight convergence rate and analysis of constant-step-size gradient descent \cite{DroriTeboulle2014_performance}
and the optimized gradient method (OGM) \cite{DroriTeboulle2014_performance, KimFessler2016_optimized}, later shown to be an exact minimax optimal algorithm for smooth convex minimization \cite{Drori2017_exact}, 
were the earliest such discoveries enabled by PEP.
OGM improves upon Nesterov's accelerated gradient method \cite{Nesterov1983_method} approximately by a factor of $2$. 
Then, OGM-G \cite{KimFessler2021_optimizing} achieved the first 
algorithm that achieves an accelerated, up-to-constant optimal rate with respect to the gradient norm in smooth convex minimization: $\| \nabla f(x_k) \|^2 = \mathcal{O}\pr{ (f(x_0) - f(x_\star)) / k^2 }$, which further induces $\| \nabla f(x_k) \|^2 = \mathcal{O}\pr{ \| x_0 - x_\star \|^2 / k^4 }$ by concatenation with OGM. 
Optimal methods for other setups, such as strongly convex minimization \citep[ITEM]{TaylorDrori2023_optimal}, Bregman-relatively-smooth minimization \cite{dragomirOptimalComplexityCertification2022}, nonsmooth convex minimization with bounded gradients \cite{DroriTaylor2020_efficient} and composite optimization \citep[OptISTA]{JangGuptaRyu2025_computerassisted}, were also developed through the PEP framework; 
in particular, the discovery of OptISTA leveraged the broader BnB-PEP formulation \cite{DasGuptaVanParysRyu2023_branchandbound}, utilizing nonconvex quadratically constrained quadratic programs (QCQP) and branch-and-bound techniques. 
Based on the numerical observation from \cite{DasGuptaVanParysRyu2023_branchandbound}, subsequent works \cite{grimmerProvablyFasterGradient2024, altschulerAccelerationStepsizeHedging2025, altschulerAccelerationStepsizeHedging2025a, grimmerAcceleratedObjectiveGap2025} have presented the acceleration of gradient descent in smooth convex minimization by modifying the step-size schedule.

Beyond convex minimization, 
\cite{Kim2021_accelerated} proposed the accelerated proximal point method (APPM), an accelerated proximal algorithm for maximally monotone inclusion problems. 
Independently, \cite{Lieder2021_convergence} proposed the optimal Halpern method (OHM), an exact optimal algorithm for nonexpansive fixed-point problems in Hilbert spaces. 
In fact, APPM and OHM are equivalent, as it was later established in \cite{ContrerasCominetti2023_optimal, RyuYin2022_largescale}, and \cite{ParkRyu2022_exact} have shown that they are exact optimal for their respective problem classes. 
\cite{ParkRyu2022_exact} also introduced the equivalent algorithms, optimal strongly-monotone proximal point method (OS-PPM) for maximally $\mu$-strongly-monotone inclusions and the optimized contractive Halpern (OC-Halpern) for contractive fixed-point problems, both of which are exact minimax optimal with matching lower bounds.
Similar advances have been made in minimax optimization.
For smooth convex-concave minimax optimization, the extra anchored gradient (EAG) \citep{YoonRyu2021_accelerated} and anchored Popov's scheme \cite{Tran-DinhLuo2021_halperntype} have obtained accelerated $\cO(\sqnorm{x_0 - x_\star} / k^2)$ rates on the squared gradient/residual norm, which was extended to the setting with weakly comonotone saddle gradient operator by the fast extragradient (FEG) algorithm \cite{LeeKim2021_fast}.
Multiple tighter refinements to the analyses of existing methods have also been developed, including the proximal point method (PPM) \cite{guTightSublinearConvergence2020}, extragradient \cite{GorbunovLoizouGidel2022_extragradient}, optimistic gradient method \cite{gorbunovLastiterateConvergenceOptimistic2022}, Douglas--Rachford and Davis--Yin splitting \cite{RyuTaylorBergelingGiselsson2020_operator, NguyenSuhJiangMa2025_exact}, the alternating direction method of multipliers (ADMM) \cite{ZamaniAbbaszadehpeivastideKlerk2024_exact}, and primal--dual methods such as Chambolle--Pock/PDHG \cite{BousselmiHendrickxGlineur2024_interpolation}.

The PEP framework, and results driven by it, have additionally inspired a novel concept of duality operation between first-order algorithms \cite{KimOzdaglarParkRyu2023_timereversed}.
The step-sizes defining the OGM-G algorithm was numerically observed to be a rearrangement of those of OGM \cite{KimFessler2021_optimizing}, and \cite{KimOzdaglarParkRyu2023_timereversed} formalized and generalized this relationship by introducing the concept of H-duality.
Similar approaches were also presented in \cite{KimYang2023_unifying, KimYang2023_convergence}.
\cite{YoonKimSuhRyu2024_optimal} extended H-duality to nonexpansive fixed-point problems (or the equivalent proximal monotone inclusion setting) and showed that the H-duals of OHM and FEG achieve the exact same last-iterate rate as their respective primal algorithms.
For this particular setup, an infinite family of exact optimal algorithms was fully characterized in \cite{YoonRyuGrimmerInvariance2025} and the H-duality between these optimal algorithms has been further explored in \citep{yoon2026theorycompositiondualityextremal}.

\paragraph{Lyapunov analysis in optimization and computer assistance.} 
Once an appropriate Lyapunov function encoding the intermediate progress of an algorithm is given, one can verify its nonincreasingness along the iterations to carry out the convergence analysis.
Such a strategy has been extensively used in the first-order optimization literature, within prior work including but not limited to \cite{Nesterov1983_method, 
VanScoyFreemanLynch2018_fastest, 
BansalGupta2019_potentialfunction, TaylorBach2019_stochastic, dAspremontScieurTaylor2021_acceleration, YoonRyu2021_accelerated, LeeKim2021_fast, LeeParkRyu2021_geometric, DiakonikolasWang2022_potential, ParkRyu2022_exact, KimOzdaglarParkRyu2023_timereversed,  ParkRyu2024_optimal, YoonKimSuhRyu2024_optimal, yoonAcceleratedMinimaxAlgorithms2025,JangGuptaRyu2025_computerassisted}.
A central challenge in establishing a Lyapunov analysis is to identify a correct form to work with, as there is no universal strategy for it.
To address this, several computer-assisted approaches for identifying Lyapunov functions or similar type of convergence argument have been developed, either based on the integral quadratic constraints (IQC) perspective \citep{lessardAnalysisDesignOptimization2016, TaylorVanScoyLessard2018_lyapunov} or via general SDP formulation \citep{TaylorVanScoyLessard2018_lyapunov, TaylorBach2019_stochastic, UpadhyayaBanertTaylorGiselsson2025_automated}.
The ideas from this line of work have also been integrated into the software package \texttt{AutoLyap} \citep{UpadhyayaGuptaTaylorBanertGiselsson2026_autolyap}. 
To clarify, while alternative terminologies such as potential/energy functions are often used in the literature, sometimes with distinguished usage, we broadly use the terms ``Lyapunov function'' or ``Lyapunov-style proof'' to include any arguments that establish a nonincreasing quantity.
These frameworks automate the search over a prescribed candidate family of Lyapunov functions, which is a quadratic function (ansatz) involving a chosen collection of vectors.
A suitable Lyapunov function satisfying desirable properties exists within the family if and only if a corresponding semidefinite feasibility problem has a solution, as shown in \cite{UpadhyayaBanertTaylorGiselsson2025_automated}, which enables numerical search with computer assistance.

On the other hand, \ref{eq:proof_template} is more general in the sense that it is not constrained to a specific proof template, and because the search space is larger, it may produce a tighter result, often strictly.
In fact, without a good intuition on the candidate family of Lyapunov function, prior approaches may fail to identify the tight performance guarantee for a given algorithm.
Indeed, for historical examples such as OGM, OGM-G and APPM, 
the \ref{eq:proof_template}s were obtained first and their Lyapunov analyses appeared years later \citep{dAspremontScieurTaylor2021_acceleration,LeeParkRyu2021_geometric, ParkParkRyu2023_factorsqrt2,RyuYin2022_largescale}. 
Furthermore, there are remaining instances of \ref{eq:proof_template}s for which no corresponding Lyapunov analysis exists, such as \cite{guTightSublinearConvergence2020, dragomirOptimalComplexityCertification2022, GuYang2025_tight, altschulerAccelerationStepsizeHedging2025, GrimmerShuWang2025_composing,  YoonRyuGrimmerInvariance2025}. 
To the best of our knowledge, the only prior work that attempted to relate \ref{eq:proof_template}s to Lyapunov-style proofs is a short note from \cite[Section~6]{goujaudFundamentalProofStructures2023}.
We emphasize that the approach we take in this paper is distinct and \textit{complementary} to those of \citep{TaylorVanScoyLessard2018_lyapunov, TaylorBach2019_stochastic, UpadhyayaBanertTaylorGiselsson2025_automated, UpadhyayaGuptaTaylorBanertGiselsson2026_autolyap} that are based on predefined Lyapunov template.
We focus on how a \ref{eq:proof_template} can be transformed into a Lyapunov-style proof while maintaining the same proof certificates, and provide a systematic procedure for that conversion by identifying the basis vectors representing the incremental layers of the \ref{eq:proof_template}.

\paragraph{Software implementations of PEP.}

Several software packages have made computer-assisted performance analysis of algorithms more
accessible.
\texttt{PESTO} \citep{TaylorHendrickxGlineur2017_performance} is a MATLAB
toolbox for automatically computing tight worst-case performance guarantees
through PEP, and \texttt{PEPit} \citep{pepit2024} provides a generic open-source
Python interface for modeling and solving PEP.
For Lyapunov-style analysis, \texttt{AutoLyap}
\citep{UpadhyayaGuptaTaylorBanertGiselsson2026_autolyap} supports automated
search over prescribed Lyapunov templates.
We use \texttt{PEPFlow} \citep{SuhYingJiangNguyen2025_pepflow} as the 
basis for implementing our proposed procedure which transforms a 
\fullpep{} into a Lyapunov-style proof.
This is because \texttt{PEPFlow} provides a convenient tagging system for vectors corresponding to the algorithm iterates, function value or gradient/operator evaluated at those points, and the inner product terms among them.
This feature is useful for maintaining an interpretable implementation where mathematical notations can be directly used in the code to access the vectors and matrices needed for linear algebraic manipulations in our procedure.

\subsection{Notation and convention}

Here we quickly list the notation that will be used throughout the paper. 
$\reals^{m\times n}$ denotes the space of all $m\times n$ matrices.
$\mathbb{S}^n$ is the space of all symmetric $n\times n$ matrices, and we denote the space of all symmetric positive semidefinite $n\times n$ matrices by $\mathbb{S}_+^n$.
The $i$-th standard basis vector, in any Euclidean space of dimension $\ge i$, will be denoted by $\ve_i$.
Given a square matrix $A$, $\mathrm{Sym}(A)$ is its symmetrization: $\mathrm{Sym}(A) = \frac{1}{2}(A + A^\transpose)$.
For a general $m\times n$ matrix, $\cR (A)$ is its range (column space) and $\cN (A)$ is its null space.
We often use $\star$ as an index for denoting the optimum, and we will take the convention $\star=\infty$ and often use it with other number indices for specifying the iterates;
then we can conveniently state $\star > k$ for any $k\in \mathbb{N}$.
Finally, we will reserve the notation $N$ for the last iterate or termination horizon of the algorithm where the performance measure is evaluated.

\paragraph{Organization.}
The rest of the paper is organized as follows. 
\cref{section:background} sets up the preliminary technical concepts for the problem settings of interest for this work.
\cref{section:theory} explains how a Lyapunov function $V_k$ can be constructed from a PEP-style proof and how we characterize admissible $V_k$'s.
Then, we present detailed steps for detecting a concise expression for $V_k$ and carrying out the analytic proof from it.
\cref{section:existing-lyapunov} presents existing Lyapunov-style analyses that can be recovered by our framework.
\cref{section:new-lyapunov} presents new Lyapunov-style analyses discovered using our framework: gradient descent, Bregman proximal gradient method for composite minimization, proximal point method for monotone inclusion, and Dual-OC-Halpern.
For all algorithms covered in Sections~\ref{section:existing-lyapunov} and \ref{section:new-lyapunov}, we provide the accompanying \texttt{Jupyter notebook} implementation of the full procedure, which we open source through the codebase
{\color{blue}
\begin{center}
    \url{https://github.com/pepflow-lib/PEPFlow/tree/main/examples_lyapunov} 
\end{center}}

%% file: figures/contribution_table.tex
\begin{table}[H]
\centering
\renewcommand{\arraystretch}{1.25}
\begin{tabular}{cccc}
\hline
\textbf{Problem setting} & \textbf{Algorithm} & \textbf{Lyapunov proof} & \textbf{Section} \\
\hline
\multirow{4}{*}{Smooth convex minimization}
& GD \cite{DroriTeboulle2014_performance} 
& \textbf{New}
& \S\ref{section:gd}  \\
& OGM1 \cite{KimFessler2016_optimized}
& \cite{dAspremontScieurTaylor2021_acceleration, ParkParkRyu2023_factorsqrt2}
& \S\ref{section:ogm-without-z} \\
& OGM2 \cite{KimFessler2016_optimized}
& \cite{dAspremontScieurTaylor2021_acceleration, ParkParkRyu2023_factorsqrt2}
& \S\ref{section:ogm} \\
& OGM-G \cite{KimFessler2021_optimizing}
& \cite{LeeParkRyu2021_geometric}
& \S\ref{section:ogm-g} \\
\hline
\multirow{1}{*}{Bregman composite minimization}
& BPGM \cite{ZhouLiangShen2019_simple, GutmanPena2023_perturbed}
& \textbf{New}
& \S\ref{section:bpgm} \\
\hline
\multirow{2}{*}{Smooth minimax optimization}
& FEG \cite{LeeKim2021_fast}
& \cite{LeeKim2021_fast}
& \S\ref{section:feg} \\
& Dual-FEG \cite{YoonKimSuhRyu2024_optimal}
& \cite{YoonKimSuhRyu2024_optimal}
& \S\ref{section:dual-feg} \\
\hline
\multirow{3}{*}{%
\begin{tabular}[c]{@{}c@{}}
Proximal monotone inclusion \\
($\leftrightarrow$ Nonexpansive fixed-point problem)
\end{tabular}}
& APPM \cite{Kim2021_accelerated} / OHM \cite{Lieder2021_convergence}
& \cite{Diakonikolas2020_halpern, RyuYin2022_largescale}
& \S\ref{section:appm-ohm} \\
& Dual-OHM \cite{YoonKimSuhRyu2024_optimal}
& \cite{YoonKimSuhRyu2024_optimal}
& \S\ref{section:dual-ohm} \\
& PPM \cite{guTightSublinearConvergence2020}
& \textbf{New}
& \S\ref{section:ppm} \\
\hline
\multirow{2}{*}{%
\begin{tabular}[c]{@{}c@{}}
Proximal strongly monotone \\
($\leftrightarrow$ Contractive fixed-point problem)
\end{tabular}}
& OS-PPM / OC-Halpern \cite{ParkRyu2022_exact}
& \cite{ParkRyu2022_exact}
& \S\ref{section:os-ppm-and-oc-halpern} \\
& \textbf{Dual-OC-Halpern (New)}
& \textbf{New}
& \S\ref{section:dual-oc-halpern} \\
\hline
\end{tabular}
\caption{Summary of algorithms covered by the framework proposed in this paper.
For each algorithm, if it is classical, we cite the work providing its tight convergence proof, while if it is relatively recent, we cite the work that first introduced the algorithm and proved its convergence.
The ``Lyapunov proof'' column lists the work that provided a Lyapunov-style proof with tight rate for each algorithm, and ``New'' indicates that we newly introduce them in our \cref{section:new-lyapunov}. 
}
\label{table:contribution-summary}
\end{table}

%% file: sections/2_background.tex
\section{Problem settings and convergence proofs of consideration}
\label{section:background}

In this section, we review how tight \ref{eq:proof_template}s are written in the two settings considered in this work: smooth convex minimization and (strongly) monotone inclusion with proximal operations.
While we additionally consider smooth convex-concave minimax optimization (via reformulation into Lipschitz-continuous monotone inclusion) and composite minimization under Bregman geometry in the paper, we defer the detailed descriptions for these settings to \ref{section:minimax-optimization} and Appendix~\ref{section:setting-composite-minimization-bregman}.

\subsection{Smooth convex minimization}
\label{section:setting-smooth-convex-minimization}

Suppose that we are interested in analyzing the convergence of a deterministic first-order algorithm for solving
\[
\begin{array}{cc}
    \underset{x\in \reals^d}{\text{minimize}} & f(x)
\end{array}
\]
where the objective function $f\colon \reals^d \to \reals$ is convex and $L$-smooth (i.e., $f$ is differentiable and $\nabla f$ is $L$-Lipschitz continuous), where $L>0$, and $f$ has a minimizer $x_\star \in \reals^d$.
It is well known that $f$ is convex and $L$-smooth if and only if the \emph{cocoercivity inequalities}
\[
f(y) - f(x) + \inprod{\nabla f(y)}{x-y} + \frac{1}{2L} \sqnorm{\nabla f(x) - \nabla f(y)} \le 0
\]
holds for all $x,y\in \reals^d$ \citep[Theorem~2.1.5]{Nesterov2004_introductory}.
Furthermore, when the algorithm is a fixed-step first-order method with update rule of the form 
\[x_{k+1} = x_k - \sum_{j=0}^k h_{k+1,j} \nabla f(x_j)\]
where $h_{k+1,j} \in \reals$ are pre-defined step-sizes for $k=0,1,\dots$ and $j=0,\dots,k$, 
then it has been proved \citep{TaylorHendrickxGlineur2017_smooth} that its 
tight\footnote{This means that there exists a convex and $L$-smooth function $f$ defined on $\reals^d$ with sufficiently large $d$ such that running the algorithm on that function yields $f(x_N) - f(x_\star) = \nu\sqnorm{x_0 - x_\star}$,
i.e., the worst-case bound bound cannot be improved.}
dimension-independent worst-case convergence rate of the form $f(x_N) - f(x_\star) \le \nu \sqnorm{x_0 - x_\star}$ 
can always be established by a proof of the form
\begin{align}
\begin{aligned}
    f(x_N) - f(x_\star) - \nu \sqnorm{x_0 - x_\star} & = \sum_{i,j \in \{0,\dots,N,\star\}} \lambda_{i,j} \left( f(x_j) - f(x_i) + \inprod{\nabla f(x_j)}{x_i - x_j} + \frac{1}{2L} \sqnorm{\nabla f(x_i) - \nabla f(x_j)} \right) \\
    & \quad - \sum_{i \in \cJ} \alpha_i \sqnorm{\begin{bmatrix} x_0 & x_\star & \nabla f(x_0) & \cdots & \nabla f(x_N) \end{bmatrix} \vs_i} \\
    & \le 0 .
\end{aligned}
\label{eqn:pep-proof-smooth-convex}
\end{align}
Here $\lambda_{i,j} \ge 0$ are weights associated with the cocoercivity inequalities between the iterates, characterizing the problem class, 
while $\alpha_i \ge 0$ and $\vs_i \in \reals^{N+3}$ are quantities representing the sum of squares terms, indexed with a finite set $\cJ$.
The quantities $\lambda_{i,j}$, $\alpha_i$ and $\vs_i$ are what determine a convergence proof, and we call them \emph{proof certificates}.

Following \citep{TaylorHendrickxGlineur2017_smooth}, we introduce an equivalent reformulation enabling the numerical search of proof certificates within the PEP framework via solving semidefinite programs.
This will also allow us to design the procedure for Lyapunov-style conversion based on linear algebraic techniques, which will be explained in the next section. 
Let
\begin{align*}
    \vP = \begin{bmatrix} x_0 & x_\star & \nabla f(x_0) & \cdots & \nabla f(x_N) \end{bmatrix} \in \reals^{d \times (N+3)} ,
\end{align*}
$\vx_0 = \ve_1$, $\vx_\star = \ve_2$, $\vg_i = \ve_{i+3}$ for $i=0,\dots,N$ and inductively define $\vx_1, \dots, \vx_N$ by
\begin{align*}
    \vx_{i+1} = \vx_i - \sum_{j=0}^i h_{i+1,j} \vg_j
\end{align*}
for $i=0,\dots,N-1$, so that $\vP\vx_i = x_i$ and $\vP\vg_i = \nabla f(x_i)$ for all $i\in \{0,\dots,N\}$.
Then let $\vG = \vP^\transpose \vP \in \mathbb{S}_+^{N+3}$ be the Gram matrix of the column vectors of $\vP$.
Finally, let
\begin{align*}
    \vf = \begin{bmatrix} f(x_0) & f(x_1) & \cdots & f(x_N) & f(x_\star) \end{bmatrix}^\transpose \in \reals^{N+2} .
\end{align*}
With this setup, we can rewrite \eqref{eqn:pep-proof-smooth-convex} as
\begin{align*}
    \mathbf{f}_{N+1} - \mathbf{f}_{N+2} - \nu \mathrm{Tr}(\vG \vD) = \sum_{i,j\in \{0,\dots,N,\star\}} \lambda_{i,j} (\vf_j - \vf_i + \mathrm{Tr}(\vG\vC_{i,j})) - \sum_{i\in \cJ} \alpha_i \mathrm{Tr}(\vG \vS_i) ,
\end{align*}
where we let $\vf_\star := \vf_{N+2} = f(x_\star)$, $\vC_{i,j} := \mathrm{Sym}\left( \vg_j \left(\vx_i - \vx_j\right)^\transpose \right) + \frac{1}{2L} (\vg_i - \vg_j) (\vg_i - \vg_j)^\transpose \in \mathbb{S}^{N+3}$ encodes the inner product terms appearing in the cocoercivity inequality between $x_i$ and $x_j$,
$\vD = (\vx_0-\vx_\star)(\vx_0-\vx_\star)^\transpose \in \mathbb{S}_+^{N+3}$ represents the initial condition, and each $\vS_i = \vs_i \vs_i^\transpose \in \mathbb{S}_+^{N+3}$ is a rank~1 matrix corresponding to a single norm square term constituting \eqref{eqn:pep-proof-smooth-convex}.

\subsection{Maximally monotone inclusion with proximal algorithms}
\label{section:setting-proximal-monotone-inclusion}

A monotone inclusion is given by the form
\[
\begin{array}{cc}
    \underset{x\in \reals^d}{\text{find}} & 0 \in \opA(x)
\end{array}
\]
where $\opA\colon \reals^d \rightrightarrows \reals^d$ is a multi-valued (set-valued) operator that is \emph{maximally $\mu$-strongly monotone} for $\mu \ge 0$, i.e., $\inprod{u-v}{x-y} - \mu \sqnorm{x-y} \ge 0$ for all $x,y\in \reals^d$, $u\in \opA(x)$ and $v\in \opA(y)$ and has a graph that cannot be enlarged without violating monotonicity.
When $\mu=0$, this reduces to the maximally monotone setting.
For conciseness, we denote $\opA x = \opA(x)$ for $x\in \reals^d$.
We assume that $\opA$ has a zero, i.e., $x_\star \in \reals^d$ such that $0 \in \opA x_\star$.
We consider the proximal algorithms that use the resolvent $\opJ_\opA = (\opI + \opA)^{-1}$; more concretely, algorithms such that
\begin{align}
\label{eqn:H-matrix-algorithm-monotone-proximal}
    x_k = \opJ_\opA (y_{k-1}) , \quad
    y_k = y_{k-1} + \sum_{j=1}^k h_{k,j} \tilde{\opA} x_j
\end{align}
for $k=1,2,\dots$, where $x_0 = y_0$ is the initial point, $\tilde{\opA} x_{j+1} = y_j - x_{j+1} \in \opA x_{j+1}$, and $h_{k,j}$ are pre-determined step-sizes for $k=1,2,\dots$ and $j=1,\dots,k$.

With the correspondence $\opT = \left(1 + \frac{1}{1+2\mu}\right) \opJ_\opA - \frac{1}{1+2\mu} \opI$, the problem class becomes equivalent to the class of fixed point problems \citep{ParkRyu2022_exact}:
\begin{align*}
\begin{array}{cc}
    \underset{y \in \reals^d}{\text{find}}  &  y = \opT (y)
\end{array}
\end{align*}
where $\opT\colon \reals^d \to \reals^d$ is \emph{$\frac{1}{\gamma}$-contractive}, i.e., $\norm{\opT x - \opT y} \le \frac{1}{\gamma}\norm{x-y}$ for all $x,y\in \reals^d$,
where $\gamma = 1+2\mu \ge 1$.
(When $\gamma=1$, we say $\opT$ is \emph{nonexpansive}.)
The zero of $\opA$ is a fixed point of $\opT$, i.e., $x_\star = y_\star$ satisfies $y_\star = \opT y_\star$.
Furthermore, the algorithm class can be equivalently rewritten as
\begin{align*}
    y_{k+1} = y_k + \sum_{j=0}^k \frac{\gamma}{\gamma +1} h_{k+1,j+1} (y_j - \opT y_j) .
\end{align*}
The inequalities characterizing the proximal monotone inclusion setting and the nonexpansive fixed point setting are equivalent:
\begin{align*}
    \inprod{\tilde{\opA} x_i - \tilde{\opA} x_j}{x_i - x_j} - \mu \sqnorm{x_i - x_j} \ge 0 & \iff \frac{1}{\gamma^2} \sqnorm{y_{i-1} - y_{j-1}} - \sqnorm{\opT y_{i-1} - \opT y_{j-1}} \ge 0 \\
    \inprod{\tilde{\opA} x_i}{x_i - x_\star} - \mu \sqnorm{x_i - x_\star} \ge 0 & \iff \frac{1}{\gamma^2} \sqnorm{y_{i-1} - y_\star} - \sqnorm{\opT y_{i-1} - \opT y_\star} \ge 0
\end{align*}
The performance metric is $\sqnorm{\tilde{\opA} x_N}$, or equivalently up to a constant factor, $\sqnorm{y_{N-1} - \opT y_{N-1}}$.

By \citep{RyuTaylorBergelingGiselsson2020_operator}, any tight proof is given by the form
\begin{align}
\label{eqn:pep-proof-monotone-proximal}
\begin{aligned}
    \sqnorm{\topa x_N} - \nu \sqnorm{x_0 - x_\star} = & -\sum_{\substack{i,j \in \{1,\dots,N,\star\}\\ i<j}} \lambda_{i,j} \left( \inprod{\tilde{\opA} x_i - \tilde{\opA} x_j}{x_i - x_j} 
    - \mu \sqnorm{x_i - x_j} \right) \\
    & - \sum_{i\in \cJ} \alpha_i \sqnorm{\begin{bmatrix} x_0 & x_\star & \tilde{\opA} x_1 & \cdots & \tilde{\opA} x_N \end{bmatrix} \vs_i} .
\end{aligned}
\end{align}
Next, we present the similar PEP-style reformulation for this setting.
Let $g_i=\tilde{\opA}x_i\in \opA x_i$ for $i=1,\dots,N$ and set $g_\star = 0 \in \opA x_\star$.
Let
\[
    \vP = \begin{bmatrix}x_0 & x_\star & g_1 & \cdots & g_N\end{bmatrix}\in\reals^{d\times(N+2)}
\]
and let $\vx_0=\vy_0=\ve_1$, $\vx_\star=\ve_2$, $\vg_i=\ve_{i+2}$ for $i=1,\dots,N$ and $\vg_\star=0$.
Define $\vy_k, \vx_k$ for $k=1,2,\dots$ by
\[
    \vy_{k} = \vy_{k-1} + \sum_{j=1}^k h_{k,j} \vg_j , \quad \vx_k = \vy_{k-1} - \vg_k ,
\]
so that $\vP\vx_k=x_k$ and $\vP\vy_k = y_k$ for all $k\ge 0$.
With $\vG=\vP^\transpose\vP$, define
\[
    \vM_{i,j}^{(\mu)}
    :=
    \mathrm{Sym}\left((\vg_i-\vg_j)(\vx_i-\vx_j)^\transpose\right)
    -\mu(\vx_i-\vx_j)(\vx_i-\vx_j)^\transpose ,
\]
$\vQ_N=\vg_N\vg_N^\transpose$ and $\vD=(\vx_0-\vx_\star)(\vx_0-\vx_\star)^\transpose$.
Then \eqref{eqn:pep-proof-monotone-proximal} can be rewritten as
\[
    \mathrm{Tr}(\vG\vQ_N)-\nu\mathrm{Tr}(\vG\vD)
    =
    -\sum_{\substack{i,j \in \{1,\dots,N,\star\}\\ i<j}}\lambda_{i,j}\mathrm{Tr}(\vG\vM_{i,j}^{(\mu)})
    -\sum_{i\in\cJ}\alpha_i\mathrm{Tr}(\vG\vS_i)
\]
where $\lambda_{i,j}\ge0$, $\alpha_i\ge0$, and $\vS_i=\vs_i\vs_i^\transpose \succeq 0$.

%% file: sections/3_theory.tex
\section{Theory for searching Lyapunov-style proofs}
\label{section:theory}

Given a PEP-style proof as provided in the previous section, our qualitative goal is to translate it into a simple Lyapunov-style proof.
In this section, we first define what we consider as an \emph{admissible} Lyapunov analysis, and based on that characterization, provide a concrete five-step procedure for identifying a Lyapunov structure (\cref{fig:lyapunov-conversion-workflow}).
Throughout the section, we focus on the case of non-strongly monotone inclusion problems---the setting described in Section~\ref{section:setting-proximal-monotone-inclusion} with $\mu=0$---for simplicity, but the same idea easily extends to all the other settings.

\input{figures/lyapunov_conversion_workflow}

\subsection{Construction of a Lyapunov function}

Consider the proof format \eqref{eqn:pep-proof-monotone-proximal}. 
Let $\cI = \left\{(i,j): i,j \in \{1,\dots,N,\star\}, i < j , \lambda_{i,j} > 0 \right\}$ be the set of indices corresponding to inequalities that are ``active'' within the proof.
Then any increasing sequence of subsets $\cI_0 \subset \cI_1 \subset \dots \subset \cI_{N-1} \subset \cI$ and $\cJ_0 \subset \cJ_1 \subset \dots \subset \cJ_{N-1} \subset \cJ$ 
yields a Lyapunov function
\begin{align}
\label{eqn:Vk-definition-monotone-proximal}
\begin{aligned}
    V_k & = -\sum_{(i,j) \in \cI_k} \lambda_{i,j} \inprod{\topa x_i - \topa x_j}{x_i - x_j} - \sum_{i\in \cJ_k} \alpha_i \sqnorm{\begin{bmatrix} x_0 & x_\star & \topa x_1 & \cdots & \topa x_N \end{bmatrix} \vs_i} \\
    & = -\sum_{(i,j) \in \cI_k} \lambda_{i,j} \Tr(\vG\vM^{(0)}_{i,j}) - \sum_{i\in\cJ_k} \alpha_i \Tr(\vG\vS_i) .
\end{aligned}
\end{align}
Indeed, by definition of $V_k$ and the fact that $\lambda_{i,j} \Tr(\vG\vM^{(0)}_{i,j}) \ge 0$ and $\alpha_i \Tr(\vG\vS_i) \ge 0$, we naturally have $0 \ge V_0 \ge V_1 \ge \dots \ge V_{N-1}$.
The final convergence proof will then follow from an additional last step:
\begin{align*}
    & \sqnorm{\topa x_N} - \nu \sqnorm{x_0 - x_\star} \\
    & = V_{N-1} - \sum_{(i,j) \in \cI\setminus \cI_{N-1}} \lambda_{i,j} \inprod{\topa x_i - \topa x_j}{x_i - x_j}  - \sum_{i\in \cJ \setminus \cJ_{N-1}} \alpha_i \sqnorm{\begin{bmatrix} x_0 & x_\star & \topa x_1 & \cdots & \topa x_N \end{bmatrix} \vs_i} \\
    & \le V_{N-1} \le 0 .
\end{align*}
Thus, in principle, any PEP-style proof can become a Lyapunov-style proof.
However, the resulting Lyapunov function may not be insightful or concise.
Therefore, a more important question is to clarify when the choice of $\cI_k$ and $\cJ_k$ will yield a ``nice'' Lyapunov function, which we address in the following Section~\ref{section:characterization-of-admissibility}.

\paragraph{Remark.}
The length of the towers of indices $\cI_k, \cJ_k$ may vary;
it typically follows the total number of iterations, and the final index is chosen to be $N-1$ in the proximal monotone inclusion setting described above because the performance measure $\sqnorm{\topa x_N} = \frac{1}{4}\sqnorm{y_{N-1} - \opT y_{N-1}}$ is associated with $y_{N-1}$, which can be viewed as the main iterate where the measure is evaluated.
For other settings, such as smooth convex minimization, the performance measure is typically associated with $x_N$, so the final index will be $N$.
It is also possible to consider extensions where Lyapunov function indices do not necessarily follow the iteration indices, but we focus on the most natural cases where they are in agreement.

\paragraph{Comparison with \cite{goujaudFundamentalProofStructures2023}.} We note that a similar approach of defining the Lyapunov function as a partial sum of nonpositive terms appearing in the PEP-style proof has been considered in \cite[Section~6]{goujaudFundamentalProofStructures2023} for smooth (strongly) convex minimization setting. 
Their formulation aggregates all terms depending on the quantities observed before the $k$-th iteration, and provided Nesterov's accelerated gradient (NAG) \cite{Nesterov1983_method} as an example.
We offer several layers of extension compared to their approach.
First, we consider a number of distinct problem settings and materially demonstrate the broad applicability of this technique.
Second, our definition of partial sums is not confined to the observations prior to a specific iteration number, which is a necessary generalization to handle algorithms with fixed horizon such as OGM-G \cite{KimFessler2021_optimizing}, Dual-OHM \cite{YoonKimSuhRyu2024_optimal} and Dual-OC-Halpern (Section~\ref{section:dual-oc-halpern}).
Third, we complement the partial sum construction with concrete linear algebraic criteria and implementable procedure for determining whether it can be simplified to produce a meaningful Lyapunov function, as detailed in the subsequent sections.

\paragraph{Example: APPM/OHM.}
As a representative example, consider the \emph{accelerated proximal point method} of \citep{Kim2021_accelerated}:
\begin{align}
\begin{aligned}
    x_{k+1} = \opJ_\opA (y_k) , \quad y_{k+1}  = x_{k+1} + \frac{k}{k+2} (x_{k+1} - x_k) - \frac{k}{k+2} (x_k - y_{k-1})
\end{aligned}
\label{alg:appm}
\tag{APPM}
\end{align}
with $x_0=y_0$ and $\topa x_{k+1}:=y_k-x_{k+1}\in\opA x_{k+1}$.
In the equivalent fixed-point form with $\opT=2\opJ_\opA-\opI$, this becomes the \emph{optimal Halpern method} \citep{Lieder2021_convergence,ParkRyu2022_exact}
\begin{align}
    y_{k+1} = \frac{1}{k+2} y_0 + \frac{k+1}{k+2} \opT y_k .
\label{alg:ohm}
\tag{OHM}
\end{align}
The PEP-style convergence proof for these equivalent algorithms \citep{Kim2021_accelerated,Lieder2021_convergence,ContrerasCominetti2023_optimal, RyuYin2022_largescale} takes the form
\begin{align*}
    \sqnorm{\topa x_N} - \frac{\sqnorm{y_0 - y_\star}}{N^2}
    &=
    -\sum_{j=1}^{N-1} 
    \lambda_{j,j+1}
    \inprod{\topa x_{j} - \topa x_{j+1}}{x_j - x_{j+1}}
    - \lambda_{N,\star} \inprod{\topa x_N}{x_N - y_\star}
    - \sqnorm{\topa x_N - \frac{1}{N}(y_0 - y_\star)} 
\end{align*}
where we specify $\lambda_{j,j+1}, \lambda_{N,\star} > 0$ later.
In particular, $\cI = \{(j,j+1)\,|\,j=1,\dots,N-1\} \cup \{(N,\star)\}$ and $|\cJ|=1$ with the corresponding unique square term being $\sqnorm{\topa x_N - \frac{1}{N}(y_0 - y_\star)}$.
A natural increasing set of inequality indices is to follow the chronological order and take:
\[
    \cI_k=\{(j,j+1)\,|\,j=1,\dots,k\},
    \quad
    \cJ_k=\emptyset
\]
for $k=1,\dots,N-1$, while we take $\cI_0 = \cJ_0 = \emptyset$.
This gives $V_0 = 0$, and
\begin{align}
\label{eqn:appm-ohm-partial-sum-lyapunov}
    V_k
    =
    -\sum_{j=1}^{k} \lambda_{j,j+1}
    \inprod{\topa x_{j} - \topa x_{j+1}}{x_j - x_{j+1}} ,
    \qquad
    k=1,\dots,N-1.
\end{align}
By monotonicity of $\opA$, we have $0 = V_0\ge V_1\ge \dots \ge V_{N-1}$.

\subsection{Characterization of admissible Lyapunov functions}
\label{section:characterization-of-admissibility}

In this section, we propose a precise characterization of Lyapunov-style proofs that can be perceived as nice and concise to human readers.
We start with the following simple proposition, translating the behavior of $V_k$ into linear algebraic properties.
Its simple proof is presented in Appendix~\ref{appendix:lyapunov-proposition-proof}.

\begin{proposition}
\label{proposition:linear-algebra-of-lyapunov}
Let $\opA\colon \reals^d \rightrightarrows \reals^d$ be a maximally monotone operator.
Let $x_k$, $y_k$ be the iterates of an algorithm defined by~\eqref{eqn:H-matrix-algorithm-monotone-proximal},
and let $V_k$ be the Lyapunov function defined by \eqref{eqn:Vk-definition-monotone-proximal}.
Then, $V_k = \Tr(\vG\vV_k)$, where 
\[
    \vV_k = -\sum_{(i,j) \in \cI_k} \lambda_{i,j} \vM_{i,j}^{(0)} - \sum_{i\in \cJ_k} \alpha_i \vS_i \in \mathbb{S}^{N+2} .
\]
Furthermore, let $r_k = \mathrm{rank}(\vV_k)$ and $\cR(\vV_k)$ be the range of $\vV_k$.
Then, for any basis $\{\vv_{k,1}, \dots, \vv_{k,r_k}\}$ of $\cR(\vV_k)$, there exists $\vA_k \in \mathbb{S}^{r_k}$ such that
\begin{align}
\label{eqn:Lyapunov-expression-matrix-decomposition}
    \vV_k = \begin{bmatrix} \vv_{k,1} & \cdots & \vv_{k,r_k} \end{bmatrix} \vA_k \begin{bmatrix} \vv_{k,1} & \cdots & \vv_{k,r_k} \end{bmatrix}^\transpose ,
\end{align}
so $V_k$ has the vector quadratic form expression
\begin{align}
\label{eqn:Lyapunov-expression-quadratic-form}
    V_k = \sum_{p,q=1}^{r_k} (\vA_k)_{p,q} \inprod{\vP\vv_{k,p}}{\vP\vv_{k,q}} .
\end{align}
\end{proposition}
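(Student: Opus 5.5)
The plan is to verify the three claims in turn; each is elementary linear algebra.

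First, the trace identity $V_k = \Tr(\vG\vV_k)$ follows from linearity of the trace together with the second line of \eqref{eqn:Vk-definition-monotone-proximal}. The one fact to check is that each scalar term equals a trace against $\vG$. For the inequality terms, use $\vP\vx_i = x_i$, $\vP\vg_i = \topa x_i$ (with $\vg_\star = 0$) and the identity $\inprod{\vP\va}{\vP\vb} = \va^\transpose\vG\vb = \Tr(\vG\,\mathrm{Sym}(\vb\va^\transpose))$. These give $\inprod{\topa x_i - \topa x_j}{x_i - x_j} = \Tr(\vG\vM^{(0)}_{i,j})$. Similarly, for the squared terms, $\sqnorm{\vP\vs_i} = \vs_i^\transpose\vG\vs_i = \Tr(\vG\vS_i)$. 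Summing with the weights $-\lambda_{i,j}$ and $-\alpha_i$ then gives $V_k = \Tr(\vG\vV_k)$. Symmetry of $\vV_k$ is immediate, since each $\vM^{(0)}_{i,j}$ and each $\vS_i$ is symmetric.

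Second, for the factorization \eqref{eqn:Lyapunov-expression-matrix-decomposition}, let $\vB_k = [\vv_{k,1}\ \cdots\ \vv_{k,r_k}] \in \reals^{(N+2)\times r_k}$, which has full column rank. Then $\vB_k$ has a left inverse, for instance $\vB_k^\dagger = (\vB_k^\transpose\vB_k)^{-1}\vB_k^\transpose$, and $\vB_k\vB_k^\dagger$ is the orthogonal projector $\Pi_k$ onto $\cR(\vV_k)$. Because $\vV_k$ is symmetric, $\cR(\vV_k)^\perp = \cN(\vV_k)$. Hence $\Pi_k\vV_k = \vV_k$, and taking transposes, $\vV_k\Pi_k = \vV_k$. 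Setting
\[
\vA_k := \vB_k^\dagger \vV_k (\vB_k^\dagger)^\transpose \in \mathbb{S}^{r_k},
\]
we get $\vB_k\vA_k\vB_k^\transpose = \Pi_k\vV_k\Pi_k = \vV_k$. The matrix $\vA_k$ is symmetric because $\vV_k$ is.

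Third, the quadratic-form expression \eqref{eqn:Lyapunov-expression-quadratic-form} follows by substituting the factorization into the trace:
\[
\Tr(\vG\vB_k\vA_k\vB_k^\transpose) = \Tr(\vA_k\,\vB_k^\transpose\vG\vB_k) = \sum_{p,q}(\vA_k)_{p,q}\,\vv_{k,p}^\transpose\vP^\transpose\vP\vv_{k,q}.
\]
Each summand is $(\vA_k)_{p,q}\inprod{\vP\vv_{k,p}}{\vP\vv_{k,q}}$, which is the claimed form.

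I do not expect any real obstacle. The only step requiring care is justifying $\Pi_k\vV_k\Pi_k = \vV_k$, which is where symmetry of $\vV_k$ is essential (range orthogonal to null space). I would also note that $\vA_k$ is uniquely determined by the chosen basis, since $\vB_k$ is injective.
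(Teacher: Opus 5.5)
Your proposal is correct and matches the paper's proof essentially step for step. Both use the projector $\boldsymbol{\Pi}_k = \vB_k(\vB_k^\transpose\vB_k)^{-1}\vB_k^\transpose$, the identity $\vV_k=\boldsymbol{\Pi}_k\vV_k\boldsymbol{\Pi}_k$ (where symmetry gives $\vV_k\boldsymbol{\Pi}_k=\vV_k$), the same $\vA_k$, and the same cyclic-trace computation; the paper also handles the trivial case $r_k=0$ separately, which you may want to mention for completeness.
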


Given the above characterization, we set up the criteria for \emph{admissible} Lyapunov functions as follows:
\begin{itemize}
    \item \textbf{Consistency and conciseness.} This requires that the number of vectors needed to express $V_k$ should not increase with $k$ and stay virtually constant across $k=0,\dots,N-1$.
    Concretely, we require that $r_k = r$ for all $k=1,\dots,N-2$ for some fixed $r$.
    Note that we allow exceptions at the indices $k=0$ and $k=N-1$, where $V_k$ often gets simplified than they are at generic iteration numbers and the rank is reduced.
    We additionally require that $r\le 4$, as we find that this is the largest rank hit by the most complex Lyapunov analyses covered in this work.

    \item \textbf{Sufficiency.} Establishing the nonincreasingness property $V_{N-1} \le \dots \le V_0$ should essentially complete all the heavy lifting, and the remaining last step needed to derive the final convergence rate should be minimal.
    We quantify this with the condition $|\cI \setminus \cI_{N-1}| \le 1$ and $|\cJ \setminus \cJ_{N-1}| \le 1$, i.e., the last step should use at most one problem-specific inequality and one norm-square nonnegativity argument, such as Young's inequality.
    
    \item \textbf{Locality.} This requires that $V_k$ should be expressible in terms of the local quantities, such as the terms involving the local iterates $x_k, y_k$ or $x_{k\pm 1}, y_{k\pm 1}$.
    If necessary, one may introduce auxiliary sequences for the purpose of Lyapunov analysis, which we typically denote as $z_k$.
    Additionally, $V_k$ may involve some fixed iteration indices not depending on $k$ (typically $0, N$ or $\star$).
    In linear algebraic terms, there should be a basis $\{\vv_{k,1},\dots,\vv_{k,r}\}$ of $\cR(\vV_k)$ that is locally expressible, e.g., $\vv_{k,j} \in \mathrm{span}\left\{ \vx_0, \vx_k, \vx_N, \vx_\star, \vg_k, \vg_N \right\}$ for $j=1,\dots,r$.
    However, we keep $\vv_{k,j}$ flexible rather than restricting them to a limited format, and provide a practical procedure for searching these local basis vectors.
\end{itemize}

\paragraph{Example: APPM/OHM continued.}
For the partial sums $V_k$ in \eqref{eqn:appm-ohm-partial-sum-lyapunov}, the corresponding matrices $\vV_k$ have rank $2$ for generic $k$, regardless of $N$.
Hence, $V_k$ satisfies consistency and conciseness.
Furthermore, it satisfies the sufficiency property because $\cI\setminus\cI_{N-1} = \{(N,\star)\}$ and $\cJ\setminus\cJ_{N-1}=\cJ$ are both singletons.
Due to this property, the convergence proof is almost complete once $V_{N-1} \le 0$ is established; the remaining steps are as simple as
\begin{align}
\label{eqn:appm-ohm-last-step}
    0 \ge V_{N-1}
    \ge V_{N-1}
    - \lambda_{N,\star} \inprod{\topa x_N}{x_N-x_\star}
    - \sqnorm{\topa x_N-\frac{1}{N}(y_0-x_\star)} =
    \sqnorm{\topa x_N}
    -\frac{1}{N^2}\sqnorm{y_0-x_\star}.
\end{align}
Finally, it turns out that
\begin{align*}
    \{\vg_{k+1},\; \vy_0-\vx_{k+1}\} \subset \cR(\vV_k)
\end{align*}
is a basis of $\cR(\vV_k)$, which verifies the locality property. 
Then, by \cref{proposition:linear-algebra-of-lyapunov}, 
\begin{align}
\label{eqn:appm-ohm-lyapunov-template}
    V_k = a_k\sqnorm{\topa x_{k+1}} + b_k\inprod{\topa x_{k+1}}{y_0-x_{k+1}} + c_k\sqnorm{y_0-x_{k+1}} 
\end{align}
for some $a_k, b_k, c_k \in \reals$.

\paragraph{Remark.} In the setting where the objective functions' values are involved (such as convex minimization), $V_k$ will additionally involve the term of the form $\vf^\transpose \vb_k$.
In this case, we also require that $\vb_k$ is sparse and local, which can be checked straightforwardly without any linear algebraic tricks.

\paragraph{Cases with $|\cJ|>1$.} The APPM/OHM example shown above has $|\cJ|=1$ so we can take $\cJ_k = \emptyset$, which simplifies the analysis.
However, when $|\cJ|>1$, i.e., if more than one norm square terms are involved, one needs a systematic decomposition of the positive semidefinite matrix certificate 
$\vS = \sum_{i\in \cJ} \alpha_i \vs_i\vs_i^\transpose$, which is not unique.
In some cases, this can be done purely numerically, as in the case of gradient descent (presented in Section~\ref{section:gd}), 
while sometimes we use a known analytic decomposition from prior work, as in the case of proximal point method for monotone inclusion (presented in Section~\ref{section:ppm}).
We explain the details in those sections while we explore the corresponding examples.

\subsection{Procedure for Lyapunov conversion and analytic proof discovery}
\label{section:Lyapunov-conversion-procedure}

Based on our characterization from the previous section, we propose the procedure that determines admissibility of a Lyapunov function proposed via $\cI_k, \cJ_k$, 
and when that is the case, converts the PEP-style proof into the Lyapunov-style proof.
Here, we emphasize that with fixed $N$, the search of proof certificates via PEP is numerical, and all steps below, except for the last one, can be performed numerically based on those certificates.
In the last step, we assume that the analytic forms of the certificates are known, and use them to assemble the Lyapunov-style proof analytically.

\begin{enumerate}
    \item The user proposes a candidate $\cI_0 \subset \cI_1 \subset \dots \subset \cI_{N-1} \subset \cI$ and $\cJ_0 \subset \cJ_1 \subset \dots \subset \cJ_{N-1} \subset \cJ$.

    \item Compute $\vV_k$ for $k=0,\dots,N-1$ and verify rank consistency, conciseness and sufficiency.

    \item To verify locality, we need to find a basis of $\cR(\vV_k)$.
    We collect all numerical coordinate vectors corresponding to the algorithm-specific quantities.
    In case of APPM/OHM, an adequate choice is
    \begin{align}
    \label{eqn:special-vector-construction}
        \cT = \{\vy_0, \dots, \vy_{N-1}, \vx_\star, \vx_1, \dots, \vx_N, \vg_1, \dots, \vg_N \} , \quad \cS = \cT \cup \{\vu - \vv \,|\, \vu, \vv \in \cT \} .
    \end{align}
    For other algorithms, we still include into $\cT$ all algorithm iterates, operator/gradient evaluated there, and the optimum (so all columns of $\vP$ are included),
    but we may additionally include auxiliary vectors that are specially named, i.e., \emph{tagged}.
    Then we construct the set $\cS$ by augmenting $\cT$ with all pairwise differences of vectors in it.
    
    \item Check if $\cS$ contains a basis of $\cR(\vV_k)$. 
    \begin{itemize}
        \item[4-1.] When this is the case, given any basis $\{\vv_{k,1},\dots,\vv_{k,r}\} \subseteq \cS$ of $\cR(\vV_k)$, we can decompose $\vV_k$ as in \eqref{eqn:Lyapunov-expression-matrix-decomposition}.
        Among all bases within $\cS$, choose one that yields the sparsest coefficient matrix $\vA_k$.
        Verify that it is expressed with respect to local information (locality), and if possible, choose the bases for each $k$ so that it follows a consistent symbolic pattern.
        
        \item[4-2.] Otherwise, count the maximum number of linearly independent vectors in $\cS \cap \cR(\vV_k)$.
        When this is $r-1$, then we can numerically complete a basis of $\cR(\vV_k)$, defining an auxiliary sequence $\vz_k$ needed to perform the Lyapunov analysis.
        We outline this procedure in Section~\ref{section:basis-completion}.

        \item[4-3.] When there are at most $r-2$ linearly independent vectors in $\cS \cap \cR(\vV_k)$, 
        then iteration locality cannot be verified with our current procedure; hence abort. 
        This might be possible with more sophisticated linear algebraic tricks, but we do not pursue this extension for the current work.
    \end{itemize} 
    
    \item This final step requires the analytic characterization of proof certificates $\lambda_{i,j}, \alpha_i$ and $\vs_i$.
    We take the identity
    \begin{align*}
        V_k - V_{k+1} = \sum_{(i,j) \in \cI_{k+1}\setminus \cI_k} \lambda_{i,j} \inprod{\topa x_i - \topa x_j}{x_i - x_j}  
        + \sum_{i\in \cJ_{k+1} \setminus \cJ_k} \alpha_i \sqnorm{\begin{bmatrix} x_0 & x_\star & \topa x_1 & \cdots & \topa x_N \end{bmatrix} \vs_i}
    \end{align*}
    where $V_k$ and $V_{k+1}$ are expressed using the formula~\eqref{eqn:Lyapunov-expression-quadratic-form}.
    This should be a formal identity, which holds for any values of $x_0$ and $\topa x_i$.
    That is, once we plug in the algorithm's update rule and express both sides in vector quadratic forms, coefficients of all inner product terms among $x_0$, $\topa x_1, \dots, \topa x_N$ from both sides should be equal.
    Viewing that as a system of equations and solving it for Lyapunov coefficients $(\vA_k)_{p,q}$, we can determine analytic expressions for $V_k$.
    Note that this verification can be performed \emph{symbolically}, which establishes the Lyapunov analysis for any $N$ and $0\le k\le N-1$;
    therefore, at this point, the procedure has produced an \emph{analytic (not merely numerical)} convergence result.
\end{enumerate}

\paragraph{Remark.} 
We note that Step~3 above where we construct a set of vectors $\cS$ is a heuristic, and we have no general guarantee on how many candidate basis vectors $\cS \cap \cR(\vV_k)$ it will identify;
this is why we specify in Step 4-3 that we cannot complete the procedure if we detect less than $r-1$ of them.
In other words, the outcome of Steps~3 and 4 depends on whether $\cS$ provides a search space that is broad enough to reveal the structure of $V_k$.
Nevertheless, in all 12 examples that we provide throughout Sections~\ref{section:existing-lyapunov} and \ref{section:new-lyapunov}, these steps are executed successfully.

\paragraph{Example: APPM/OHM continued.}
For APPM/OHM, the Steps~1--4, performed numerically, easily identify the Lyapunov skeleton \eqref{eqn:appm-ohm-lyapunov-template}, and in fact $c_k = 0$.
Then in Step~5, we use the analytic formula $\lambda_{k+1,k+2} = \frac{2(k+1)(k+2)}{N^2}$ and solve the equation
\begin{align*}
    V_k-V_{k+1}
    & = a_k\sqnorm{\topa x_{k+1}} + b_k\inprod{\topa x_{k+1}}{y_0-x_{k+1}} 
    - a_{k+1}\sqnorm{\topa x_{k+2}} - b_{k+1}\inprod{\topa x_{k+2}}{y_0-x_{k+2}} \\
    & = \frac{2(k+1)(k+2)}{N^2} \inprod{\topa x_{k+1}-\topa x_{k+2}}{x_{k+1}-x_{k+2}} 
\end{align*}
for $a_k, b_k, a_{k+1}$ and $b_{k+1}$.
This reveals
\[
    a_k=\frac{2(k+1)^2}{N^2}, \quad b_k=-\frac{2(k+1)}{N^2}, \quad a_{k+1}=\frac{2(k+2)^2}{N^2} , \quad b_{k+1}=-\frac{2(k+2)}{N^2} 
\]
and we conclude that APPM/OHM has the concise Lyapunov function
\begin{align}
\label{eqn:appm-ohm-closed-form-lyapunov}
    V_k = \frac{2(k+1)^2}{N^2} \sqnorm{\topa x_{k+1}} - \frac{2(k+1)}{N^2}\inprod{\topa x_{k+1}}{y_0-x_{k+1}}
\end{align}
for all $k=0,1,\dots,N-1$, and establishing this essentially completes the convergence proof.

\subsection{Basis completion}
\label{section:basis-completion} 

Suppose we have found linearly independent basis candidate vectors $\vv_{k,1}, \dots, \vv_{k,r-1} \in \cS \cap \cR(\vV_k)$, where $\mathrm{rank}(\vV_k) = r$.
Then, take any $\vw_{k,r}$ satisfying $\cR(\vV_k) = \mathrm{span}\{\vv_{k,1}, \dots, \vv_{k,r-1}, \vw_{k,r}\}$, so that we can write
\begin{align*}
    \vV_k = \begin{bmatrix} \vv_{k,1} & \cdots & \vv_{k,r-1} & \vw_{k,r} \end{bmatrix} \Tilde{\vA}_k \begin{bmatrix} \vv_{k,1} & \cdots & \vv_{k,r-1} & \vw_{k,r} \end{bmatrix}^{\,\transpose}
\end{align*}
for some $\Tilde{\vA}_k \in \mathbb{S}^r$, by \cref{proposition:linear-algebra-of-lyapunov}.
Then write
\begingroup
\setlength{\arraycolsep}{8pt}
\renewcommand{\arraystretch}{1.2}
\begin{align*}
    \Tilde{\vA}_k = \begin{bmatrix}
    \begin{array}{c|c}
        \vB & \vu \\
        \hline
        \vu^\transpose & \tilde{c}
    \end{array}
    \end{bmatrix}
\end{align*}
\endgroup
If $\tilde{c} = 0$, then $\vu$ stays invariant up to a scalar multiple regardless of how $\vw_{k,r}$ is chosen, so it cannot be essentially simplified further.
Otherwise, we can rewrite $\tilde{\vA}_k$ as
\begingroup
\setlength{\arraycolsep}{8pt}
\renewcommand{\arraystretch}{1.25}
\[
    \tilde{\vA}_k = \begin{bmatrix}
        \begin{array}{c|c}
        \vI & \frac{\mathrm{sign}(\tilde{c})}{\sqrt{|\tilde{c}|}}\vu \\
        \hline
        \mathbf{0}^\transpose & \sqrt{|\tilde{c}|}
    \end{array}
    \end{bmatrix}
    \begin{bmatrix}
    \begin{array}{c|c}
        \vB - \frac{1}{\tilde{c}}\vu\vu^\transpose & \mathbf{0} \\
        \hline
        \mathbf{0}^\transpose & \mathrm{sign}(\tilde{c})
    \end{array}
    \end{bmatrix}
    \begin{bmatrix}
        \begin{array}{c|c}
        \vI & \mathbf{0} \\
        \hline
        \frac{\mathrm{sign}(\tilde{c})}{\sqrt{|\tilde{c}|}}\vu^\transpose & \sqrt{|\tilde{c}|}
    \end{array}
    \end{bmatrix} .
\]
\endgroup
Hence the coefficient matrix can be simplified with the basis change 
$\vv_{k,r} = \frac{\mathrm{sign}(\tilde{c})}{\sqrt{|\tilde{c}|}} \begin{bmatrix}
    \vv_{k,1} & \cdots & \vv_{k,r-1}
\end{bmatrix} \vu + \sqrt{|\tilde{c}|} \vw_{k,r}$:
\begingroup
\setlength{\arraycolsep}{8pt}
\renewcommand{\arraystretch}{1.2}
\begin{align*}
    \vV_k = \begin{bmatrix} \vv_{k,1} & \cdots & \vv_{k,r} \end{bmatrix} \vA_k \begin{bmatrix} \vv_{k,1} & \cdots & \vv_{k,r} \end{bmatrix}^{\,\transpose} ,
     \quad \vA_k = \begin{bmatrix}
    \begin{array}{c|c}
        \vB - \frac{1}{\tilde{c}}\vu\vu^\transpose & \mathbf{0} \\
        \hline
        \mathbf{0}^\transpose & \mathrm{sign}(\tilde{c})
    \end{array}
    \end{bmatrix} .
\end{align*}
\endgroup
In this way, we identify an auxiliary sequence $\vv_{k,r} = \vz_k$ that represents $V_k$ with maximally sparse coefficient matrix $\vA_k$,
provided that $\vv_{k,1}, \dots, \vv_{k,r-1}$ are fixed.

%% file: figures/lyapunov_conversion_workflow.tex
\begin{figure}[H]
\centering
\small
\begingroup
\renewcommand{\arraystretch}{1}
\setlength{\tabcolsep}{5pt}
\setlength{\jot}{1pt}

\def\lyapcellvpad{3pt}
\def\lyapheadheight{4.6ex}

\def\headcell#1#2{%
\begin{minipage}[c][\lyapheadheight][c]{#1}
\centering\bfseries #2
\end{minipage}%
}

\def\stepcell#1{%
\begin{minipage}[c]{0.07\textwidth}
\vspace*{\lyapcellvpad}
\centering #1\par
\vspace*{\lyapcellvpad}
\end{minipage}%
}

\def\taskcell#1{%
\begin{minipage}[c]{0.255\textwidth}
\vspace*{\lyapcellvpad}
\raggedright #1\par
\vspace*{\lyapcellvpad}
\end{minipage}%
}

\def\outcell#1{%
\begin{minipage}[c]{0.505\textwidth}
\vspace*{\lyapcellvpad}
\centering #1\par
\vspace*{\lyapcellvpad}
\end{minipage}%
}

\noindent\resizebox{0.99\textwidth}{!}{%
\begin{tabular}{c|c|c}
\hline
\headcell{0.07\textwidth}{Step}
&
\headcell{0.255\textwidth}{High-level task}
&
\headcell{0.505\textwidth}{Example outcome for APPM/OHM}
\\
\hline

\stepcell{1}
&
\taskcell{Define \(V_k\) as partial sums of nonpositive terms from the PEP-style proof}
&
\outcell{%
\(
\begin{aligned}
V_k
&=
-\sum_{j=1}^{k}\lambda_{j,j+1}
\inprod{\topa x_j-\topa x_{j+1}}{x_j-x_{j+1}} \\
\vV_k
&=
-\sum_{j=1}^{k}\lambda_{j,j+1}
\mathrm{Sym}\left(
(\vg_j-\vg_{j+1})(\vx_j-\vx_{j+1})^\transpose
\right)
\end{aligned}
\)
}
\\
\hline

\stepcell{2}
&
\taskcell{Check rank consistency, conciseness, and sufficiency}
&
\outcell{%
\(
\mathrm{rank}(\vV_k)=2 \text{ for all } k,\quad
|\cI\setminus\cI_{N-1}|=
|\cJ\setminus\cJ_{N-1}|=1
\)
}
\\
\hline

\stepcell{3}
&
\taskcell{Find local basis candidate vectors}
&
\outcell{%
\(
\vx_{k+1},\; \vy_0-\vx_{k+1}
\in \cR(\vV_k)
\)
}
\\
\hline

\stepcell{4}
&
\taskcell{Express the Lyapunov skeleton using local basis vectors}
&
\outcell{%
\(
\begin{aligned}
V_k
&=
a_k\sqnorm{\topa x_{k+1}}
+b_k\inprod{\topa x_{k+1}}{y_0-x_{k+1}}
+c_k\sqnorm{y_0-x_{k+1}}
\end{aligned}
\)
}
\\
\hline

\stepcell{5}
&
\taskcell{Solve symbolic equations to obtain closed-form coefficients}
&
\outcell{%
\(
\begin{aligned}
& V_k - V_{k+1}
=
\lambda_{k+1,k+2}
\inprod{\topa x_{k+1}-\topa x_{k+2}}{x_{k+1}-x_{k+2}}
\\
& \implies
a_k = \frac{2(k+1)^2}{N^2},
\quad
b_k = -\frac{2(k+1)}{N^2},
\quad
c_k = 0
\\
& \boxed{
V_k =
\frac{2(k+1)^2}{N^2}\sqnorm{\topa x_{k+1}}
-\frac{2(k+1)}{N^2}
\inprod{\topa x_{k+1}}{y_0-x_{k+1}}
}
\end{aligned}
\)
}
\\
\hline

\end{tabular}%
}
\endgroup
\caption[Workflow for Lyapunov conversion]{The proposed procedure for converting a PEP-style proof into a Lyapunov-style proof. In the rightmost column, we display the example outcomes from each step in the case of the APPM/OHM algorithm. Detailed explanation of all steps are provided throughout \Cref{section:theory}.}
\label{fig:lyapunov-conversion-workflow}
\end{figure}

%% file: sections/4_existing_lyapunov_analyses.tex
\section{Recovering existing Lyapunov analyses via unified framework}
\label{section:existing-lyapunov}

In this section, we apply our framework to algorithms whose Lyapunov-style proofs exist in the literature,
and show that we can rediscover their Lyapunov analysis from scratch.
This demonstrates the broad applicability of our characterization and procedure of Section~\ref{section:Lyapunov-conversion-procedure}.
The recovered Lyapunov functions are sometimes characterized in forms distinct from what prior work reported, providing original insights for novel extensions.
For each algorithm, we provide an associated \texttt{Jupyter notebook} outlining the execution of our procedure.

\subsection{Proximal monotone inclusion/Fixed-point problem}
\label{section:recovering-proximal-monotone}

We consider the setting provided in Section~\ref{section:setting-proximal-monotone-inclusion}, where we solve monotone inclusion problems 
with respect to maximally (strongly) monotone $\opA\colon \reals^d \rightrightarrows \reals^d$ that has at least one zero.
This includes the case of APPM/OHM, covered in the previous section as a representative example.
We quickly state the final result for APPM/OHM and provide two additional examples: Dual-OHM \citep{YoonKimSuhRyu2024_optimal} and OC-Halpern \citep{ParkRyu2022_exact} algorithms.

\subsubsection{Accelerated proximal point method \& Optimal Halpern method}
\label{section:appm-ohm}

\begin{proposition}
Let $\opA\colon\reals^d\rightrightarrows\reals^d$ be maximally monotone.
Let $x_\star$ satisfy $0\in\opA x_\star$, set $y_\star=x_\star$, and let $x_0 = y_0 \in \reals^d$ be an initial point.
Let $x_k, y_k$ be iterates generated from \eqref{alg:appm}, or equivalently from \eqref{alg:ohm} with $\opT = 2\opJ_\opA - \opI$, and define $V_k$ by \eqref{eqn:appm-ohm-closed-form-lyapunov}.
Then, for $k=1,\dots,N-2$,
\begin{align}
\label{eqn:appm-ogm-consecutive-difference}
    V_k-V_{k+1} = \frac{2(k+1)(k+2)}{N^2} \inprod{\topa x_{k+1}-\topa x_{k+2}}{x_{k+1}-x_{k+2}} .
\end{align}
Consequently, $V_{N-1}\le\dots\le V_1\le 0$, and this implies $\sqnorm{\topa x_N}\le \frac{1}{N^2}\sqnorm{y_0-y_\star}$.
\end{proposition}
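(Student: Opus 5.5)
The plan is to verify \eqref{eqn:appm-ogm-consecutive-difference} as a formal identity in the vectors $y_0,\topa x_1,\dots,\topa x_N$. Then I telescope it and finish with the last step \eqref{eqn:appm-ohm-last-step}. I will work in the OHM form, abbreviating $g_j=\topa x_j = y_{j-1}-x_j$ and $u_j = y_0 - x_j$. Since $\opT y_k = 2\opJ_\opA y_k - y_k = 2x_{k+1}-y_k = x_{k+1}-g_{k+1}$, the update \eqref{alg:ohm} reads
\[
y_{k+1} = \tfrac{1}{k+2}y_0 + \tfrac{k+1}{k+2}\left(x_{k+1}-g_{k+1}\right).
\]
Combining this with $x_{k+2}=y_{k+1}-g_{k+2}$ gives the one-step recursion
\[
(k+2)\,u_{k+2} = (k+1)\left(u_{k+1}+g_{k+1}\right) + (k+2)\,g_{k+2},
\]
together with $x_{k+1}-x_{k+2} = u_{k+2}-u_{k+1}$. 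This recursion is the only place the algorithm enters.

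Next I multiply \eqref{eqn:appm-ogm-consecutive-difference} by $N^2/2$. Writing $g_1=g_{k+1}$, $g_2=g_{k+2}$ and similarly for $u$, the claim becomes
\[
(k+1)^2\|g_1\|^2-(k+1)\langle g_1,u_1\rangle-(k+2)^2\|g_2\|^2+(k+2)\langle g_2,u_2\rangle=(k+1)(k+2)\langle g_1-g_2,\,u_2-u_1\rangle .
\]
I eliminate $u_2$ using the recursion. On the left I substitute $(k+2)u_2$ directly. On the right I write $(k+2)(u_2-u_1) = -u_1+(k+1)g_1+(k+2)g_2$. Both sides are then quadratic forms in $(u_1,g_1,g_2)$, and I compare coefficients of $\|g_1\|^2$, $\|g_2\|^2$, $\langle g_1,g_2\rangle$, $\langle g_1,u_1\rangle$ and $\langle g_2,u_1\rangle$. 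For example, $\|g_2\|^2$ has coefficient $-(k+2)^2+(k+2)=-(k+1)(k+2)$ on both sides, and $\langle g_1,g_2\rangle$ has coefficient $k+1$ on both sides. This coefficient matching is the main computational step. It is routine but sign-sensitive, so I would double-check it symbolically, in the spirit of Step~5. The identity in fact holds for every $k\ge 0$.

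For the consequences, monotonicity of $\opA$ makes the right-hand side of \eqref{eqn:appm-ogm-consecutive-difference} nonnegative, so $V_{N-1}\le\dots\le V_1$. To start the chain I check $V_0$ directly: $y_0-x_1=g_1$, so $V_0=\frac{2}{N^2}(\|g_1\|^2-\|g_1\|^2)=0$. Applying the identity at $k=0$ then gives $V_1\le V_0=0$. Finally I carry out \eqref{eqn:appm-ohm-last-step} with $\lambda_{N,\star}=2/N$:
\[
V_{N-1}-\tfrac{2}{N}\langle g_N,x_N-x_\star\rangle=2\|g_N\|^2-\tfrac{2}{N}\langle g_N,y_0-x_\star\rangle .
\]
Subtracting $\|g_N-\frac1N(y_0-x_\star)\|^2$ then leaves exactly $\|g_N\|^2-\frac{1}{N^2}\|y_0-y_\star\|^2$. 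The subtracted terms are nonnegative, by monotonicity between $x_N$ and $x_\star$ and by being a square, so $\|g_N\|^2-\frac{1}{N^2}\|y_0-y_\star\|^2\le V_{N-1}\le 0$, which is the claimed bound.
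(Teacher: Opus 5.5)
Your proof is correct and takes the same route as the paper: establish the consecutive-difference identity, telescope it, and finish with the terminal step using $\lambda_{N,\star}=2/N$ and the square $\|g_N-\frac{1}{N}(y_0-x_\star)\|^2$. The differences are that you do the coefficient matching by hand through the recursion $(k+2)u_{k+2}=(k+1)(u_{k+1}+g_{k+1})+(k+2)g_{k+2}$ (all five coefficients do agree), whereas the paper delegates it to \texttt{sympy}, and that you use $V_0=0$ together with the $k=0$ case of the identity to get $V_1\le 0$, a step the paper's stated range $k=1,\dots,N-2$ leaves implicit.
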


\begin{proof}
The identity \eqref{eqn:appm-ogm-consecutive-difference} is verified in \texttt{examples\_lyapunov/appm/appm\_example\_lyap.ipynb} within the provided codebase 
by establishing a symbolic matrix identity, which is 
generated through \texttt{PEPFlow} and then passed to the \texttt{Python} package \texttt{sympy} for exact symbolic verification. 
Once this is established, we have $V_{N-1} = 2\sqnorm{\topa x_N} - \frac{2}{N}\inprod{\topa x_N}{y_0 - x_N} \le 0$, and we can directly verify that \eqref{eqn:appm-ohm-last-step} holds with $\lambda_{N,\star} = \frac{2}{N}$ as
\begin{align*}
    0 & \ge V_{N-1} - \frac{2}{N}\inprod{\topa x_N}{x_N-x_\star} - \sqnorm{\topa x_N-\frac{1}{N}(y_0-x_\star)} \\
    & = 2\sqnorm{\topa x_N} - \frac{2}{N} \inprod{\topa x_N}{y_0 - x_N} - \frac{2}{N}\inprod{\topa x_N}{x_N-x_\star} - \sqnorm{\topa x_N-\frac{1}{N}(y_0-x_\star)} \\
    & = \sqnorm{\topa x_N} - \frac{1}{N^2} \sqnorm{y_0 - x_\star}
\end{align*}
which proves the desired convergence guarantee.
\end{proof}

\paragraph{Remark.}
Although it is not the central emphasis of this work, APPM/OHM is an exact optimal algorithm that matches the complexity lower bound even without any constant factor gap for this problem setup \citep{ParkRyu2022_exact}.
This Lyapunov function for OHM, to the best of our knowledge, first appeared in \citep{Diakonikolas2020_halpern} and similar forms were used extensively in subsequent work on Halpern-type acceleration for minimax optimization \citep{YoonRyu2021_accelerated,LeeKim2021_fast,Tran-DinhLuo2021_halperntype,RyuYin2022_largescale,yoonAcceleratedMinimaxAlgorithms2025}.

\subsubsection{Dual optimal Halpern method}
\label{section:dual-ohm}

Next, consider a nonexpansive operator $\opT\colon \reals^d \to \reals^d$ and the \emph{dual optimal Halpern method} \citep{YoonKimSuhRyu2024_optimal}
\begin{align}
    y_{k+1} = y_k + \frac{N-k-1}{N-k} (\opT y_k - \opT y_{k-1}) 
\label{alg:dual-ohm}
\tag{Dual-OHM}
\end{align}
where $k=0,\dots,N-1$, and $\opT y_{-1} = y_0$. 
The corresponding algorithm for monotone inclusion uses the same update rule with $\opT = 2\opJ_\opA - \opI$, and we let $x_k = \opJ_\opA y_{k-1}$ and $\topa x_k = y_{k-1} - x_k \in \opA x_k$ for $k=1,\dots,N$.
This algorithm was shown to converge with the rate $\sqnorm{y_{N-1} - \opT y_{N-1}} \le \frac{4\sqnorm{y_0 - y_\star}}{N^2}$,
and equivalently $\sqnorm{\topa x_N} \le \frac{\sqnorm{y_0 - y_\star}}{N^2}$, achieving the same exact optimal last-iterate guarantee with APPM/OHM.

The PEP-style proof of this convergence takes the form
\begin{align*}
    \sqnorm{\topa  x_N} - \frac{\sqnorm{y_0 - y_\star}}{N^2} = -\sum_{k=1}^{N-1} \lambda_{k,N} \inprod{\topa x_{k} - \topa x_N}{x_k - x_N} - \lambda_{N,\star} \inprod{\topa x_N}{x_N - x_\star} - \sqnorm{\topa x_N - \frac{1}{N}(y_0 - y_\star)} 
\end{align*}
where $\lambda_{k,N}, \lambda_{N,\star} \ge 0$ are specified later.
Note that we have $\cI = \{(k,N)\,|\,k=1,\dots,N-1\} \cup \{(N,\star)\}$ and $|\cJ| = 1$. 

\paragraph{Step 1.}
As a natural choice, take $\cI_0 = \cJ_0 = \emptyset$ and for $k=1,\dots,N-1$,
\begin{align*}
    \cI_k = \{(j,N)\,|\,j=1,\dots,k\} , \quad \cJ_k = \emptyset .
\end{align*}

\paragraph{Step 2.}
This choice satisfies sufficiency, as $|\cI\setminus \cI_{N-1}| = 1 = |\cJ\setminus \cJ_{N-1}|$.
Numerical computation of $\vV_k$ reveals that it has an almost constant rank of $4$, with the exceptions of $k=1$ and $k=N-1$, where the rank becomes 2.

\paragraph{Step 3.}
We construct the set of special vectors $\cS$ as
\begin{align*}
    \cT = \{\vy_0, \dots, \vy_{N-1}, \vy_\star, \vx_1, \dots, \vx_N, \vg_1 , \dots , \vg_N, \vt_0 , \dots, \vt_{N-1} \} , \quad \cS = \cT \cup \{\vu - \vv \,|\, \vu, \vv \in \cT \}
\end{align*}
where $\vy_{i-1}, \vx_i, \vg_i$ and $\vt_{i-1}$ are respectively PEP context vectors representing the iterates $y_{i-1}, x_i, \topa x_i$ and $\opT y_{i-1}$.

\paragraph{Step 4.}
Numerically, we find that
\[
    \{\vg_N, \vy_0 - \vx_N, \vy_k - \vy_{N-1} , \vt_{k-1} - \vt_{N-1} \} \subset \cS \cap \cR(\vV_k) 
\] 
is a basis of $\cR(\vV_k)$ with majority of Lyapunov coefficients being zero.
Combining \cref{proposition:linear-algebra-of-lyapunov} with the numerical patterns observed, we hypothesize that
\begin{align*}
    V_k = a_k \sqnorm{y_k - \opT y_{N-1}} + \inprod{y_k - \opT y_{N-1}}{\opT y_{k-1} - \opT y_{N-1}} + b_k \sqnorm{\opT y_{k-1} - \opT y_{N-1}} + c \sqnorm{\topa x_N} + d \inprod{\topa x_N}{y_0 - x_N}
\end{align*}
where $c,d \in \reals$ stays invariant over iterations.
Since the last two terms are constants, we subtract them uniformly from all $V_k$ and consider the shifted Lyapunov function
\begin{align}
\label{eqn:dual-ohm-shifted-lyapunov}
    \widetilde{V}_k = a_k \sqnorm{y_k - y_{N-1}} + \inprod{y_k - y_{N-1}}{\opT y_{k-1} - \opT y_{N-1}} + b_k \sqnorm{\opT y_{k-1} - \opT y_{N-1}}  
\end{align}
with same consecutive differences
\begin{align}
\label{eqn:dual-ohm-lyapunov-difference}
    \widetilde{V}_k - \widetilde{V}_{k+1} =
    \lambda_{k+1,N} \inprod{\topa x_{k+1} - \topa x_N}{x_{k+1} - x_N} .
\end{align}

\paragraph{Step 5.}
In the final step, we plug into \eqref{eqn:dual-ohm-lyapunov-difference} the Lyapunov function \eqref{eqn:dual-ohm-shifted-lyapunov} and the formula $\lambda_{k+1,N} = \frac{2}{(N-k)(N-k-1)}$ that interpolates the numerical values. 
Then, solving the resulting equation for $a_k, b_k, a_{k+1}$ and $b_{k+1}$ successfully yields a solution
\begin{align*}{2}
    a_k = -\frac{N-k+1}{2(N-k)}, \quad b_k = -\frac{N-k-1}{2(N-k)} , \quad  a_{k+1} = -\frac{N-k}{2(N-k-1)}, \quad b_{k+1} = -\frac{N-k-2}{2(N-k-1)} .
\end{align*}

Based on the above procedure, we conclude with the following result:
\begin{proposition}
\label{proposition:dual-ohm}
Let $\opA\colon\reals^d\rightrightarrows\reals^d$ be maximally monotone.
Let $x_\star$ satisfy $0\in\opA x_\star$, set $y_\star=x_\star$, and let $x_0 = y_0 \in \reals^d$ be an initial point.
Let $x_k, y_k$ be iterates generated from \eqref{alg:dual-ohm} with $\opT = 2\opJ_\opA - \opI$, and define
\begin{align*}
    \widetilde{V}_k = -\frac{N-k+1}{2(N-k)} \sqnorm{y_k - y_{N-1}} + \inprod{y_k - y_{N-1}}{\opT y_{k-1} - \opT y_{N-1}} - \frac{N-k-1}{2(N-k)} \sqnorm{\opT y_{k-1} - \opT y_{N-1}}
\end{align*}
for $k=0,\dots,N-1$. Then for $k=0,\dots,N-2$,
\begin{align}
\label{eqn:dual-ohm-consecutive-difference-analytic}
    \widetilde{V}_k - \widetilde{V}_{k+1} = \frac{2}{(N-k)(N-k-1)} \inprod{\topa x_{k+1} - \topa x_N}{x_{k+1} - x_N} .
\end{align}
Consequently, $\widetilde{V}_{N-1} \le \dots \le \widetilde{V}_0$, and this implies $\sqnorm{\topa x_N}\le \frac{1}{N^2}\sqnorm{y_0-y_\star}$.
\end{proposition}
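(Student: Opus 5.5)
The plan is to reduce everything to a finite-dimensional identity among free vectors, as in Step~5 of the procedure.

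\textbf{Rewriting in fixed-point variables.} With $\opT = 2\opJ_\opA - \opI$ we have
\[
x_k = \tfrac12\pr{y_{k-1} + \opT y_{k-1}}, \qquad \topa x_k = \tfrac12\pr{y_{k-1} - \opT y_{k-1}} .
\]
Hence each monotonicity term becomes
\[
\inprod{\topa x_i - \topa x_j}{x_i - x_j} = \tfrac14\pr{\sqnorm{y_{i-1} - y_{j-1}} - \sqnorm{\opT y_{i-1} - \opT y_{j-1}}} ,
\]
which is nonnegative by nonexpansiveness of $\opT$. This removes all resolvent notation, so only $y_k$ and $\opT y_k$ appear.

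\textbf{Consecutive difference \eqref{eqn:dual-ohm-consecutive-difference-analytic}.} Fix $k\in\{0,\dots,N-2\}$ and set
\[
u := y_k - y_{N-1}, \qquad w := \opT y_{k-1} - \opT y_{N-1}, \qquad w' := \opT y_k - \opT y_{N-1} .
\]
For $k=0$ we use the convention $\opT y_{-1} = y_0$. Let $\theta_k = \frac{N-k-1}{N-k}$. By \eqref{alg:dual-ohm},
\[
y_{k+1} - y_{N-1} = u + \theta_k (w' - w) ,
\]
so $\widetilde V_{k+1}$ is a quadratic form in $(u,w,w')$, and $\widetilde V_k$ is a quadratic form in $(u,w)$. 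The right-hand side of \eqref{eqn:dual-ohm-consecutive-difference-analytic} equals
\[
\frac{1}{2(N-k)(N-k-1)}\pr{\sqnorm{u} - \sqnorm{w'}} .
\]
Because $u,w,w'$ can be treated as formal free vectors, the identity reduces to matching the six coefficients of $\sqnorm{u}$, $\sqnorm{w}$, $\sqnorm{w'}$, $\inprod{u}{w}$, $\inprod{u}{w'}$ and $\inprod{w}{w'}$. This is routine rational-function algebra in $k$ and $N$, and could equally be checked with \texttt{sympy}.

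\textbf{Boundary cases.} I will check the edge cases separately. For $k=N-2$ we have $\theta_{N-2} = \tfrac12$. At $k=N-1$ the vector $y_{N-1}-y_{N-1}$ vanishes, so $\widetilde V_{N-1} = -\tfrac{N-k-1}{2(N-k)}\sqnorm{\opT y_{N-2} - \opT y_{N-1}}$ evaluated at $k=N-1$, which is $0$ since the coefficient vanishes. Thus $\widetilde V_{N-1} = 0$, and I must make sure the coefficients plugged in at $k+1 = N-1$ are consistent with this.

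\textbf{Deducing the rate.} Monotonicity of $\widetilde V_k$ follows immediately from the consecutive-difference identity. Telescoping it gives
\[
\widetilde V_0 = \widetilde V_0 - \widetilde V_{N-1} = \sum_{k=1}^{N-1}\lambda_{k,N}\inprod{\topa x_k - \topa x_N}{x_k - x_N} \ge 0 .
\]
For the final rate I will verify the single last-step identity
\[
\sqnorm{\topa x_N} - \frac{1}{N^2}\sqnorm{y_0 - y_\star}
= -\widetilde V_0 - \lambda_{N,\star}\inprod{\topa x_N}{x_N - x_\star} - \sqnorm{\topa x_N - \tfrac1N (y_0 - y_\star)} ,
\]
which is exactly the PEP-style proof with the telescoped sum substituted. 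I will express it in the free vectors $a = y_0 - y_{N-1}$, $b = y_0 - \opT y_{N-1}$ and $e = y_0 - y_\star$. In these variables $2\topa x_N = b - a$, $x_N - x_\star = e - \tfrac12(a+b)$, and $\widetilde V_0$ is an explicit quadratic in $(a,b)$.

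\textbf{Expected obstacle.} The main difficulty I expect is this last step. The value of $\lambda_{N,\star}$ is not stated; I will guess $\lambda_{N,\star} = \frac{2}{N}$, by analogy with APPM/OHM, and fix it by matching the coefficient of $\inprod{b-a}{e}$. I then need to confirm that the remaining coefficients, those of $\sqnorm{a}$, $\sqnorm{b}$ and $\inprod{a}{b}$, also match exactly; the coefficient of $\sqnorm{e}$ already matches. If they do not, I will adjust by recomputing the constant shift $c\sqnorm{\topa x_N} + d\inprod{\topa x_N}{y_0 - x_N}$ that was removed when passing from $V_k$ to $\widetilde V_k$.
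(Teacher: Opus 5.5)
Your proposal is correct and follows the paper's proof. The paper delegates the consecutive-difference identity to a symbolic check, and your coefficient match in $(u,w,w')$ does succeed: it gives exactly $\frac{1}{2(N-k)(N-k-1)}\bigl(\|u\|^2-\|w'\|^2\bigr)$, and since it holds for free vectors it also covers $k=N-2$, where $u=-\tfrac12(w'-w)$. Your last step ($\widetilde V_0\ge \widetilde V_{N-1}=0$, monotonicity at $(x_N,x_\star)$ with $\lambda_{N,\star}=2/N$, and the square $\|\topa x_N-\tfrac1N(y_0-y_\star)\|^2$) is exactly the paper's, and all coefficients in your $(a,b,e)$ check match, so the fallback of readjusting the constant shift is never needed.
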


\begin{proof}
The identity \eqref{eqn:dual-ohm-consecutive-difference-analytic} is verified in \texttt{examples\_lyapunov/dual-ohm/dual\_ohm\_example\_lyap.ipynb} within the codebase. 
Once this is established, we have
\begin{align*}
    0 = \widetilde{V}_{N-1} \le \widetilde{V}_0 & = -\frac{N+1}{2N} \sqnorm{y_0 - y_{N-1}} + \inprod{y_0 - y_{N-1}}{\opT y_{-1} - \opT y_{N-1}} - \frac{N-1}{2N} \sqnorm{\opT y_{-1} - \opT y_{N-1}} \\
    & = -\frac{N+1}{2N} \sqnorm{y_0 - y_{N-1}} + \inprod{y_0 - y_{N-1}}{y_0 - y_{N-1} + 2\topa x_N} - \frac{N-1}{2N} \sqnorm{y_0 - y_{N-1} + 2\topa x_N} \\
    & = \frac{2}{N} \inprod{\topa x_N}{y_0 - y_{N-1}} - \frac{2(N-1)}{N} \sqnorm{\topa x_N} 
\end{align*}
where we use $\opT y_{-1} = y_0$ and $\opT y_{N-1} = y_{N-1} - 2\topa x_N$.
Then the proof is concluded by using $x_N = y_{N-1} - \topa x_N$, the remaining monotonicity inequality and the square term as
\begin{align*}
    0 & \le \frac{2}{N} \inprod{\topa x_N}{y_0 - y_{N-1}} - \frac{2(N-1)}{N} \sqnorm{\topa x_N} + \frac{2}{N} \inprod{\topa  x_N}{x_N - y_\star} + \sqnorm{\topa x_N - \frac{1}{N}(y_0 - y_\star)} \\
    & = -\sqnorm{\topa x_N} + \frac{1}{N^2} \sqnorm{y_0 - y_\star} .
\end{align*}
\end{proof}

\paragraph{Distinction from prior work.} 
Our Lyapunov analysis is different from that of \citep{YoonKimSuhRyu2024_optimal}, which defined $z_k$ as an auxiliary sequence satisfying $z_0 = 0$ and 
\begin{align*}
    z_{k+1} & = \frac{N-k-1}{N-k} z_k - \frac{1}{N-k} (y_k - \opT y_k) \\
    y_{k+1} & = \opT y_k - z_{k+1} .
\end{align*}
Here, $z_k$ is integrated into the update rule, and this was essential to perform the convergence proof in \citep{YoonKimSuhRyu2024_optimal}.
Without rewriting \ref{alg:dual-ohm} into the above form and proving that the two recurrences are equivalent, their Lyapunov analysis does not seem to work.
On the other hand, we show that our Lyapunov function $\widetilde{V}_k$ is nonincreasing directly from the simpler, original update rule.
This enables a smoother extension to the new algorithm \dualoc~in Section~\ref{section:dual-oc-halpern}.

\subsubsection{Optimal strongly-monotone proximal point method \& Optimal contractive Halpern}
\label{section:os-ppm-and-oc-halpern}

Now we assume that $\opA\colon \reals^d \rightrightarrows \reals^d$ is maximally $\mu$-strongly monotone.
The \emph{optimal strongly-monotone proximal point method (OS-PPM)}, or the equivalent 
\emph{optimal contractive Halpern} algorithm \citep{ParkRyu2022_exact} has the update rule 
\begin{align}
\label{alg:oc-halpern}
\tag{OC-Halpern}
    y_k = \left(1 - \frac{1}{\varphi_k} \right) \opT y_{k-1} + \frac{1}{\varphi_k} y_0
\end{align}
where $\gamma=1+2\mu$, $\opT = \left(1 + \frac{1}{\gamma}\right) \opJ_\opA - \frac{1}{\gamma} \opI$ is the $\gamma^{-1}$-contractive operator, and $\varphi_k = \sum_{i=0}^k \gamma^{2i}$.
This algorithm achieves the exact optimal complexity with respect to the squared residual norm $\sqnorm{\topa x_N}$ for this setting, and reduces to APPM/OHM in the case $\mu=0$.
Let $\Gamma_N=\sum_{i=0}^{N-1}\gamma^i$, $\beta_k=\frac{1}{\varphi_k}$, $x_k = \opJ_\opA y_{k-1}$, and $\topa x_k = y_{k-1}-x_k\in\opA x_k$.
Then we have $\opT y_{k-1}=x_k-\frac{1}{\gamma}\topa x_k$.

OS-PPM/OC-Halpern has the rate 
\(\sqnorm{\topa x_N}\le \Gamma_N^{-2}\sqnorm{y_0-x_\star}\),
whose PEP-style proof takes the form
\begin{align}
\label{eqn:oc-halpern-pep-proof}
    \sqnorm{\topa x_N}-\frac{1}{\Gamma_N^2}\sqnorm{y_0-x_\star}
    &=
    -\sum_{k=1}^{N-1}\lambda_{k,k+1}
    \left(
    \inprod{\topa x_k-\topa x_{k+1}}{x_k-x_{k+1}}
    -\mu\sqnorm{x_k-x_{k+1}}
    \right) \notag\\
    &\quad
    -\lambda_{N,\star}
    \left(
    \inprod{\topa x_N}{x_N-x_\star}
    -\mu\sqnorm{x_N-x_\star}
    \right) \notag\\
    &\quad
    -\frac{\gamma^{-N}}{\Gamma_N^2}
    \sqnorm{\Gamma_N \topa x_N+(x_N-y_0)-\gamma^N(x_N-x_\star)},
\end{align}
where $\lambda_{k,k+1}, \lambda_{N,\star} \ge 0$ are specified later.
As in APPM/OHM, we have $\cI = \{(k,k+1)\,|\,k=1,\dots,N-1\}$ and $|\cJ| = 1$. 

\paragraph{Step 1.}
Take $\cI_0=\cJ_0=\emptyset$ and $\cI_k=\{(j,j+1)\,|\,j=1,\dots,k\}, \cJ_k=\emptyset$ for $k=1,\dots,N-1$.
So $V_0=0$, and
\[
    V_k = -\sum_{j=1}^{k}\lambda_{j,j+1} \left(
    \inprod{\topa x_j-\topa x_{j+1}}{x_j-x_{j+1}}
    -\mu\sqnorm{x_j-x_{j+1}}
    \right) , 
    \quad k=1,\dots,N-1 .
\]

\paragraph{Step 2.}
This choice satisfies sufficiency, with the remaining terms in the proof are terminal strong monotonicity inequality for the pair $(N,\star)$ and the single square term in \eqref{eqn:oc-halpern-pep-proof}.
Numerically, $\vV_0=0$, while $\vV_k$ has rank $2$ for $k=1,\dots,N-1$.

\paragraph{Step 3.}
We construct the set of special vectors as
\[
    \cT=\{\vy_0,\dots,\vy_{N-1},\vx_\star,\vx_1,\dots,\vx_N,\vg_1,\dots,\vg_N\},
    \qquad
    \cS=\cT\cup\{\vu-\vv \,|\, \vu,\vv\in\cT\}. 
\]

\paragraph{Step 4.}
For $k=1,\dots,N-1$, we identify $\{\vg_{k+1},\vy_0-\vx_{k+1}\}\subset \cS\cap\cR(\vV_k)$ as a basis of $\cR(\vV_k)$.
Hence, by \cref{proposition:linear-algebra-of-lyapunov},
\[
    V_k
    =
    a_k\sqnorm{\topa x_{k+1}}
    +b_k\inprod{\topa x_{k+1}}{y_0-x_{k+1}}
    +c_k\sqnorm{y_0-x_{k+1}},
    \qquad k=0,\dots,N-1.
\]

\paragraph{Step 5.}
Substituting the above form into the identity
\begin{align}
\label{eqn:oc-halpern-lyapunov-difference}
    V_k-V_{k+1}
    =
    \lambda_{k+1,k+2}
    \left(
    \inprod{\topa x_{k+1}-\topa x_{k+2}}{x_{k+1}-x_{k+2}}
    -\mu\sqnorm{x_{k+1}-x_{k+2}}
    \right)
\end{align}
for $k=0,\dots,N-2$, and using the analytic form 
$\lambda_{k,k+1} = \frac{2(1+\gamma)}{(1+\gamma^{-N})\Gamma_N^2} \gamma^{-2k}\varphi_k\varphi_{k-1}$ 
and the update relation
\[
    y_0-x_{k+2}
    =
    (1-\beta_{k+1})(y_0-x_{k+1})
    +\frac{1-\beta_{k+1}}{\gamma}\topa x_{k+1}
    +\topa x_{k+2},
\]
we can symbolically verify that the following is a solution:
\begin{align*}
    a_k &= \frac{(1+\gamma)(1-\beta_{k+1})\lambda_{k+1,k+2}}{2\gamma^2} =
    \frac{(1+\gamma)^2}{(1+\gamma^{-N})\Gamma_N^2}
    \gamma^{-2(k+1)}\varphi_k^2  \\
    a_{k+1} & = \frac{(1+\gamma)\lambda_{k+1,k+2}}{2(1-\beta_{k+1})} = \frac{(1+\gamma)^2}{(1+\gamma^{-N})\Gamma_N^2}
    \gamma^{-2(k+2)}\varphi_{k+1}^2 \\
    b_k &= -\beta_{k+1} \lambda_{k+1,k+2} = -\frac{2(1+\gamma)}{(1+\gamma^{-N})\Gamma_N^2}
    \gamma^{-2(k+1)}\varphi_k \\
    b_{k+1} &= -\frac{\beta_{k+1} \lambda_{k+1,k+2}}{1-\beta_{k+1}} = 
    -\frac{2(1+\gamma)}{(1+\gamma^{-N})\Gamma_N^2} \gamma^{-2(k+2)}\varphi_{k+1} \\
    c_k &= - \mu \beta_{k+1} \lambda_{k+1,k+2} = 
    -\frac{2\mu(1+\gamma)}{(1+\gamma^{-N})\Gamma_N^2} \gamma^{-2(k+1)}\varphi_k \\
    c_{k+1} &= -\frac{\mu\beta_{k+1} \lambda_{k+1,k+2}}{1-\beta_{k+1}} =
    -\frac{2\mu(1+\gamma)}{(1+\gamma^{-N})\Gamma_N^2} \gamma^{-2(k+2)}\varphi_{k+1}
\end{align*}
where we use the identities $\beta_{k+1} = \frac{1}{\varphi_{k+1}}$ and $1-\beta_{k+1} = 1-\frac{1}{\varphi_{k+1}} = \frac{\varphi_{k+1}-1}{\varphi_{k+1}} = \frac{\gamma^2 \varphi_k}{\varphi_{k+1}}$, where the last equality follows from
\[
    \varphi_{k+1} - 1 = \sum_{i=1}^{k+1} \gamma^{2i} = \gamma^2 \sum_{i=0}^k \gamma^{2i} = \gamma^2 \varphi_k .
\]
Summarizing, we state the convergence result:
\begin{proposition}
\label{proposition:oc-halpern}
Let $\opA\colon\reals^d\rightrightarrows\reals^d$ be maximally $\mu$-strongly monotone with $\mu\ge 0$, and let $\gamma=1+2\mu$.
Let $x_\star$ satisfy $0 \in \opA x_\star$, set $y_\star=x_\star$, and let $y_0\in\reals^d$ be an initial point.
Let $x_k,y_k$ be generated by \eqref{alg:oc-halpern} with $\opT = \left(1 + \frac{1}{\gamma}\right) \opJ_\opA - \frac{1}{\gamma} \opI$, with $x_k=\opJ_\opA y_{k-1}$ and $\topa x_k=y_{k-1}-x_k$.
Define $V_k$ by
\begin{align}
\label{eqn:oc-halpern-lyapunov}
    V_k
    =
    \frac{1+\gamma}{(1+\gamma^{-N})\Gamma_N^2}
    \gamma^{-2(k+1)}\varphi_k
    \left(
    (1+\gamma)\varphi_k\sqnorm{\topa x_{k+1}}
    -2\inprod{\topa x_{k+1}}{y_0-x_{k+1}}
    -2\mu\sqnorm{y_0-x_{k+1}}
    \right).
\end{align}
$k=0,\dots,N-1$.
Then $V_0=0$, and for $k=0,\dots,N-2$, \eqref{eqn:oc-halpern-lyapunov-difference} holds with
\[
    \lambda_{k+1,k+2}
    =
    \frac{2(1+\gamma)}{(1+\gamma^{-N})\Gamma_N^2}
    \gamma^{-2(k+1)}\varphi_{k+1}\varphi_k .
\]
Consequently, $V_{N-1}\le\dots\le V_0 = 0$, and this implies
\[
    \sqnorm{\topa x_N}\le \frac{1}{\Gamma_N^2}\sqnorm{y_0-x_\star}.
\]
\end{proposition}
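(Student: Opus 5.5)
The plan is to verify three things in turn: the base value $V_0=0$, the one-step identity \eqref{eqn:oc-halpern-lyapunov-difference} for $k=0,\dots,N-2$ by direct coefficient matching, and the final step that turns $V_{N-1}\le 0$ into the stated rate.

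\textbf{Base case.} At $k=0$ we have $\varphi_0=1$ and $x_1=\opJ_\opA y_0$, so $y_0-x_1=\topa x_1$. The bracket in \eqref{eqn:oc-halpern-lyapunov} then collapses to
\[
\bigl((1+\gamma)-2-2\mu\bigr)\sqnorm{\topa x_1}=(\gamma-1-2\mu)\sqnorm{\topa x_1}=0,
\]
using $\gamma=1+2\mu$. Hence $V_0=0$.

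\textbf{One-step identity.} Write $\beta=\beta_{k+1}$ and work in the three free vectors $g=\topa x_{k+1}$, $g'=\topa x_{k+2}$ and $w=y_0-x_{k+1}$. These are arbitrary, since $y_0$ and the $\topa x_i$ are free.
\begin{enumerate}
\item Derive the update relation. Combine $y_{k+1}=(1-\beta)\opT y_k+\beta y_0$, $\opT y_k=x_{k+1}-\gamma^{-1}\topa x_{k+1}$ and $x_{k+2}=y_{k+1}-\topa x_{k+2}$ to get
\[
y_0-x_{k+2}=(1-\beta)w+\tfrac{1-\beta}{\gamma}g+g'.
\]
\item Deduce $x_{k+1}-x_{k+2}=-\beta w+\tfrac{1-\beta}{\gamma}g+g'$.
\item Expand both sides of \eqref{eqn:oc-halpern-lyapunov-difference} as quadratic forms in $(g,g',w)$.
\item Match the six coefficients, for $\|g\|^2,\|g'\|^2,\|w\|^2,\langle g,g'\rangle,\langle g,w\rangle,\langle g',w\rangle$.
\end{enumerate}
This reduces the identity to six scalar identities in $\gamma,\varphi_k,\varphi_{k+1}$. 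I would check them using only $1-\beta_{k+1}=\gamma^2\varphi_k/\varphi_{k+1}$ (i.e.\ $\varphi_{k+1}=1+\gamma^2\varphi_k$) and $2\mu=\gamma-1$. The coefficient formulas for $a_k,b_k,c_k,a_{k+1},b_{k+1},c_{k+1}$ listed in Step~5 above are exactly the claimed solution. I would also cite the accompanying notebook's sympy check as a confirmation.

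\textbf{Final step.} Telescoping the one-step identity, together with $\lambda\ge0$ and strong monotonicity, gives $V_{N-1}\le\dots\le V_0=0$. For the rate, add to $V_{N-1}$ the two remaining terms of \eqref{eqn:oc-halpern-pep-proof}:
\[
-\lambda_{N,\star}\bigl(\inprod{\topa x_N}{x_N-x_\star}-\mu\sqnorm{x_N-x_\star}\bigr)
\quad\text{and}\quad
-\frac{\gamma^{-N}}{\Gamma_N^2}\sqnorm{\Gamma_N\topa x_N+(x_N-y_0)-\gamma^N(x_N-x_\star)}.
\]
Then show that the total equals $\sqnorm{\topa x_N}-\Gamma_N^{-2}\sqnorm{y_0-x_\star}$. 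This is again a coefficient match, now in the variables $\topa x_N$, $u=y_0-x_N$ and $v=x_N-x_\star$, with $y_0-x_\star=u+v$.

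\textbf{Expected obstacle.} This final identity is the step I expect to be hardest, because it requires a closed form for $\lambda_{N,\star}$, which the statement does not give. I would determine it from the $\langle\topa x_N,v\rangle$ coefficient. Collecting $\|v\|^2$ then gives $\mu\lambda_{N,\star}-\gamma^{N}/\Gamma_N^2$, which must equal $-\Gamma_N^{-2}$. That forces $\lambda_{N,\star}=(\gamma^N-1)/(\mu\Gamma_N^2)=2(\gamma-1)^{-1}(\gamma^N-1)/\Gamma_N^2$ for $\mu>0$; I would also check its $\mu\to0$ limit. The remaining coefficients should then match after substituting $\varphi_{N-1}=(\gamma^{2N}-1)/(\gamma^2-1)$ and $\Gamma_N=(\gamma^N-1)/(\gamma-1)$. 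The factor $1+\gamma^{-N}$ should cancel through $\gamma^{2N}-1=(\gamma^N-1)(\gamma^N+1)$.
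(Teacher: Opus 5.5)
Your proposal is correct and follows the same route as the paper: check $V_0=0$, verify the one-step identity by coefficient matching after eliminating $x_{k+2}$ through the update relation (the paper delegates this to \texttt{sympy}), then close with the terminal identity built from the leftover strong monotonicity inequality and the square term. Your $\lambda_{N,\star}=(\gamma^N-1)/(\mu\Gamma_N^2)$ simplifies to $2/\Gamma_N$, which is exactly the paper's choice; reading it off the $\inprod{\topa x_N}{v}$ coefficient gives this value directly for every $\mu\ge 0$, so the separate $\mu\to 0$ limit check is unnecessary.
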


\begin{proof}
The identity \eqref{eqn:oc-halpern-lyapunov-difference} is verified in \texttt{examples\_lyapunov/oc\_halpern/oc\_halpern\_example\_lyap.ipynb} within the codebase.
This immediately establishes $V_{N-1} \le V_0 = 0$.
Now, directly plugging in $\varphi_{N-1} = \frac{1-\gamma^{2N}}{1-\gamma^2}$, $\Gamma_N = \frac{1-\gamma^N}{1-\gamma}$ and $\mu = \frac{\gamma - 1}{2}$ into 
the expression \eqref{eqn:oc-halpern-lyapunov} with $k=N-1$, we obtain
\begin{align*}
    V_{N-1} & = \frac{(1-\gamma)\gamma^{-N}}{1-\gamma^N} \left( \frac{1-\gamma^{2N}}{1-\gamma} \sqnorm{\topa x_N} - 2\inprod{\topa x_{N}}{y_0-x_{N}} - 2\mu\sqnorm{y_0-x_{N}} \right) \\
    & = (1 + \gamma^{-N}) \sqnorm{\topa x_N} - \frac{2(1-\gamma)\gamma^{-2N}}{1-\gamma^N} \inprod{\topa x_{N}}{y_0-x_{N}} + \frac{(1-\gamma)^2 \gamma^{-2N}}{1-\gamma^N} \sqnorm{y_0-x_{N}} .
\end{align*}
Then, let $\lambda_{N,\star} = \frac{2}{\Gamma_N}$ and consider the inequality
\begin{align*}
    0 & \ge V_{N-1} - \lambda_{N,\star} \left(
    \inprod{\topa x_N}{x_N-x_\star}
    -\mu\sqnorm{x_N-x_\star}
    \right) - \frac{\gamma^{-N}}{\Gamma_N^2}
    \sqnorm{\Gamma_N \topa x_N+(x_N-y_0)-\gamma^N(x_N-x_\star)} .
\end{align*}
The proof is completed by showing that the right hand side simplifies to $\sqnorm{\topa x_N} - \frac{1}{\Gamma_N^2} \sqnorm{y_0 - x_\star}$.
We provide this verification via \texttt{sympy} within \texttt{examples\_lyapunov/oc\_halpern/oc\_halpern\_example\_lyap.ipynb}, and omit the tedious details here.
\end{proof}

\paragraph{Remark.} The original work \citep{ParkRyu2022_exact} that introduced \eqref{alg:oc-halpern} also provided a Lyapunov-style proof.
Our Lyapunov function in \cref{proposition:oc-halpern} is equivalent to their proof under rescaling and translation.

\subsection{Smooth convex minimization}

We consider the setting provided in Section~\ref{section:setting-smooth-convex-minimization}, where we solve minimization problems 
with respect to convex and $L$-smooth $f\colon \reals^d \to \reals$ with a minimizer $x_\star$. 

\subsubsection{Optimized gradient method}
\label{section:ogm}

One of the equivalent forms of the \emph{optimized gradient method} \citep[OGM2]{KimFessler2016_optimized} is given as
\begin{equation*}
\label{alg:ogm2}
\tag{OGM2}
    \begin{aligned}
        y_{k+1} &= x_k - \frac{1}{L} \nabla f(x_k) \\
        z_{k+1} &= z_k - \frac{2\theta_k}{L} \nabla f(x_k) \\
        x_{k+1} &= \pr{ 1 - \frac{1}{\theta_{k+1}} } y_{k+1} + \frac{1}{\theta_{k+1}} z_{k+1},
    \end{aligned}
\end{equation*}
for $k=0,\dots,N-1$, initialized with $z_0=x_0$, where $\theta_k$ is the sequence defined by the recurrence
\begin{equation}
\label{eqn:ogm-theta-recurrence}
    \begin{aligned}
        \theta_{-1} &= 0 \\
        \theta_{k} &= \frac{1 + \sqrt{4\theta_{k-1}^2+1}}{2}, \qquad k=0,\dots, N-1 \\
        \theta_{N} &= \frac{1 + \sqrt{8\theta_{N-1}^2+1}}{2} .
    \end{aligned}
\end{equation}
\ref{alg:ogm2} converges with the rate
\[
    f(x_N) - f(x_\star) \le \frac{L}{2\theta_N^2}\sqnorm{x_0 - x_\star},
\]
and the PEP-style proof of this convergence takes the form
\begin{align*}
    f(x_N) - f(x_\star) - \frac{L}{2\theta_N^2} \sqnorm{x_0 - x_\star}
    & =
    \sum_{i=1}^N \lambda_{i-1,i}
    \left(
        f(x_i)-f(x_{i-1})
        +\inprod{\nabla f(x_i)}{x_{i-1}-x_i}
        +\frac{1}{2L}\sqnorm{\nabla f(x_{i-1})-\nabla f(x_i)}
    \right) \\
    & \quad
    + \sum_{i=0}^N \lambda_{\star,i}
    \left(
        f(x_i)-f(x_\star)
        +\inprod{\nabla f(x_i)}{x_\star-x_i}
        +\frac{1}{2L}\sqnorm{\nabla f(x_i)}
    \right) \\
    & \quad
    - \frac{L}{2\theta_N^2}
    \sqnorm{z_N - x_\star - \frac{\theta_N}{L}\nabla f(x_N)},
\end{align*}
where $\lambda_{i-1,i}, \lambda_{\star,i} \ge 0$ are specified later.
We have $\cI = \{(i,i+1)\,|\,i=0,\dots,N-1\} \cup \{(\star,i)\,|\,i=0,\dots,N\}$ and $|\cJ| = 1$.

\paragraph{Step 1.}
Take $\cI_0 = \{(\star,0)\}$ and for $k=1,\dots,N$,
\[
    \cI_k
    =
    \{(i,i+1)\,|\,i=0,\dots,k-1\}
    \cup \{(\star,i)\,|\,i=0,\dots,k\}.
\]
We set $\cJ_k=\emptyset$ for $k=0,\dots,N$.

\paragraph{Step 2.}
This choice is sufficient because $\cI_N=\cI$ and $|\cJ\setminus\cJ_N| = 1$.
Numerically, the quadratic part of the partial sum $V_k$ has rank $3$ for $k=1,\dots,N-1$, with $V_0$ and $V_N$ being the only exceptions with rank $2$.
The function-value support is also sparse: $V_k$ only involves $f(x_k)$ and $f(x_\star)$.

\paragraph{Step 3.}
We construct the set of special vectors using the atomic vectors
\begin{align*}
    \cT
    =
    \{\vx_0,\dots,\vx_N,\vx_\star,\vg_0,\dots,\vg_N,\vz_1,\dots,\vz_N\},
    \qquad
    \cS = \cT \cup \{\vu-\vv \,|\, \vu,\vv\in\cT\},
\end{align*}
where $\vx_i$, $\vg_i$, and $\vz_i$ represent $x_i$, $\nabla f(x_i)$, and $z_i$, respectively.

\paragraph{Step 4.}
For $k=0,\dots,N-1$, we find the basis vectors
\[
    \left\{ \vg_k, \vx_0-\vx_\star, \vz_{k+1}-\vx_\star \right\} \subset \cS\cap\cR(\vV_k).
\]
Together with the sparse function-value support identified in Step~2, and using numerical observation, we hypothesize:
\begin{align*}
    V_k = a_k \bigl(f(x_k)-f(x_\star)\bigr)
    +b_k \sqnorm{\nabla f(x_k)}
    +c \sqnorm{z_{k+1}-x_\star}
    -c \sqnorm{x_0-x_\star}
\end{align*}
where $c\in \reals$ is constant over $k=0,\dots,N-1$. 
Adding the constant term $c\sqnorm{x_0 - x_\star}$ to each $V_k$, we consider the translated Lyapunov function
\begin{align*}
    \widetilde V_k
    =
    V_k + c\sqnorm{x_0-x_\star}
    =
    a_k\bigl(f(x_k)-f(x_\star)\bigr)
    +b_k\sqnorm{\nabla f(x_k)}
    +c\sqnorm{z_{k+1}-x_\star}
\end{align*}
for $k=0,\dots,N-1$.

\paragraph{Step 5.}
We use the analytic form of the PEP certificates:
\begin{align*}
    \lambda_{i,i+1}
    =
    \frac{2\theta_i^2}{\theta_N^2}
    \,\, (i=0,\dots,N-1),
    \qquad
    \lambda_{\star,i}
    =
    \begin{cases}
        \lambda_{i,i+1}-\lambda_{i-1,i} = \frac{2\theta_i}{\theta_N^2}, & i=0,\dots,N-1,\\[2mm]
        1-\lambda_{N-1,N} = \frac{1}{\theta_N}, & i=N .
    \end{cases}
\end{align*}
Here we use the recurrence $\theta_i^2 - \theta_i = \theta_{i-1}^2$ for $i=1,\dots,N-1$ and $\theta_N^2-\theta_N=2\theta_{N-1}^2$, 
together with the convention $\lambda_{-1,0}=0$.
Then, we solve the equation
\begin{align}
\label{eqn:ogm2-interior-identity}
\begin{aligned}
    \widetilde V_k
    =
    \widetilde V_{k+1}
    & - \frac{2\theta_k^2}{\theta_N^2}
    \left(
        f(x_{k+1})-f(x_k)
        +\inprod{\nabla f(x_{k+1})}{x_k-x_{k+1}}
        +\frac{1}{2L}\sqnorm{\nabla f(x_k)-\nabla f(x_{k+1})}
    \right) \\
    & - \frac{2\theta_{k+1}}{\theta_N^2}
    \left(
        f(x_{k+1})-f(x_\star)
        +\inprod{\nabla f(x_{k+1})}{x_\star-x_{k+1}}
        +\frac{1}{2L}\sqnorm{\nabla f(x_{k+1})}
    \right)
\end{aligned}
\end{align}
for $a_k,b_k,c,a_{k+1},b_{k+1}$.
Enforcing the consistency of formulas in $k$, this uniquely yields
\[
    a_k=\frac{2\theta_k^2}{\theta_N^2}, \quad 
    b_k=-\frac{\theta_k^2}{L\theta_N^2}, \quad
    c=\frac{L}{2\theta_N^2}.
\]
Finally, we obtain the analytic expression for $\widetilde V_N$ by simplifying
\begin{align}
\label{eqn:ogm2-final-identity}
\begin{aligned}
    \widetilde V_N
    =
    \widetilde V_{N-1} 
    &+ \frac{2\theta_{N-1}^2}{\theta_N^2}
    \left(
        f(x_N)-f(x_{N-1})
        +\inprod{\nabla f(x_N)}{x_{N-1}-x_N}
        +\frac{1}{2L}\sqnorm{\nabla f(x_{N-1})-\nabla f(x_N)}
    \right) \\
    &+\frac{1}{\theta_N}
    \left(
        f(x_N)-f(x_\star)
        +\inprod{\nabla f(x_N)}{x_\star-x_N}
        +\frac{1}{2L}\sqnorm{\nabla f(x_N)}
    \right)
\end{aligned}
\end{align}
using $\theta_N^2-\theta_N=2\theta_{N-1}^2$. This reveals
\[
    \widetilde V_N
    =
    f(x_N)-f(x_\star)
    +\frac{L}{2\theta_N^2}
    \sqnorm{z_N-x_\star-\frac{\theta_N}{L}\nabla f(x_N)} .
\]
Putting these pieces together, we obtain the following final result:
\begin{proposition}
\label{proposition:ogm2}
Let $f\colon\reals^d\to\reals$ be convex and $L$-smooth, let $x_\star$ be a minimizer of $f$, and let $x_k,y_k,z_k$ be generated by \eqref{alg:ogm2} with $z_0=x_0$.
Define
\begin{align*}
    \widetilde V_k
    =
    \frac{2\theta_k^2}{\theta_N^2}\bigl(f(x_k)-f(x_\star)\bigr)
    -\frac{\theta_k^2}{L\theta_N^2}\sqnorm{\nabla f(x_k)}
    +\frac{L}{2\theta_N^2}\sqnorm{z_{k+1}-x_\star}
\end{align*}
for $k=0,\dots,N-1$ and 
\begin{align*}
    \widetilde V_N
    =
    f(x_N)-f(x_\star)
    +\frac{L}{2\theta_N^2}
    \sqnorm{z_N-x_\star-\frac{\theta_N}{L}\nabla f(x_N)} .
\end{align*}
Then, with
\[
    \lambda_{i,i+1}
    =
    \frac{2\theta_i^2}{\theta_N^2},
    \qquad
    \lambda_{\star,i}
    =
    \begin{cases}
        \frac{2\theta_i}{\theta_N^2}, & i=0,\dots,N-1,\\[2mm]
        \frac{1}{\theta_N}, & i=N ,
    \end{cases}
\]
the identities \eqref{eqn:ogm2-interior-identity} for $k=0,\dots,N-2$ and \eqref{eqn:ogm2-final-identity} hold.
Furthermore,
\begin{align*}
    \widetilde V_0-\frac{L}{2\theta_N^2}\sqnorm{x_0-x_\star}
    =
    \frac{2}{\theta_N^2}
    \left(
        f(x_0)-f(x_\star)
        +\inprod{\nabla f(x_0)}{x_\star-x_0}
        +\frac{1}{2L}\sqnorm{\nabla f(x_0)}
    \right) 
    \le 0 .
\end{align*}
This establishes
\begin{align*}
    f(x_N) - f(x_\star) \le \widetilde V_N \le \widetilde V_0 \le \frac{L}{2\theta_N^2}\sqnorm{x_0-x_\star} .
\end{align*}
\end{proposition}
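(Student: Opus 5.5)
The plan is to verify the three claimed identities by direct algebra, and then chain them with the cocoercivity inequalities. All identities are checked by comparing coefficients. Write $g_i=\nabla f(x_i)$. Two facts drive every cancellation:
\begin{itemize}
\item[(a)] the $z$-update $z_{k+1}=z_k-\frac{2\theta_k}{L}g_k$;
\item[(b)] the coupling $x_{k+1}=(1-\frac{1}{\theta_{k+1}})(x_k-\frac1L g_k)+\frac{1}{\theta_{k+1}}z_{k+1}$.
\end{itemize}
Fact (b) can be rewritten as
\[
z_{k+1}-x_{k+1}=(\theta_{k+1}-1)\Bigl(x_{k+1}-x_k+\tfrac1L g_k\Bigr).
\]
Together with the $\theta$-recurrences $\theta_{k+1}^2-\theta_{k+1}=\theta_k^2$ (for $k+1\le N-1$) and $\theta_N^2-\theta_N=2\theta_{N-1}^2$, these are the only algebraic relations needed.

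\textbf{Interior identity \eqref{eqn:ogm2-interior-identity}.} I first match the function-value parts. The coefficient of $f(x_k)$ is $a_k-\lambda_{k,k+1}=0$. The coefficient of $f(x_{k+1})$ is $-a_{k+1}+\lambda_{k,k+1}+\lambda_{\star,k+1}$, which is proportional to $-2\theta_{k+1}^2+2\theta_k^2+2\theta_{k+1}=0$. The coefficient of $f(x_\star)$ then balances automatically.

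For the quadratic part, I expand
\[
c\bigl(\|z_{k+2}-x_\star\|^2-\|z_{k+1}-x_\star\|^2\bigr)
\]
using (a) with index $k+1$. Since $c=\frac{L}{2\theta_N^2}$, this gives
\[
-\tfrac{2\theta_{k+1}}{\theta_N^2}\langle g_{k+1},z_{k+1}-x_\star\rangle+\tfrac{2\theta_{k+1}^2}{L\theta_N^2}\|g_{k+1}\|^2.
\]
I then split $z_{k+1}-x_\star=(z_{k+1}-x_{k+1})+(x_{k+1}-x_\star)$ and substitute the rewritten form of (b). After this, every term is a quadratic form in $g_k$, $g_{k+1}$, $x_k-x_{k+1}$ and $x_{k+1}-x_\star$.

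On this common basis I compare coefficients on both sides:
\begin{itemize}
\item The term $\langle g_{k+1},x_\star-x_{k+1}\rangle$ matches directly with $\lambda_{\star,k+1}$.
\item The term $\langle g_{k+1},x_k-x_{k+1}\rangle$ requires $2\theta_{k+1}(\theta_{k+1}-1)=2\theta_k^2$.
\item The terms $\langle g_k,g_{k+1}\rangle$, $\|g_k\|^2$ and $\|g_{k+1}\|^2$ are checked against $b_k$, $b_{k+1}$ and the $\frac{1}{2L}$-terms, again using $\theta_{k+1}^2-\theta_{k+1}=\theta_k^2$.
\end{itemize}

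\textbf{Final identity \eqref{eqn:ogm2-final-identity}.} The same expansion applies, except that $\widetilde V_N$ contains
\[
\|z_N-x_\star-\tfrac{\theta_N}{L}g_N\|^2
\]
in place of $\|z_{N+1}-x_\star\|^2$. The cross term with $g_N$ therefore has coefficient $\theta_N$ instead of $2\theta_N$. The modified recurrence $\theta_N^2-\theta_N=2\theta_{N-1}^2$ exactly compensates for this, both in the function-value coefficient (total weight $1$ on $f(x_N)$) and in the coupling term (b) with $k+1=N$.

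\textbf{Initial identity.} Use $\theta_0=1$ and $z_1=x_0-\frac2L g_0$. Expanding $\|z_1-x_\star\|^2-\|x_0-x_\star\|^2$ gives a one-line check against $\frac{2}{\theta_N^2}$ times the cocoercivity inequality between $x_\star$ and $x_0$.

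\textbf{Conclusion and main obstacle.} Each cocoercivity bracket is $\le0$ and the multipliers are nonnegative, so the identities yield $\widetilde V_N\le\widetilde V_{N-1}\le\dots\le\widetilde V_0\le\frac{L}{2\theta_N^2}\|x_0-x_\star\|^2$. Finally, $f(x_N)-f(x_\star)\le\widetilde V_N$ because the remaining term in $\widetilde V_N$ is a nonnegative square. I expect the main obstacle to be the bookkeeping of the quadratic coefficients: correctly eliminating $z_{k+1}$ through (b), and handling the altered last step at $k=N-1$. As a safeguard I would cross-check this symbolically in \texttt{sympy}, as done for the earlier propositions.
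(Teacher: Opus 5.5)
Your proposal is correct and takes essentially the paper's approach: the paper also verifies \eqref{eqn:ogm2-interior-identity} and \eqref{eqn:ogm2-final-identity} by coefficient matching, though it delegates that check to \texttt{sympy}, and it then derives the initial identity from $\theta_0=1$ and $z_1=x_0-\frac{2}{L}\nabla f(x_0)$ before chaining the inequalities. Your hand derivation spells out what the symbolic check confirms, namely the rewriting $z_{k+1}-x_{k+1}=(\theta_{k+1}-1)\bigl(x_{k+1}-x_k+\frac{1}{L}\nabla f(x_k)\bigr)$ combined with the recurrences $\theta_{k+1}(\theta_{k+1}-1)=\theta_k^2$ and $\theta_N(\theta_N-1)=2\theta_{N-1}^2$, and every coefficient you list matches on both sides.
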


\begin{proof}
The symbolic verification of the identities \eqref{eqn:ogm2-interior-identity} and \eqref{eqn:ogm2-final-identity} is provided in 
\path{examples_lyapunov/ogm/ogm_example_lyap.ipynb} within the codebase.
Finally, using $\theta_0 = 1$ and $z_1 = z_0 - \frac{2\theta_0}{L} \nabla f(x_0) = x_0 - \frac{2}{L} \nabla f(x_0)$, we have
\begin{align*}
    \widetilde V_0 -\frac{L}{2\theta_N^2}\sqnorm{x_0-x_\star} & = 
    \frac{2}{\theta_N^2} \left( f(x_0) - f(x_\star) - \frac{1}{2L} \sqnorm{\nabla f(x_0)} + \frac{L}{4} \sqnorm{x_0 - \frac{2}{L} \nabla f(x_0) - x_\star} - \frac{L}{4} \sqnorm{x_0 - x_\star} \right) \\
    & = \frac{2}{\theta_N^2} \left( f(x_0)-f(x_\star)
    +\inprod{\nabla f(x_0)}{x_\star-x_0}
    +\frac{1}{2L}\sqnorm{\nabla f(x_0)} \right) 
\end{align*}
as desired, and the proof is complete.
\end{proof}

\paragraph{Remark.}
The original proof of OGM in \citep{KimFessler2016_optimized} was first presented in the PEP style, and was later reformulated into Lyapunov-style proofs equivalent to \cref{proposition:ogm2} in \citep{dAspremontScieurTaylor2021_acceleration,ParkParkRyu2023_factorsqrt2}.

\subsubsection{OGM without the prior knowledge of \texorpdfstring{$z_k$}{zk}}
\label{section:ogm-without-z}

An alternative definition for \ref{alg:ogm2} \citep[OGM1]{KimFessler2016_optimized} does not use the auxiliary iterate $z_k$, and is given by the update rule
\begin{align}
\label{alg:ogm1}
\tag{OGM1}
\begin{aligned}
    y_{k+1} & = x_k - \frac{1}{L} \nabla f(x_k) \\
    x_{k+1} & = y_{k+1} + \frac{\theta_k - 1}{\theta_{k+1}} (y_{k+1} - y_k) + \frac{\theta_k}{\theta_{k+1}} (y_{k+1} - x_k ) .
\end{aligned}
\end{align}
In this case, assuming that we do not have the knowledge of the equivalent reformulation \ref{alg:ogm2} that seems essential for the Lyapunov analysis to work, can we still recover the same Lyapunov function?
We show that this is possible, using our basis completion procedure of Section~\ref{section:basis-completion}.

Because \ref{alg:ogm1} is equivalent to \ref{alg:ogm2}, it produces the same sequence of iterates $x_k$; hence, up to Step~2, there is no difference with Section~\ref{section:ogm}.
The first distinction arises in the following step:

\paragraph{Step 3.} We construct the set of special vectors as
\begin{align*}
    \cT
    =
    \{\vx_0,\dots,\vx_N,\vx_\star,\vg_0,\dots,\vg_N\} ,
    \qquad
    \cS = \cT \cup \{\vu-\vv \,|\, \vu,\vv\in\cT\} ,
\end{align*}
where we lack the vectors $\vz_k$.

\paragraph{Step 4.} Then, unlike in Section~\ref{section:ogm}, we only find
\[
    \{ \vg_k, \vx_0 - \vx_\star \} \subset \cS \cap \cR(\vV_k)
\]
and fail to complete the basis (recall that $\mathrm{rank}(\vV_k) = 3$).
However, using the basis completion procedure, we numerically observe that there exist constants $c,d\in \reals$ and $v_{k} \in d(x_0 - x_\star) + \mathrm{span}\{\nabla f(x_0), \dots, \nabla f(x_{k})\}$ such that
\begin{equation*}
    V_k = a_k (f(x_k) - f(x_{\star})) + b_k \norm{ \nabla f(x_k) }^2 + c \sqnorm{x_0 - x_\star} + \norm{ v_{k} }^2
\end{equation*}
and $v_{k+1} = v_{k} - \eta_{k} \nabla f(x_{k+1})$ for some $\eta_{k} > 0$, for $k=0,\dots,N-2$.
This is seen from the numerical values of $v_k$ found for $k=0,\dots,N-1$ in the case $N=5$:
\[
\begin{array}{c|rrrrrrrr}
 & x_0 & x_\star & \nabla f(x_0) & \nabla f(x_1) & \nabla f(x_2) & \nabla f(x_3) & \nabla f(x_4) & \nabla f(x_5) \\
\hline
v_0 & 0.136 & -0.136 & -0.273 & 0.0 & 0.0 & 0.0 & 0.0 & 0.0 \\
v_1 & 0.136 & -0.136 & -0.273 & -0.441 & 0.0 & 0.0 & 0.0 & 0.0 \\
v_2 & 0.136 & -0.136 & -0.273 & -0.441 & -0.598 & 0.0 & 0.0 & 0.0 \\
v_3 & 0.136 & -0.136 & -0.273 & -0.441 & -0.598 & -0.750 & 0.0 & 0.0 \\
v_4 & 0.136 & -0.136 & -0.273 & -0.441 & -0.598 & -0.750 & -0.898 & 0.0
\end{array}
\]
where the column labels indicate the PEP basis directions associated with the coefficients of $v_k$ within each column.
Applying translation by the constant term $-c\sqnorm{x_0 - x_\star}$, and re-running the previous procedure, we observe that
\begin{align}
\label{eqn:ogm-without-z-lyapunov-skeleton}
    \widetilde V_k = a_k (f(x_k) - f(x_{\star})) + b_k \norm{ \nabla f(x_k) }^2 + \norm{ v_{k} }^2 
\end{align}
for $k=0,\dots,N-1$, and $v_{k+1} = v_k - \eta_k \nabla f(x_{k+1})$ also holds for all $k=0,\dots,N-2$, for some $\eta_k \in \reals$.
By construction, this sequence must satisfy the identity \eqref{eqn:ogm2-interior-identity}.

\paragraph{Step 5.}
Using \eqref{eqn:ogm2-interior-identity} with \eqref{eqn:ogm-without-z-lyapunov-skeleton}, we obtain the identity
\begin{align}
\label{eqn:ogm-without-z-system-of-equations}
\begin{aligned}
    & a_k (f(x_k) - f(x_{\star})) + b_k \norm{ \nabla f(x_k) }^2 + \norm{ v_{k} }^2 - a_{k+1} (f(x_{k+1}) - f(x_{\star})) - b_{k+1} \norm{ \nabla f(x_{k+1}) }^2 - \norm{ v_{k+1} }^2 \\
    & = - \frac{2\theta_k^2}{\theta_N^2}
    \left(
        f(x_{k+1})-f(x_k)
        +\inprod{\nabla f(x_{k+1})}{x_k-x_{k+1}}
        +\frac{1}{2L}\sqnorm{\nabla f(x_k)-\nabla f(x_{k+1})}
    \right) \\
    & \phantom{=}\, - \frac{2\theta_{k+1}}{\theta_N^2}
    \left(
        f(x_{k+1})-f(x_\star)
        +\inprod{\nabla f(x_{k+1})}{x_\star-x_{k+1}}
        +\frac{1}{2L}\sqnorm{\nabla f(x_{k+1})}
    \right)
\end{aligned}
\end{align}
for $k=0,\dots,N-2$, and we have the additional constraint $v_{k+1} = v_k - \eta_k \nabla f(x_{k+1})$.
Comparing the coefficients of $f(x_k)$, $f(x_{k+1})$, and $f(x_\star)$ in both sides gives
\[
    a_k - \frac{2\theta_k^2}{\theta_N^2} = 0,
    \qquad
    -a_{k+1} + \frac{2\theta_k^2}{\theta_N^2} + \frac{2\theta_{k+1}}{\theta_N^2} = 0,
    \qquad
    -a_k + a_{k+1} - \frac{2\theta_{k+1}}{\theta_N^2} = 0 ,
\]
which, using $\theta_{k+1}^2-\theta_{k+1}=\theta_k^2$, implies
\[
    a_k = \frac{2\theta_k^2}{\theta_N^2},
    \qquad
    a_{k+1}=\frac{2(\theta_k^2+\theta_{k+1})}{\theta_N^2}
    = \frac{2\theta_{k+1}^2}{\theta_N^2}.
\]
Next, comparing the coefficient of $\|\nabla f(x_k)\|^2$ in both sides, we immediately obtain $b_k = -\frac{\theta_k^2}{L\theta_N^2}$.

It remains to determine $\eta_k$ and $v_k$.
For consistency of the formula for $b_k$ across iterations, we set $b_{k+1}=-\frac{\theta_{k+1}^2}{L\theta_N^2}$.
Then, substituting the expressions for $a_k, b_k, a_{k+1}$ and $b_{k+1}$, together with the identity
\[
    \sqnorm{v_k}-\sqnorm{v_{k+1}}
    =
    2\eta_k\inprod{v_k}{\nabla f(x_{k+1})}
    -\eta_k^2\sqnorm{\nabla f(x_{k+1})}
\]
into \eqref{eqn:ogm-without-z-system-of-equations} and gathering all terms paired with $\nabla f(x_{k+1})$, we obtain
\begin{align}
\label{eqn:ogm-without-z-vector-equation}
    2\eta_k v_k
    + \frac{2\theta_k^2}{\theta_N^2}(x_k-x_{k+1})
    + \frac{2\theta_{k+1}}{\theta_N^2}(x_\star-x_{k+1})
    - \frac{2\theta_k^2}{L\theta_N^2}\nabla f(x_k)
    + \left(\frac{2\theta_{k+1}^2}{L\theta_N^2}-\eta_k^2\right)\nabla f(x_{k+1})
    = 0.
\end{align}
Since the completed vector $v_k$ belongs to $d(x_0-x_\star)+\mathrm{span}\{\nabla f(x_0),\dots,\nabla f(x_k)\}$, the coefficient of the new direction $\nabla f(x_{k+1})$ must vanish.
Therefore, we obtain $\eta_k = \sqrt{\frac{2}{L}}\frac{\theta_{k+1}}{\theta_N}$.
Then, \eqref{eqn:ogm-without-z-vector-equation} reduces to
\[
    2\eta_k v_k
    + \frac{2\theta_k^2}{\theta_N^2}(x_k-x_{k+1})
    + \frac{2\theta_{k+1}}{\theta_N^2}(x_\star-x_{k+1})
    - \frac{2\theta_k^2}{L\theta_N^2}\nabla f(x_k)
    =0,
\]
and hence
\[
    v_k
    =
    \sqrt{\frac{L}{2\theta_N^2}}
    \left(
        \theta_{k+1}x_{k+1}
        -(\theta_{k+1}-1)x_k
        + \frac{\theta_{k+1}-1}{L}\nabla f(x_k)
        -x_\star
    \right) .
\]
In the notation of Step~4, this identifies $d = \sqrt{\frac{L}{2\theta_N^2}}$.
We can then rewrite \eqref{eqn:ogm-without-z-lyapunov-skeleton} to conclude that
\begin{align*}
    \widetilde V_k = \frac{2\theta_k^2}{\theta_N^2} (f(x_k) - f(x_\star)) - \frac{\theta_k^2}{L\theta_N^2} \sqnorm{\nabla f(x_k)}
    + \frac{L}{2\theta_N^2} \sqnorm{
        \theta_{k+1}x_{k+1}
        -(\theta_{k+1}-1)x_k
        + \frac{\theta_{k+1}-1}{L}\nabla f(x_k)
        -x_\star
    }
\end{align*}
defined for $k=0,\dots,N-1$ is a Lyapunov function.
In fact, rearranging \eqref{alg:ogm2} yields
\[
    z_{k+1}
    =
    \theta_{k+1}x_{k+1}
    -(\theta_{k+1}-1)x_k
    +\frac{\theta_{k+1}-1}{L}\nabla f(x_k),
\]
so this coincides with the Lyapunov function found in \cref{proposition:ogm2}.
We include the Steps~1--4 above and the verification of $\widetilde V_k - \widetilde V_{k+1} \ge 0$ for $k=0,\dots,N-2$ using only the \ref{alg:ogm1} update rule in \path{examples_lyapunov/ogm/ogm_example_lyap_without_z.ipynb}.
One can then proceed in the same way for the last-step modification (identifying $\widetilde V_N$) and deriving the final convergence guarantee, which we omit here.

\subsubsection{OGM-G}
\label{section:ogm-g}

\textit{OGM-G} \citep{KimFessler2021_optimizing} is an algorithm designed for efficiently reducing the gradient norm of smooth convex functions with respect to the initial function value condition.
Define $\theta_k$ as in \eqref{eqn:ogm-theta-recurrence} (so it is the same sequence as in OGM), and define the reversed sequence $\psi_i = \theta_{N-i}$ for $i=0,\dots,N+1$. 
In particular, $\psi_{N+1} = \theta_{-1} = 0$.
The originally proposed form of OGM-G from \citep{KimFessler2021_optimizing} is given by
\begin{align*}
    y_{k+1} & = x_k - \frac{1}{L} \nabla f(x_k) \\
    x_{k+1} & = y_{k+1} + \frac{(\psi_k - 1)(2\psi_{k+1} - 1)}{\psi_k (2\psi_{k} - 1)} (y_{k+1} - y_k) + \frac{2\psi_{k+1} - 1}{2\psi_k - 1} (y_{k+1} - x_k) 
\end{align*}
for $k=0,\dots,N-1$.
This is similar to how \ref{alg:ogm1} was defined.
We rewrite it into a form similar to \ref{alg:ogm2}, which is a straightforward reformulation that results in the unique form: set $z_0 = x_0, z_1 = z_0 - \frac{\psi_0+1}{2L}\nabla f(x_0)$, and for $k=1,\dots,N$, define
\begin{equation}
\label{alg:ogm-g}
\tag{OGM-G}
    \begin{aligned}
        y_k & = x_{k-1} - \frac{1}{L}\nabla f(x_{k-1}),\\
        x_k & = \left( \frac{\psi_{k+1}}{\psi_k} \right)^4 y_k
        + \left( 1 - \left(\frac{\psi_{k+1}}{\psi_k}\right)^4 \right) z_k,\\
        z_{k+1} & = z_k - \frac{\psi_k}{L}\nabla f(x_k) .
    \end{aligned}
\end{equation}
Note that $\psi_{N+1} = 0$, so the second line implies $x_N = z_N$.
OGM-G exhibits the rate
\[
    \sqnorm{\nabla f(x_N)}
    \le \frac{2L}{\psi_0^2}\bigl(f(x_0)-f(x_\star)\bigr) .
\]
The PEP-style proof of this convergence takes the form
\begin{align*}
    \frac{1}{L}\sqnorm{\nabla f(x_N)}
    -\frac{2}{\psi_0^2}\bigl(f(x_0)-f(x_\star)\bigr) &
    =
    \sum_{i=0}^{N-1} \lambda_{i,i+1} \left( f(x_{i+1}) - f(x_i) + \inprod{\nabla f(x_{i+1})}{x_i - x_{i+1}} + \frac{1}{2L}\sqnorm{\nabla f(x_{i}) - \nabla f(x_{i+1})} \right) \\
    & \phantom{=} + \sum_{i=0}^{N-1} \lambda_{N,i} \left( f(x_i) - f(x_N) + \inprod{\nabla f(x_i)}{x_N - x_i} + \frac{1}{2L}\sqnorm{\nabla f(x_N) - \nabla f(x_i)} \right) \\
    & \phantom{=} + \lambda_{N,\star} \left( f(x_\star) - f(x_N) + \frac{1}{2L}\sqnorm{\nabla f(x_N)} \right) 
\end{align*}
where we specify $\lambda_{i,i+1} , \lambda_{N,j}$ for $i=0,\dots,N-1$ and $j=0,\dots,N-1,\star$ later.
In particular we have, $\cI = \{(i,i+1)\,|\,i=0,\dots,N-1\} \cup \{(N,i)\,|\,i=0,\dots,N-1,\star\}$, and because there are no norm square terms in the proof, $\cJ = \emptyset$.

\paragraph{Step 1.}
We set
\[
    \cI_0=\{(N,0)\},
    \qquad
    \cI_k
    =
    \{(i,i+1)\mid i=0,\dots,k-1\}
    \cup
    \{(N,i)\mid i=0,\dots,\min\{k,N-1\}\}
\]
for $k=1,\dots,N$, and set $\cJ_k=\emptyset$ for all $k$.
In particular, $\cI \setminus \cI_N = \{(N,\star)\}$.

\paragraph{Step 2.}
This choice is clearly sufficient.
We numerically analyze the matrices $\vV_k$ encoding the quadratic part of $V_k$, and observe that 
$\mathrm{rank}(\vV_k) = 4$ for $k=1,\dots,N-2$, with $\mathrm{rank}(V_0) = 3 = \mathrm{rank}(V_{N-1})$ and
$\mathrm{rank}(V_N)=1$ being the exceptions. 
The function-value support $\vf \odot \vb_k$ is also sparse: it only involves $f(x_0)$, $f(x_k)$, and $f(x_N)$.

\paragraph{Step 3.}
We construct the set of special vectors as
\begin{align*}
    \cT
    =
    \{\vx_0,\dots,\vx_N,\vx_\star,\vg_0,\dots,\vg_N,
      \vy_1,\dots,\vy_N,\vz_1,\dots,\vz_{N+1}\},
    \qquad
    \cS = \cT \cup \{\vu-\vv \,|\, \vu,\vv\in\cT\},
\end{align*}
where $\vx_i$, $\vg_i$, $\vy_i$, and $\vz_i$ represent
$x_i$, $\nabla f(x_i)$, $y_i$, and $z_i$, respectively.

\paragraph{Step 4.}
For $i=1,\dots,N-2$, we find that
\[
    \{\vg_i,\vg_N,\vx_{i+1}-\vy_{i+1},\vz_{i+1}-\vz_{N+1}\}
    \subset \cS\cap\cR(\vV_i)
\]
is a basis of $\cR(\vV_i)$. 
At $i=0$ and $i=N-1$, these four vectors become linearly dependent and the $\mathrm{rank}(\vV_i)$ drops to 3. 

With this information and numerical observations, we hypothesize a Lyapunov function of the form
\begin{align}
\label{eqn:ogm-g-lyapunov-skeleton}
    V_i
    &=
    a_i\bigl(f(x_i)-f(x_N)\bigr)
    +b_i \bigl(f(x_0)-f(x_N)\bigr)
    +c_i \sqnorm{\nabla f(x_i)}
    +d_i \sqnorm{\nabla f(x_N)}
    +e_i \inprod{x_{i+1}-y_{i+1}}{z_{i+1}-z_{N+1}}
\end{align}
for $i=0,\dots,N-1$.
We handle the final term $V_N$ separately below.

\paragraph{Step 5.}
We first specify the analytic forms of $\lambda$ as $\lambda_{i,i+1} = \frac{1}{\psi_{i+1}^2}$ for $i=0,\dots,N-1$, 
and
\[
    \lambda_{N,0}
    =
    \frac{1}{\psi_1^2}-\frac{2}{\psi_0^2},
    \qquad
    \lambda_{N,i}
    =
    \frac{1}{\psi_{i+1}^2}-\frac{1}{\psi_i^2}
    \quad (i=1,\dots,N-1),
    \qquad
    \lambda_{N,\star}
    =
    \frac{2}{\psi_0^2}.
\]
We plug \eqref{eqn:ogm-g-lyapunov-skeleton} into
\begin{align}
\label{eqn:ogm-g-consecutive-difference}
\begin{aligned}
    0
    =
    V_{i+1}-V_i
    & -\frac{1}{\psi_{i+1}^2} \left( f(x_{i+1}) - f(x_i) + \inprod{\nabla f(x_{i+1})}{x_i - x_{i+1}} + \frac{1}{2L}\sqnorm{\nabla f(x_i) - \nabla f(x_{i+1})}\right) \\
    & -\left(\frac{1}{\psi_{i+2}^2}-\frac{1}{\psi_{i+1}^2}\right)
    \left( f(x_{i+1}) - f(x_N) + \inprod{\nabla f(x_{i+1})}{x_N - x_{i+1}} + \frac{1}{2L}\sqnorm{\nabla f(x_{i+1}) - \nabla f(x_N)} \right) 
\end{aligned}    
\end{align}
for $i=0,\dots,N-2$ to solve for the coefficients. 
Using the update rule \eqref{alg:ogm-g} and the $\psi$-recurrence $\psi_0^2-\psi_0=2\psi_1^2$ and $\psi_i^2-\psi_i=\psi_{i+1}^2$ for $i=1,\dots,N$, we obtain
\begin{align*}
    a_i = \frac{1}{\psi_{i+1}^2} , \qquad b_i \equiv -\frac{2}{\psi_0^2} , \qquad c_i = -\frac{1}{2L\psi_{i+1}^2} , \qquad 
    d_i = \frac{\psi_0^2-2\psi_{i+1}^2} {2L\psi_0^2\psi_{i+1}^2} , \qquad e_i = \frac{L}{\psi_{i+1}^2(2\psi_{i+1}-1)}
\end{align*}
for $i=0,\dots,N-1$.
In particular, $b_i$ is constant across $i$, so we can remove it from \eqref{eqn:ogm-g-lyapunov-skeleton} and consider the translated Lyapunov function
\begin{align}
\label{eqn:ogm-g-lyapunov}
    \widetilde V_k = \frac{1}{\psi_{k+1}^2}\bigl(f(x_k)-f(x_N)\bigr)
    -\frac{1}{2L\psi_{k+1}^2}\sqnorm{\nabla f(x_k)}
    +\frac{\psi_0^2-2\psi_{k+1}^2}
    {2L\psi_0^2\psi_{k+1}^2}\sqnorm{\nabla f(x_N)} 
    + \frac{L}{\psi_{k+1}^2(2\psi_{k+1}-1)}
    \inprod{x_{k+1}-y_{k+1}}{z_{k+1}-z_{N+1}} 
\end{align}
for $k=0,\dots,N-1$.
Finally, we can separately verify that
\begin{align}
\label{eqn:ogm-g-consecutive-difference-last-step}
    \widetilde{V}_N = \widetilde V_{N-1} + \frac{1}{\psi_N^2} \left( f(x_{N}) - f(x_{N-1}) + \inprod{\nabla f(x_N)}{x_{N-1} - x_N} + \frac{1}{2L}\sqnorm{\nabla f(x_{N-1}) - \nabla f(x_N)} \right)
\end{align}
simplifies to $\frac{\psi_0^2-1}{L\psi_0^2}\sqnorm{\nabla f(x_N)}$.
Summarizing the result, we obtain the following: 

\begin{proposition}
\label{proposition:ogm-g}
Let $f\colon\reals^d\to\reals$ be convex and $L$-smooth, let $x_\star$ be a
minimizer of $f$, and let $x_k,y_k,z_k$ be generated by \eqref{alg:ogm-g}.
Define $\widetilde{V}_k$ as in \eqref{eqn:ogm-g-lyapunov}, and let $\widetilde V_N = \frac{\psi_0^2-1}{L\psi_0^2}\sqnorm{\nabla f(x_N)}$.
Then, \eqref{eqn:ogm-g-consecutive-difference} for $k=0,\dots,N-2$ and \eqref{eqn:ogm-g-consecutive-difference-last-step} hold, 
and
\begin{align*}
    \widetilde{V}_0 & = \frac{2}{\psi_0^2} (f(x_0) - f(x_N)) + \left( \frac{1}{\psi_1^2} - \frac{2}{\psi_0^2} \right)  \left( f(x_0) - f(x_N) + \inprod{\nabla f(x_0)}{x_N - x_0} + \frac{1}{2L}\sqnorm{\nabla f(x_0) - \nabla f(x_N)} \right) \\
    & \le \frac{2}{\psi_0^2} (f(x_0) - f(x_N)) .
\end{align*}
This establishes $\widetilde{V}_N \le \cdots \le \widetilde{V}_0 \le \frac{2}{\psi_0^2} (f(x_0) - f(x_N))$, and consequently,
\begin{align*}
    & \frac{1}{L}\sqnorm{\nabla f(x_N)} 
    -\frac{2}{\psi_0^2}\bigl(f(x_0)-f(x_\star)\bigr) = \widetilde{V}_N - \frac{2}{\psi_0^2} (f(x_0) - f(x_N))
    + \frac{2}{\psi_0^2} \left( f(x_\star) - f(x_N) + \frac{1}{2L}\sqnorm{\nabla f(x_N)} \right) 
    \le 0 .
\end{align*}
\end{proposition}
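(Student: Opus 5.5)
The plan is to follow the template of the earlier propositions (APPM/OHM, \cref{proposition:oc-halpern}, \cref{proposition:ogm2}) and split the argument into three parts. The interior identities \eqref{eqn:ogm-g-consecutive-difference} for $k=0,\dots,N-2$ come first. The terminal identity \eqref{eqn:ogm-g-consecutive-difference-last-step} comes second. The evaluation of $\widetilde V_0$ and the final assembly come last.

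For the interior identities I will express all vectors in the PEP basis $\{x_0, x_\star, \nabla f(x_0),\dots,\nabla f(x_N)\}$ using \eqref{alg:ogm-g}. The point is that $x_{k+1}-y_{k+1} = (1-(\psi_{k+2}/\psi_{k+1})^4)(z_{k+1}-y_{k+1})$ and $z_{k+1}-z_{N+1} = \sum_{j=k+1}^{N}\frac{\psi_j}{L}\nabla f(x_j)$. The remaining piece of the inner product, $z_{k+1}-y_{k+1}$, must then be written recursively: from $y_{k+1}=x_k-\frac1L\nabla f(x_k)$ and $x_k$ being a convex combination of $y_k,z_k$, we get $z_{k+1}-y_{k+1} = (\psi_{k+1}/\psi_k)^4(z_k-y_k) - \frac{\psi_k-1}{L}\nabla f(x_k)$.

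The function-value coefficients match immediately from $a_k=\psi_{k+1}^{-2}$ and $\lambda$ telescoping. For the quadratic part, I will compare coefficients of each monomial $\inprod{\nabla f(x_i)}{\nabla f(x_j)}$ and of the cross terms with $z_k-y_k$. This reduces to scalar identities in the $\psi$'s, which are closed using $\psi_i^2-\psi_i=\psi_{i+1}^2$ (and $\psi_0^2-\psi_0=2\psi_1^2$ at the start). This verification, like the others in the paper, will be carried out symbolically in the accompanying notebook via \texttt{PEPFlow}/\texttt{sympy}. \textbf{Main obstacle:} the vector $z_{k+1}-y_{k+1}$ is not local, since it carries the whole history. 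I plan to handle it by keeping it as one abstract vector $w_k$, with the single recurrence above. The identity should then close with only $w_k$, $\nabla f(x_k)$, $\nabla f(x_{k+1})$ and the tail sum $z_{k+2}-z_{N+1}$ appearing. Checking that the tail-sum cross terms cancel exactly is where the fourth-power weights and the $\psi$-recurrence must interact, and I expect most of the algebra to be there.

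For the last step, $\psi_{N+1}=0$ gives $x_N=z_N$, and $z_N-z_{N+1}=\frac{\psi_N}{L}\nabla f(x_N)$ with $\psi_N=\theta_0=1$. Substituting into $\widetilde V_{N-1}$ plus the single remaining inequality, the $f$ terms cancel. The inner product terms in $\nabla f(x_{N-1})$ should cancel against $x_{N-1}-x_N$, leaving $\frac{\psi_0^2-1}{L\psi_0^2}\sqnorm{\nabla f(x_N)}$. For $\widetilde V_0$, I will use $\psi_1^{-2}-2\psi_0^{-2}=\lambda_{N,0}\ge 0$ and the explicit forms $z_1 = x_0-\frac{\psi_0+1}{2L}\nabla f(x_0)$ and $x_1-y_1$. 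The goal is to rewrite $\widetilde V_0$ as $\frac{2}{\psi_0^2}(f(x_0)-f(x_N))$ plus $\lambda_{N,0}$ times the $(N,0)$ cocoercivity expression, which is $\le 0$. This is again a coefficient match (symbolic), and the claimed bound follows.

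Finally, the chain $\widetilde V_N\le\cdots\le\widetilde V_0$ holds because each difference equals a nonnegatively weighted cocoercivity expression; this needs $\lambda_{N,i}=\psi_{i+1}^{-2}-\psi_i^{-2}\ge 0$, true since $\psi$ is decreasing. Adding $\frac{2}{\psi_0^2}\bigl(f(x_\star)-f(x_N)+\frac1{2L}\sqnorm{\nabla f(x_N)}\bigr)\le 0$ and using $\frac{\psi_0^2-1}{L\psi_0^2}+\frac{1}{L\psi_0^2}=\frac1L$ then gives the displayed final identity and inequality.
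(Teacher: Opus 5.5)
Your proposal is correct and follows the same route as the paper's proof. The paper also verifies the interior identities, the terminal identity and the expression for $\widetilde V_0$ symbolically, and then assembles the bound using $\lambda_{N,i}\ge 0$ together with the cocoercivity inequality between $x_N$ and $x_\star$.

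Your reduction via $w_k = z_{k+1}-y_{k+1}$ and its one-step recurrence does close the interior identity by hand: the $\inprod{w_k}{z_{k+2}-z_{N+1}}$ terms cancel using $\psi_{k+1}^2-\psi_{k+1}=\psi_{k+2}^2$. One correction to your list of surviving vectors: $\nabla f(x_N)$ must also be kept as a separate direction, since it enters through $x_N-x_{k+1}$ and the $\sqnorm{\nabla f(x_N)}$ terms.
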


\begin{proof}
The symbolic verification of the displayed identities is provided in \path{examples_lyapunov/ogm_g/ogm_g_example_lyap.ipynb} within the codebase. 
This immediately implies $\widetilde{V}_N \le \cdots \le \widetilde{V}_0$ and $\widetilde V_0 \le \frac{2}{\psi_0^2} (f(x_0) - f(x_N))$,
and then we easily see that the following identity holds:
\begin{align*}
    \frac{1}{L}\sqnorm{\nabla f(x_N)} 
    -\frac{2}{\psi_0^2}\bigl(f(x_0)-f(x_\star)\bigr) = \underbrace{\widetilde{V}_N - \frac{2}{\psi_0^2} (f(x_0) - f(x_N))}_{\le 0} + \underbrace{\frac{2}{\psi_0^2} \left( f(x_\star) - f(x_N) + \frac{1}{2L}\sqnorm{\nabla f(x_N)} \right)}_{\le 0}
\end{align*}
which shows that the right hand side is at most $0$, completing the proof.
\end{proof}

\paragraph{Remark.}
The original proof of OGM-G in \citep{KimFessler2021_optimizing} was
presented in the PEP style. It was later reformulated into a simpler
potential-function argument by \citep{DiakonikolasWang2022_potential}, although
their analysis did not explicitly establish a single nonincreasing quantity. 
To the best of our knowledge, the first Lyapunov-style proof in that sense, 
essentially equivalent to our \cref{proposition:ogm-g}, was given in \citep{LeeParkRyu2021_geometric}.

\subsection{Smooth minimax optimization}

Consider the setting of \cref{section:minimax-optimization}, where we solve minimax optimization problem with respect to convex-concave and $L$-smooth $\phi\colon \reals^{d_u} \times \reals^{d_v} \to \reals$ with a minimax optimum (saddle point) $x_\star = (u_\star, v_\star)$.
This can be recast into the monotone inclusion problem with respect to monotone and $L$-Lipschitz operator by considering the saddle gradient operator $\opA$ defined by \eqref{eqn:saddle-operator}.

\subsubsection{Fast extragradient}
\label{section:feg}

The \textit{fast extragradient (FEG)} algorithm\footnote{In \citep{LeeKim2021_fast}, the FEG algorithm was designed for structured nonconvex-nonconcave minimax optimization problems whose saddle gradient operator is $L$-Lipschitz and $\rho$-\textit{comonotone}, i.e., $\inprod{\opA x - \opA y}{x-y} \ge \rho \sqnorm{\opA x - \opA y}$ for all $x,y\in\reals^d$ with some $\rho \le 0$.
Here, we consider only its special case $\rho=0$, where $\opA$ is monotone.} with $\rho=0$ \citep{LeeKim2021_fast} generates, for $k=0,\dots,N-1$,
\begin{align}
\begin{aligned}
    x_{k+\frac12} &= x_k+\frac{1}{k+1}(x_0-x_k)-\frac{k}{k+1}\frac{1}{L}\opA x_k \\
    x_{k+1} &= x_k+\frac{1}{k+1}(x_0-x_k)-\frac{1}{L}\opA x_{k+\frac12} .
\end{aligned}
\label{alg:feg}
\tag{FEG}
\end{align}
The first half-step gives $x_{\frac12}=x_0$.
FEG has the rate $\sqnorm{\opA x_N} \le \frac{4L^2}{N^2}\sqnorm{x_0-x_\star}$, 
whose PEP-style proof takes the form
\begin{align}
\begin{aligned}
    \sqnorm{\opA x_N}-\frac{4L^2}{N^2}\sqnorm{x_0-x_\star} 
    &= -\sum_{j=1}^{N-1}\lambda^\cM_{j,j+1}\cM(x_j,x_{j+1})
    -\lambda^\cM_{N,\star}\cM(x_N,x_\star)
    -\sum_{j=0}^{N-1}\lambda^\cL_{j+\frac12,j+1}\cL(x_{j+\frac12},x_{j+1}) \\
    &\quad
    -\sqnorm{\frac{2L}{N}(x_0-x_\star)-\opA x_N} 
\end{aligned}
\label{eq:feg_full_pep_certificate}
\end{align}
where we write
\[
    \cM(x,y) := \inprod{x-y}{\opA x-\opA y} \ge 0,
    \qquad
    \cL(x,y) := L^2\sqnorm{x-y} - \sqnorm{\opA x - \opA y} \ge 0
\]
for the monotonicity and Lipschitz inequalities. 
The nonnegative weights $\lambda^\cM_{j,j+1}$, $\lambda^\cM_{N,\star}$, and $\lambda^\cL_{j+\frac12,j+1}$ are specified later.
In this case, the active monotonicity and Lipschitz inequalities and sum of squares terms are
\[
    \begin{aligned}
    \cI^\cM = \{(j,j+1)\,|\,j=1,\dots,N-1\}\cup\{(N,\star)\},    
    \qquad
    \cI^\cL = \left\{\left(j+\frac12,j+1\right)\,\middle|\,j=0,\dots,N-1 \right\},
    \qquad
    |\cJ|=1.
    \end{aligned}
\]

\paragraph{Step 1.}
Take $\cI_0^\cM=\cI_0^\cL=\cJ_0=\emptyset$ and 
\[
    \cI_k^\cM = \{(j,j+1)\,|\,j=1,\dots,k-1\},
    \qquad
    \cI_k^\cL = \left\{\left(j+\frac12, j+1\right)\,\middle|\,j=0,\dots,k-1 \right\},
    \qquad
    \cJ_k=\emptyset .
\]
With this choice, we have $|\cI^\cM \setminus \cI^\cM_N| = |\cJ \setminus \cJ_N| = 1$ and $|\cI^\cL \setminus \cI^\cL_k| = 0$.

\paragraph{Step 2.}
This choice satisfies sufficiency, with the remaining terms in the proof being only the monotonicity inequality $\cM(x_N,x_\star)$ and the single square term in \eqref{eq:feg_full_pep_certificate}.
Numerically, $\vV_0=0$, while $\vV_k$ has rank $2$ for $k=1,\dots,N$.

\paragraph{Step 3.}
We construct the set of special vectors as
\[
    \cT=\{\vx_j,\vx_{j+\frac12},\vg_j,\vg_{j+\frac12} \,|\, j=0,\dots,N-1\}
    \cup\{\vx_N,\vx_\star,\vg_N\},
    \qquad
    \cS=\cT\cup\{\vu-\vv \,|\, \vu,\vv\in\cT\},
\]
where $\vx_t$ and $\vg_t$ respectively represent $x_t$ and $\opA x_t$ for $t=0,\frac{1}{2},1,\dots,N$.

\paragraph{Step 4.}
For $k=1,\dots,N$, we identify $\{\vg_k,\vx_0-\vx_k\}\subset \cS\cap\cR(\vV_k)$ as a basis of $\cR(\vV_k)$.
Hence, by \cref{proposition:linear-algebra-of-lyapunov}, we search for a Lyapunov function of the form
\begin{align*}
    V_k
    =
    a_k\sqnorm{\opA x_k}
    +b_k\inprod{\opA x_k}{x_0-x_k}
    +c_k\sqnorm{x_0-x_k}
\end{align*}
for $k=1,\dots,N$, while we quickly observe numerically that $c_k \equiv 0$.

\paragraph{Step 5.}
We substitute the Lyapunov skeleton form into the identity
\begin{align*}
    V_k-V_{k+1}
    =
    \lambda^\cM_{k,k+1}\cM(x_k,x_{k+1})
    +\lambda^\cL_{k+\frac12,k+1}\cL(x_{k+\frac12},x_{k+1})
\end{align*}
for $k=1,\dots,N-1$ and use the analytic forms $\lambda^\cM_{k,k+1} = \frac{4Lk(k+1)}{N^2}$ and $\lambda^\cL_{k+\frac12,k+1} = \frac{2(k+1)^2}{N^2}$, together with the update rule of \ref{alg:feg} and the numerical observation $c_k=c_{k+1}=0$.
Then, solving the identity for the coefficients, we obtain
\[
    a_k=\frac{2k^2}{N^2},
    \qquad
    b_k=-\frac{4Lk}{N^2},
    \qquad
    a_{k+1}=\frac{2(k+1)^2}{N^2},
    \qquad
    b_{k+1}=-\frac{4L(k+1)}{N^2},
\]
i.e., the Lyapunov function is, for $k=0,\dots,N$,
\begin{align}
\label{eqn:feg-lyapunov}
    V_k=\frac{2k^2}{N^2}\sqnorm{\opA x_k}
    -\frac{4Lk}{N^2}\inprod{\opA x_k}{x_0-x_k} .
\end{align}
Summarizing the above outcome, we obtain the following result.

\begin{proposition}
\label{proposition:feg}
Let $\opA\colon\reals^d\to\reals^d$ be monotone and $L$-Lipschitz, and let $\opA x_\star=0$.
Let $x_k$ be generated by \eqref{alg:feg}.
Define $V_k$ by \eqref{eqn:feg-lyapunov} for $k=0,\dots,N$. Then for $k=0,\dots,N-1$,
\begin{align}
\label{eqn:feg-step-identity}
    V_k-V_{k+1}
    =
    \frac{4Lk(k+1)}{N^2}\cM(x_k,x_{k+1})
    +\frac{2(k+1)^2}{N^2}\cL(x_{k+\frac12},x_{k+1}).
\end{align}
Consequently, $V_N\le \cdots \le V_0=0$, and this implies
\[
    \sqnorm{\opA x_N}\le\frac{4L^2}{N^2}\sqnorm{x_0-x_\star}.
\]
\end{proposition}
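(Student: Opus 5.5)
Since every quantity involved is a quadratic form in the iterates and operator values, I would prove the step identity \eqref{eqn:feg-step-identity} by direct algebraic verification, and then telescope. Write $g_t=\opA x_t$. From \eqref{alg:feg} I will use two relations. The first is
\[
x_{k+1}-x_{k+\frac12} = \tfrac{1}{L}\bigl(\tfrac{k}{k+1} g_k - g_{k+\frac12}\bigr).
\]
The second is
\[
x_0-x_{k+1} = \tfrac{k}{k+1}(x_0-x_k)+\tfrac1L g_{k+\frac12},
\]
which equivalently gives $x_{k+1}-x_k=\tfrac{1}{k+1}(x_0-x_k)-\tfrac1L g_{k+\frac12}$. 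These relations let me express every vector as a combination of $x_0-x_k$, $g_k$, $g_{k+\frac12}$, $g_{k+1}$, which are free quantities. The identity then holds if and only if the coefficients of all pairwise inner products among these four vectors agree on both sides.

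The verification proceeds term by term. The left side $V_k-V_{k+1}$ is
\[
\tfrac{2k^2}{N^2}\|g_k\|^2-\tfrac{4Lk}{N^2}\langle g_k,x_0-x_k\rangle-\tfrac{2(k+1)^2}{N^2}\|g_{k+1}\|^2+\tfrac{4L(k+1)}{N^2}\langle g_{k+1},x_0-x_{k+1}\rangle,
\]
and I substitute the expression for $x_0-x_{k+1}$ in the last term. On the right side, $\cM(x_k,x_{k+1})=\langle x_k-x_{k+1},g_k-g_{k+1}\rangle$ with the step substituted. For $\cL(x_{k+\frac12},x_{k+1})$, the term $L^2\|x_{k+\frac12}-x_{k+1}\|^2$ becomes $\|\tfrac{k}{k+1}g_k-g_{k+\frac12}\|^2$, and the term $-\|g_{k+\frac12}-g_{k+1}\|^2$ is kept as is. I then multiply everything by $N^2/2$ and match coefficients.

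The hard part is the $\|g_{k+\frac12}\|^2$ and $\langle g_{k+\frac12},\cdot\rangle$ terms, which must cancel. The coefficient of $\|g_{k+\frac12}\|^2$ is $(k+1)^2-(k+1)^2=0$ from $\cL$. The cross term $\langle g_{k+\frac12},g_{k+1}\rangle$ collects $2(k+1)$ from the left and $2k(k+1)/1\cdot(1/L)\cdot L\ldots$ from the right; these match after using $\lambda^\cM L^{-1}=\tfrac{4k(k+1)}{N^2}$. Similarly, the cross term $\langle g_k,g_{k+\frac12}\rangle$ obtains $-2k(k+1)$ from $\cL$ and $+2k(k+1)$ from $\cM$. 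The remaining coefficients, namely those of $\langle g_k,x_0-x_k\rangle$, $\langle g_{k+1},x_0-x_k\rangle$, $\|g_k\|^2$, $\|g_{k+1}\|^2$ and $\langle g_k,g_{k+1}\rangle$, are each a one-line check; for instance $\|g_k\|^2$ gives $k^2=k^2$. I would record the check in sympy, as in the other propositions.

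For the boundary case $k=0$, $\lambda^\cM_{0,1}=0$ and $x_{\frac12}=x_0$, so the identity reads $-V_1=\tfrac{2}{N^2}\cL(x_0,x_1)$; this is covered by the same computation with $k=0$. Telescoping then gives $V_N\le\cdots\le V_0=0$ by monotonicity and Lipschitzness.

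To conclude, I add $-\tfrac{4L}{N}\cM(x_N,x_\star)-\|\tfrac{2L}{N}(x_0-x_\star)-\opA x_N\|^2\le 0$ to $V_N$. Expanding, $V_N=2\|g_N\|^2-\tfrac{4L}{N}\langle g_N,x_0-x_N\rangle$, and the inner-product terms combine to $-\tfrac{4L}{N}\langle g_N,x_0-x_\star\rangle$. This cancels against the cross term of the square, leaving $\|g_N\|^2-\tfrac{4L^2}{N^2}\|x_0-x_\star\|^2\le V_N\le 0$, which is the claimed rate.
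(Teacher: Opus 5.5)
Your proposal is correct and follows the paper's route: establish \eqref{eqn:feg-step-identity} as a formal identity by substituting the \eqref{alg:feg} updates and matching coefficients (the paper delegates exactly this to \texttt{sympy}), telescope using $\cM,\cL\ge 0$, and close with the same terminal identity $\sqnorm{\opA x_N}-\frac{4L^2}{N^2}\sqnorm{x_0-x_\star}=V_N-\frac{4L}{N}\cM(x_N,x_\star)-\sqnorm{\opA x_N-\frac{2L}{N}(x_0-x_\star)}$. Only your sentence on the $\langle g_{k+\frac12},g_{k+1}\rangle$ coefficient is garbled: after scaling by $N^2/2$, the right side gets $-2k(k+1)$ from $\cM$ and $+2(k+1)^2$ from $\cL$, and these sum to the $2(k+1)$ on the left, so the identity does hold.
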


\begin{proof}
The identity \eqref{eqn:feg-step-identity} is verified symbolically in the associated FEG notebook \path{fast_extragradient_example_lyap.ipynb} within the codebase.
Since $\cM\ge0$ and $\cL\ge0$ for any pair of iterates, this identity immediately establishes $V_N\le \cdots \le V_0=0$.
Once this is established, it is straightforward to verify that
\begin{align*}
    \sqnorm{\opA x_N}
    -\frac{4L^2}{N^2}\sqnorm{x_0-x_\star}
    =
    V_N-\frac{4L}{N}\cM(x_N,x_\star)
    -\sqnorm{\opA x_N-\frac{2L}{N}(x_0-x_\star)} \le V_N \le V_0 = 0 ,
\end{align*}
which compeletes the proof.
\end{proof}

\subsubsection{Dual fast extragradient}
\label{section:dual-feg}

We next consider the H-dual algorithm of \ref{alg:feg}, the \textit{dual fast extragradient (Dual-FEG)} \cite{YoonKimSuhRyu2024_optimal},
which has a terminal horizon $N$ and the update rule
\begin{align}
\begin{aligned}
    x_{k+\frac12} &= x_k- \frac{1}{L} z_k - \frac{1}{L} \opA x_k, \\
    x_{k+1} &= x_{k+\frac12}-\frac{N-k-1}{N-k} \frac{1}{L} \left(\opA x_{k+\frac12} - \opA x_k\right), \\
    z_{k+1} &= \frac{N-k-1}{N-k}z_k - \frac{1}{N-k}\opA x_{k+\frac12}    
\end{aligned}
\label{alg:dual-feg}
\tag{Dual-FEG}
\end{align}
for $k=0,\dots,N-1$, where $z_0 = 0$.
Dual-FEG has the rate $\sqnorm{\opA x_N}\le \frac{4L^2\sqnorm{x_0-x_\star}}{N^2}$,
whose PEP-style proof takes the form
\begin{align}
\begin{aligned}
    \sqnorm{\opA x_N}-\frac{4L^2}{N^2}\sqnorm{x_0-x_\star}
    &= -\sum_{j=0}^{N-2}\lambda^\cM_{j+\frac12,N}\cM(x_{j+\frac12},x_N)
    -\lambda^\cM_{N,\star}\cM(x_N,x_\star) \\
    &\quad
    -\sum_{j=0}^{N-2}\lambda^\cL_{j,j+\frac12}\cL(x_j,x_{j+\frac12})
    -\lambda^\cL_{N-1,N}\cL(x_{N-1},x_N) 
    -\sqnorm{\opA x_N-\frac{2L}{N}(x_0-x_\star)} .
\end{aligned}
\label{eq:dual_feg_full_pep_certificate}
\end{align}
The nonnegative weights $\lambda^\cM_{j+\frac12,N}$, $\lambda^\cM_{N,\star}$, $\lambda^\cL_{j,j+\frac12}$, and $\lambda^\cL_{N-1,N}$ are specified later.
In this case, the active monotonicity and Lipschitz inequalities are
\[
    \begin{aligned}
        \cI^\cM = \left\{ \left(j+\frac12,N \right)\,\middle|\,j=0,\dots,N-2 \right\}\cup\{(N,\star) \} ,
        \quad
        \cI^\cL = \left\{\left( j, j+\frac12 \right)\,\middle|\,j=0,\dots,N-2 \right\}\cup\{(N-1,N) \} ,
        \quad
        |\cJ| = 1 .
    \end{aligned}
\]

\paragraph{Step 1.}
Take $\cI_0^\cM=\cI_0^\cL=\cJ_0=\emptyset$ and, for $k=1,\dots,N-1$,
\[
    \cI_k^\cM = \left\{ \left(j+\frac12,N \right)\,\middle|\, j=0,\dots,k-1 \right\},
    \qquad
    \cI_k^\cL = \left\{\left( j, j+\frac12 \right)\,\middle|\, j=0,\dots,k-1 \right\},
    \qquad
    \cJ_k=\emptyset .
\]
At the terminal iteration, take
\[
    \cI_N^\cM = \cI_{N-1}^\cM\cup\{(N,\star)\},
    \qquad
    \cI_N^\cL=\cI_{N-1}^\cL\cup\{(N-1,N)\},
    \qquad
    \cJ_N=\emptyset .
\]

\paragraph{Step 2.}
This choice satisfies sufficiency, with the remaining term in the proof being the single square term in \eqref{eq:dual_feg_full_pep_certificate}.
Numerically, $\vV_0=0$, while $\vV_k$ has rank $4$ for $k=1,\dots,N-1$ and $\vV_N$ has rank $2$.

\paragraph{Step 3.}
We construct the set of special vectors as
\[
    \cT=\{\vx_j,\vx_{j+\frac12},\vg_j,\vg_{j+\frac12} \,|\, j=0,\dots,N-1\}
    \cup\{\vx_N,\vx_\star,\vg_N,\vz_1,\dots,\vz_N\},
    \qquad
    \cS=\cT\cup\{\vu-\vv \,|\, \vu,\vv\in\cT\},
\]
where $\vx_t$ and $\vg_t$ respectively represent $x_t$ and $\opA x_t$ for $t=0,\frac{1}{2},\dots,N$ and $\vz_j$ represents $z_j$ for $j=1,\dots,N$.

\paragraph{Step 4.}
We identify $\{\vx_0-\vx_N,\vx_k-\vx_N,\vz_k,\vg_N\}\subset\cS\cap\cR(\vV_k)$ as a basis of $\cR(\vV_k)$ for $k=1,\dots,N-1$, and $\{\vx_0-\vx_\star,\vg_N\}\subset\cS\cap\cR(\vV_N)$ as a basis of $\cR(\vV_N)$.
Hence, using \cref{proposition:linear-algebra-of-lyapunov} and excluding the coefficients that are numerically observed to be zero, we search for a Lyapunov function of the form
\begin{align*}
    V_k
    =
    a_k\inprod{x_0-x_N}{\opA x_N}
    +b_k\inprod{x_k-x_N}{z_k}
    +c_k\inprod{x_k-x_N}{\opA x_N}
    +d_k\sqnorm{z_k}
    +e_k\inprod{z_k}{\opA x_N}
\end{align*}
for $k=1,\dots,N-1$.

\paragraph{Step 5.}
Substituting the above form into the identity
\begin{align*}
    V_k-V_{k+1}
    =
    \lambda^\cM_{k+\frac12,N}\cM(x_{k+\frac12},x_N)
    +\lambda^\cL_{k,k+\frac12}\cL(x_k,x_{k+\frac12})
\end{align*}
for $k=0,\dots,N-2$, and using the analytic forms $\lambda^\cM_{k+\frac12,N} = \frac{4L}{(N-k-1)(N-k)}$ and $\lambda^\cL_{k,k+\frac12} = \frac{2}{(N-k)^2}$, together with the update relation~\eqref{alg:dual-feg}, we can symbolically verify the solution
\[
    a_k=a_{k+1}=-\frac{4L}{N}, 
    \quad
    b_k=c_k=\frac{4L}{N-k}, 
    \quad
    b_{k+1}=c_{k+1}=\frac{4L}{N-k-1},
    \quad
    d_k=d_{k+1}=-2,
    \quad
    e_k=e_{k+1}=-4.
\]
Thus the Lyapunov function is
\begin{align}
\label{eqn:dual-feg-lyapunov}
    V_k
    =
    -\frac{4L}{N}\inprod{x_0-x_N}{\opA x_N}
    +\frac{4L}{N-k}\inprod{x_k-x_N}{z_k}
    +\frac{4L}{N-k}\inprod{x_k-x_N}{\opA x_N}
    -2\sqnorm{z_k}
    -4\inprod{z_k}{\opA x_N}
\end{align}
for $k=0,\dots,N-1$.
Finally, we can separately verify that
\[
    V_N = V_{N-1} - \frac{4L}{N}\cM(x_N,x_\star) - 2\cL(x_{N-1},x_N)
\]
simplifies to 
\begin{align}
\label{eqn:dual-feg-terminal-lyapunov}
    V_N
    =
    2\sqnorm{\opA x_N}
    -\frac{4L}{N}\inprod{\opA x_N}{x_0-x_\star}.
\end{align}
Summarizing the above, we obtain the following result.

\begin{proposition}
\label{proposition:dual-feg}
Let $\opA\colon\reals^d\to\reals^d$ be monotone and $L$-Lipschitz, and let $\opA x_\star=0$.
Let $x_k,z_k$ be generated by \eqref{alg:dual-feg}.
Define $V_k$ by \eqref{eqn:dual-feg-lyapunov} for $k=0,\dots,N-1$ and by \eqref{eqn:dual-feg-terminal-lyapunov} for $k=N$.
Then $V_0=0$, and we have 
\begin{align}
\label{eqn:dual-feg-step-identity}
    V_k-V_{k+1}
    =
    \frac{4L}{(N-k-1)(N-k)}\cM(x_{k+\frac12},x_N)
    +\frac{2}{(N-k)^2}\cL(x_k,x_{k+\frac12}).
\end{align}
for $k=0,\dots,N-2$ and 
\begin{align}
\label{eqn:dual-feg-terminal-identity}
    V_{N-1}-V_N
    =
    \frac{4L}{N}\cM(x_N,x_\star)
    +2\cL(x_{N-1},x_N).
\end{align}
Consequently, $V_N\le\cdots\le V_0=0$, and this implies
\[
    \sqnorm{\opA x_N}\le\frac{4L^2}{N^2}\sqnorm{x_0-x_\star}.
\]
\end{proposition}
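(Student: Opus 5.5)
The proof has three parts: check $V_0=0$, prove the two step identities by expanding everything in the free variables, and then close with a short telescoping argument as in \cref{proposition:feg}.

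\textbf{Step 1: $V_0=0$.} Since $z_0=0$, the terms $\frac{4L}{N}\inprod{x_0-x_N}{z_0}$, $-2\sqnorm{z_0}$ and $-4\inprod{z_0}{\opA x_N}$ vanish. The two remaining terms of \eqref{eqn:dual-feg-lyapunov} at $k=0$ are $-\frac{4L}{N}\inprod{x_0-x_N}{\opA x_N}+\frac{4L}{N}\inprod{x_0-x_N}{\opA x_N}$, which cancel.

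\textbf{Step 2: the step identities.} The main work is \eqref{eqn:dual-feg-step-identity} and \eqref{eqn:dual-feg-terminal-identity}. Both are formal identities in the free vectors $x_0$, $\opA x_0$, $\opA x_{\frac12}, \dots, \opA x_{N-\frac12}$, $\opA x_N$. Every other vector is an explicit linear combination of these through the \eqref{alg:dual-feg} recursion, and $x_N$ is itself such a combination.

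I would not expand everything globally. Instead I would write $x_{k+\frac12}$, $x_{k+1}$, $z_{k+1}$ in terms of the local quantities $x_k$, $z_k$, $\opA x_k$, $\opA x_{k+\frac12}$. I would keep $w:=x_N$ and $g:=\opA x_N$ as opaque vectors, since they enter only through inner products that are bilinear in the local quantities.

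Both sides of \eqref{eqn:dual-feg-step-identity} are then quadratic forms in $(x_k-x_N,\ z_k,\ \opA x_k,\ \opA x_{k+\frac12},\ \opA x_N)$; the term $\inprod{x_0-x_N}{\opA x_N}$ is common to $V_k$ and $V_{k+1}$ and cancels. Set $\delta=\frac{N-k-1}{N-k}$. Then:
\begin{itemize}
\item $x_{k+1}-x_N=(x_k-x_N)-\frac1L z_k-\frac1L\opA x_k-\frac{\delta}{L}(\opA x_{k+\frac12}-\opA x_k)$;
\item $z_{k+1}=\delta z_k-\frac{1}{N-k}\opA x_{k+\frac12}$;
\item $\cL(x_k,x_{k+\frac12})=\sqnorm{z_k+\opA x_k}-\sqnorm{\opA x_k-\opA x_{k+\frac12}}$;
\item $\cM(x_{k+\frac12},x_N)=\inprod{x_{k+\frac12}-x_N}{\opA x_{k+\frac12}-\opA x_N}$, with $x_{k+\frac12}-x_N=(x_k-x_N)-\frac1L(z_k+\opA x_k)$.
\end{itemize}
Substituting these, I would match coefficients of the roughly fifteen monomials; the coefficients of $V_k$ and $V_{k+1}$ are rational in $N-k$ with common denominator $(N-k)^2(N-k-1)$. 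The main obstacle is simply getting this bookkeeping right, in particular the cross terms with $\opA x_N$ and with $x_k-x_N$. I would delegate it to \texttt{sympy}, as in the other propositions, with a hand check of a few coefficients, e.g.\ that of $\inprod{x_k-x_N}{\opA x_{k+\frac12}}$, where $\frac{4L}{N-k}\cdot\frac{N-k}{N-k-1}$ must match $\lambda^\cM_{k+\frac12,N}$.

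For \eqref{eqn:dual-feg-terminal-identity} I would use $\delta=0$ at $k=N-1$. Then $x_N=x_{N-\frac12}$, so $\cL(x_{N-1},x_N)=\cL(x_{N-1},x_{N-\frac12})$, and $x_{N-1}-x_N=\frac1L(z_{N-1}+\opA x_{N-1})$. Substituting into $V_{N-1}$ and subtracting $\frac{4L}{N}\cM(x_N,x_\star)+2\cL$ should leave \eqref{eqn:dual-feg-terminal-lyapunov}, again as a direct expansion.

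\textbf{Step 3: conclusion.} By monotonicity and $L$-Lipschitzness, $\cM\ge0$ and $\cL\ge0$, so the identities give $V_N\le\dots\le V_0=0$. Then
\[
\sqnorm{\opA x_N}-\frac{4L^2}{N^2}\sqnorm{x_0-x_\star}=V_N-\sqnorm{\opA x_N-\frac{2L}{N}(x_0-x_\star)}\le V_N\le0,
\]
where the equality follows by expanding the square, exactly as in \cref{proposition:feg}.
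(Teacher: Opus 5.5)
Your proposal is correct and follows the paper's proof: the two identities are checked by symbolic expansion, and your local parametrization in $x_k-x_N$, $z_k$, $\opA x_k$, $\opA x_{k+\frac12}$, $\opA x_N$ does close as a formal identity, with the terminal step reducing exactly to \eqref{eqn:dual-feg-terminal-lyapunov}; this is followed by the same telescoping and square-completion. One small slip is in your sample hand check: the coefficient of $\inprod{x_k-x_N}{\opA x_{k+\frac12}}$ comes from the $-\frac{1}{N-k}\opA x_{k+\frac12}$ component of $z_{k+1}$ inside $\frac{4L}{N-k-1}\inprod{x_{k+1}-x_N}{z_{k+1}}$, which gives $\frac{4L}{N-k-1}\cdot\frac{1}{N-k}$, whereas your $\frac{4L}{N-k}\cdot\frac{N-k}{N-k-1}=\frac{4L}{N-k-1}$ would not match $\lambda^\cM_{k+\frac12,N}$.
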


\begin{proof}
The identities \eqref{eqn:dual-feg-step-identity} and \eqref{eqn:dual-feg-terminal-identity} are verified symbolically in the associated notebook \path{dual_feg_example_lyap.ipynb} within the codebase.
Since $\cM\ge0$ and $\cL\ge0$, these identities immediately establish $V_N\le\cdots\le V_0=0$, which implies
\begin{align*}
    \sqnorm{\opA x_N}
    -\frac{4L^2}{N^2}\sqnorm{x_0-x_\star}
    =
    V_N-\sqnorm{\opA x_N-\frac{2L}{N}(x_0-x_\star)} 
    \le V_N \le V_0 = 0 
\end{align*}
which is the desired convergence guarantee.
\end{proof}

%% file: sections/5_new_lyapunov_analyses.tex
\section{Discovering new analyses and algorithms via unified framework}
\label{section:new-lyapunov}

In this section, we utilize our framework to discover previously unknown Lyapunov-style proofs:
\begin{enumerate}
    \item We consider the smooth convex minimization problem and present the Lyapunov analysis corresponding to the tight convergence proof for the \emph{gradient descent (GD)} algorithm from \cite{DroriTeboulle2014_performance}.
    
    \item We consider the composite minimization problem with a nonsmooth proximable term and a Bregman-relatively-smooth term, and present a tight Lyapunov-style proof for the \emph{Bregman proximal gradient method (BPGM)} yielding the same rate as \cite{ZhouLiangShen2019_simple, GutmanPena2023_perturbed}.
    
    \item We consider the maximally monotone inclusion problem and present the Lyapunov-style proof corresponding to the tight convergence proof of the \emph{proximal point method (PPM)} from \cite{guTightSublinearConvergence2020}.
\end{enumerate}
The aforementioned results are new in the sense that the tight rates were previously known but had not been formally stated anywhere in the form of Lyapunov analysis, to the best of our knowledge.
Our final result, on the other hand, discovers a novel algorithm \emph{dual optimal contractive Halpern (Dual-OC-Halpern)}, equipped with a Lyapunov-style proof showing that it is another exact optimal algorithm for the equivalent settings of maximally $\mu$-strongly monotone inclusion (with proximal operations) and $\gamma$-contractive fixed-point problems.

\subsection{Gradient descent with constant step size}
\label{section:gd}

Let $f\colon \reals^d \to \reals$ be a convex, $L$-smooth function with a minimizer $x_\star$.
Consider the standard \emph{gradient descent} with step size $\frac{1}{L}$:
\begin{equation} \label{eq:gd} \tag{GD}
    x_{k+1} = x_k - \frac{1}{L} \nabla f(x_k).
\end{equation}
The tight PEP-style convergence proof for GD takes the form
\begin{equation*}
    \begin{aligned}
        f(x_N) - f(x_\star) - \frac{L}{4N+2} \|x_0 - x_\star\|^2
        &= \sum _{i=1}^{N} \lambda_{i-1,i} \left( f(x_{i})-f(x_{i-1}) + \langle \nabla f(x_{i}) , x_{i-1} - x_{i} \rangle + \frac{1}{2 L} \| \nabla f(x_{i-1}) - \nabla f(x_i) \|^2 \right)  \\ 
        &\phantom{=}+\sum _{i=0}^{N} \lambda_{\star,i} \left( f(x_i)-f(x_{\star}) + \langle \nabla f(x_{i}) , x_{\star} - x_{i} \rangle + \frac{1}{2 L} \| \nabla f(x_i) \|^2 \right) \\ &\phantom{=}
        - \begin{bmatrix} x_0 & x_\star & \nabla f(x_0) & \cdots & \nabla f(x_N) \end{bmatrix} \vS_N \begin{bmatrix} x_0 & x_\star & \nabla f(x_0) & \cdots & \nabla f(x_N) \end{bmatrix}^{\intercal} 
    \end{aligned}
\end{equation*}
where we specify $\lambda_{i-1,i}, \lambda_{\star,i} \ge 0$ and $\vS_N \succeq 0$ later.
To run our procedure, we need a decomposition of the positive semidefinite matrix $\vS_N \in \mathbb{S}^{N+3}_+$ into rank-$1$ matrices.
This can be done numerically by employing techniques from numerical linear algebra.
We propose one concrete strategy here, which we call the \emph{reverse-basis LDL decomposition}, where we find a unit $(N+3) \times (N+3)$ lower triangular matrix $\vL_N$ 
and a diagonal matrix $\vD$ such that
\begin{align*}
    & \begin{bmatrix} x_0 & x_\star & \nabla f(x_0) & \cdots & \nabla f(x_N) \end{bmatrix} \vS_N \begin{bmatrix} x_0 & x_\star & \nabla f(x_0) & \cdots & \nabla f(x_N) \end{bmatrix}^{\intercal} \\
    & = \begin{bmatrix} \nabla f(x_N) & \cdots & \nabla f(x_0) & x_\star & x_0 \end{bmatrix} 
    \vL_N \vD \vL_N^\transpose 
    \begin{bmatrix} \nabla f(x_N) & \cdots & \nabla f(x_0) & x_\star & x_0 \end{bmatrix}^{\intercal} . 
\end{align*}
Note that in the right hand side, the order of the PEP basis vectors is reversed to $\begin{bmatrix} \nabla f(x_N) & \cdots & \nabla f(x_0) & x_\star & x_0 \end{bmatrix}$.
This design lets the $i$-th term in the sum of squares decomposition depend only on the algorithm's history up to $i$-th iteration.
To make this point precise, we need to first note the two characteristics of $\vL_N$ and $\vD$ in the case of GD:
\begin{itemize}
    \item[(i)] $\vS_N$ has rank $N+1$, and consequently, $\vD$ has $N+1$ nonzero diagonal entries and the remaining two are zeros, precisely the last two entries.
    Hence, we can write $\vD = \mathrm{diag}(\alpha_N, \dots, \alpha_0, 0, 0)$.
    \item[(ii)] The first $N+1$ columns of $\vL_N$ are of the form $\bmat{* & \cdots & * & c & -c}^\transpose$, i.e., the sum of last two entries are zero.
\end{itemize}
Hence, writing $\vL_N = \begin{bmatrix} \mid & \cdots & \mid & \mid & \mid  \\ \boldsymbol{\ell}_N & \cdots & \boldsymbol{\ell}_0 & \boldsymbol{\ell}_{-1} & \boldsymbol{\ell}_{-2} \\ \mid & \cdots & \mid & \mid & \mid \end{bmatrix}$, we can write
\begin{align*}
    & \begin{bmatrix} x_0 & x_\star & \nabla f(x_0) & \cdots & \nabla f(x_N) \end{bmatrix} \vS_N \begin{bmatrix} x_0 & x_\star & \nabla f(x_0) & \cdots & \nabla f(x_N) \end{bmatrix}^{\intercal} = \sum_{i=0}^N \alpha_i \sqnorm{s_i}
\end{align*}
where $s_i = \begin{bmatrix} \nabla f(x_N) & \cdots & \nabla f(x_0) & x_\star & x_0 \end{bmatrix} \boldsymbol{\ell}_{i} \in \text{span}\set{x_0-x_\star, \nabla f(x_0), \dots, \nabla f(x_{i})}$ since $\vL_N$ is lower triangular.
We highlight again that the process of obtaining $\alpha_i$ and $s_i$ as above is purely numerical, and thus the steps 1--4 below are also performed numerically.

\paragraph{Step 1.} We have $\cI = \{(i-1,i)\,|\,i=1,\dots,N\} \cup \{(\star,i)\,|\,i=0,\dots,N\}$ and $\cJ = \{0,\dots,N\}$.
Let us naturally define 
\[
    \cI_k = \{(i-1,i)\,|\,i=1,\dots,k\} \cup \{(\star,i)\,|\,i=0,\dots,k\} , \quad \cJ_k = \{0,\dots,k\}
\]
for $k=0,\dots,N$. To clarify, in particular, $\cI_0 = \{(\star,0)\}$ and $\cJ_0 = \{0\}$.

\paragraph{Step 2.} We have $\cI_N = \cI$ and $\cJ_N = \cJ$, so sufficiency is immediate.
The rank computation of $\vV_k$ reveals the almost constant rank of $3$, except when $k=0$ and $k=N$.
Finally, the function value term $\vf \odot \vb_k$ associated with the partial sum given by Step~1 is sparse, identifying the simple term $a_k (f(x_k) - f(x_\star))$.

\paragraph{Step 3.} We construct the set of special vectors as 
\begin{align*}
    \cT = \{\vx_0, \dots, \vx_{N}, \vx_\star, \vg_0, \dots, \vg_N \} , \quad \cS = \cT \cup \{\vu - \vv: \vu, \vv \in \cT \} .
\end{align*}

\paragraph{Step 4.} We numerically find that $V_0 = 0$ and
\[
    \{\vx_0 - \vx_\star , \vg_k , \vx_{k+1} - \vx_\star \} \subset \cS \cap \cR(\vV_k)
\]
for $k=1,\dots,N-1$.
Using \cref{proposition:linear-algebra-of-lyapunov} and numerical observations, we can hypothesize a Lyapunov function of the form
\begin{equation*}
    V_k = a_k ( f(x_k) - f(x_\star) ) +  b_k \norm{ \nabla f(x_k) }^2 + c_k \norm{ x_{k+1} - x_\star }^2 - d \norm{ x_{0} - x_\star }^2
\end{equation*}
where $d \in \reals$ stays constant over iterations.
At the last iteration $N$, we have a simpler form
\begin{align*}
    V_N = f(x_N) - f(x_\star) - d\sqnorm{x_0 - x_\star} .
\end{align*}
This suggests us to instead use a translated Lyapunov function 
\[
    \widetilde{V}_k = V_k + d\sqnorm{x_0 - x_\star} = a_k ( f(x_k) - f(x_\star) ) +  b_k \norm{ \nabla f(x_k) }^2 + c_k \norm{ x_{k+1} - x_\star }^2 .
\]

\paragraph{Step 5.} We have $\widetilde{V}_N = f(x_N) - f(x_\star)$ and $\widetilde{V}_0 = d\sqnorm{x_0 - x_\star}$, 
and $\widetilde{V}_N \le \widetilde{V}_0$ should be directly equivalent to the final bound $f(x_N) - f(x_\star) \le \frac{L\sqnorm{x_0 - x_\star}}{4N+2}$ because $\cI \setminus \cI_N = \emptyset = \cJ \setminus \cJ_N$.
Hence, we must have $d=\frac{L}{4N+2}$.

The reverse-basis LDL decomposition with $N=5$ with $L=1$ yields the numerical $\vL_N$ with the following column vectors:
\[
\begin{array}{c|rrrrrrrr}
 & \boldsymbol{\ell}_5 & \boldsymbol{\ell}_4 & \boldsymbol{\ell}_3 & \boldsymbol{\ell}_2 & \boldsymbol{\ell}_1 & \boldsymbol{\ell}_0 & \boldsymbol{\ell}_{-1} & \boldsymbol{\ell}_{-2} \\
\hline
\nabla f(x_5) & 1.0   & 0.0   & 0.0   & 0.0   & 0.0 & 0.0   & 0.0   & 0.0 \\
\nabla f(x_4) & 0.167 & 1.0   & 0.0   & 0.0   & 0.0 & 0.0   & 0.0   & 0.0 \\
\nabla f(x_3) & 0.167 & 0.143 & 1.0   & 0.0   & 0.0 & 0.0   & 0.0   & 0.0 \\
\nabla f(x_2) & 0.167 & 0.143 & 0.125 & 1.0   & 0.0 & 0.0   & 0.0   & 0.0 \\
\nabla f(x_1) & 0.167 & 0.143 & 0.125 & 0.111 & 1.0 & 0.0   & 0.0   & 0.0 \\
\nabla f(x_0) & 0.167 & 0.143 & 0.125 & 0.111 & 0.1 & 1.0   & 0.0   & 0.0 \\
x_*           & 0.167 & 0.143 & 0.125 & 0.111 & 0.1 & 0.091 & 1.0   & 0.0 \\
x_0           & -0.167 & -0.143 & -0.125 & -0.111 & -0.1 & -0.091 & 0.604 & 1.0
\end{array}
\]
where the row labels indicate the PEP basis directions associated with the coefficients within each row.
From this numerical data and the fact that $x_k - x_\star = x_0 - x_\star - \frac{1}{L}\sum_{j=0}^{k-1} \nabla f(x_j)$ for each $k=1,\dots,N$, we can easily infer that $s_i = \begin{bmatrix} \nabla f(x_N) & \cdots & \nabla f(x_0) & x_\star & x_0 \end{bmatrix} \boldsymbol{\ell}_{i} = \frac{1}{L}\nabla f(x_i) - \frac{1}{2N+1-i} (x_i - x_\star)$ for $i=0,\dots,N$.
Now we use this information, together with the analytic form of proof certificates
\begin{align*}
\lambda_{i-1,i} = \frac{i}{2N + 1 - i}, \quad i = 1, \dots, N,
\qquad
\lambda_{\star,i} =
\begin{cases}
\lambda_{0,1} & i = 0 \\
\lambda_{i,i+1} - \lambda_{i-1,i} & i = 1, \dots, N-1 \\
1 - \lambda_{i-1,i} & i = N
\end{cases}
\end{align*}
which can be obtained either via numerical inspection or from prior work \citep{DroriTeboulle2014_performance},
to solve the equation
\begin{align}
    \label{eqn:gd-lyapunov-consecutive-difference}
    \begin{aligned}
        \widetilde{V}_k - \widetilde{V}_{k+1} & = -\lambda_{k,k+1} \left( f(x_{k+1})-f(x_k) + \langle \nabla f(x_{k+1}) , x_{k} - x_{k+1} \rangle + \frac{1}{2 L} \| \nabla f(x_{k}) - \nabla f(x_{k+1}) \|^2 \right)  \\ 
        & \phantom{=} - \lambda_{\star,k+1} \left( f(x_{k+1})-f(x_{\star}) + \langle \nabla f(x_{k+1}) , x_{\star} - x_{k+1} \rangle + \frac{1}{2 L} \| \nabla f(x_{k+1}) \|^2 \right) \\ 
        &\phantom{=} + \alpha_{k+1} \sqnorm{\frac{1}{L}\nabla f(x_{k+1}) - \frac{1}{2N-k} (x_{k+1} - x_\star)} 
    \end{aligned}
\end{align}
for $a_k, b_k, c_k, a_{k+1}, b_{k+1}, c_{k+1}$ and $\alpha_{k+1}$.
This yields
\begin{align*}
    a_k = \frac{k+1}{2N-k} , \quad b_k = -\frac{1}{2L} \frac{k+1}{2N-k} , \quad c_k = \frac{L}{2} \frac{2N-2k-1}{(2N-k)^2} , \quad \alpha_k = \frac{L}{2} \frac{2N-k+1}{2N - k} \pr{ \lambda_{k,k+1} + \lambda_{k-1,k} } 
\end{align*}
holds for $0 \le k \le N-1$, and also for $k=N$ for $\alpha_k$, with the convention $\lambda_{-1,0} = 0 = \lambda_{N,N+1}$.

Putting the above altogether, we conclude as follows:
\begin{theorem} \label{theorem:gd}
    Let $f\colon \reals^d \to \reals$ be convex and $L$-smooth. Let $x_\star$ be a minimizer of $f$, and let $x_0 \in \reals^d$ be an initial point.
    Let $x_k$ be iterates generated from \eqref{eq:gd}, and define
    \begin{equation*}
        \widetilde{V}_k = 
            \frac{k+1}{2N-k} \pr{ f(x_k) - f(x_\star) - \frac{1}{2L} \norm{ \nabla f(x_k) }^2 }
            + \frac{L}{2} \frac{2N-2k-1}{(2N-k)^2} \norm{ x_{k+1} - x_\star }^2 
    \end{equation*}
    for $k=0,\dots,N-1$, $\widetilde{V}_{-1} =  \frac{L}{4N+2} \norm{ x_{0} - x_\star }^2$ and $\widetilde{V}_N =  f(x_N) - f(x_\star)$. 
    Then, the identity \eqref{eqn:gd-lyapunov-consecutive-difference} holds for $k=0,\dots,N-1$, and $\widetilde{V}_0 \le \widetilde{V}_{-1}$.
    Consequently, $\widetilde{V}_N \le \widetilde{V}_{-1}$, and equivalently, $f(x_N) - f(x_\star) \le \frac{L\sqnorm{x_0 - x_\star}}{4N+2}$.
\end{theorem}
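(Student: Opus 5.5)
The plan is to verify the identity \eqref{eqn:gd-lyapunov-consecutive-difference} as a formal polynomial identity in the quantities $f(x_k),f(x_{k+1}),f(x_\star)$ and the vectors $x_{k+1}-x_\star$, $\nabla f(x_k)$, $\nabla f(x_{k+1})$, using the coefficients and certificates already identified in Step~5. Then I will handle the initial comparison $\widetilde V_0\le\widetilde V_{-1}$ separately, and finally telescope. The first move is to reduce everything to a small basis. By \eqref{eq:gd}, $x_k-x_\star = (x_{k+1}-x_\star)+\frac1L\nabla f(x_k)$ and $x_{k+2}-x_\star=(x_{k+1}-x_\star)-\frac1L\nabla f(x_{k+1})$. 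So both sides of the identity for fixed $k$ become quadratic forms in $u=x_{k+1}-x_\star$, $g=\nabla f(x_k)$, $h=\nabla f(x_{k+1})$, plus linear terms in the three function values.

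\emph{Matching the function values.} The function-value coefficients are quick to check. On the left, $f(x_k)$ has coefficient $a_k$, $f(x_{k+1})$ has $-a_{k+1}$, and $f(x_\star)$ has $a_{k+1}-a_k$. On the right they are $\lambda_{k,k+1}$, $-\lambda_{k,k+1}-\lambda_{\star,k+1}$, and $\lambda_{\star,k+1}$. With $\lambda_{k,k+1}=\frac{k+1}{2N-k}=a_k$ and $\lambda_{\star,k+1}=\lambda_{k+1,k+2}-\lambda_{k,k+1}$, these match. The case $k=N-1$ uses $\lambda_{\star,N}=1-\lambda_{N-1,N}$ and $\widetilde V_N=f(x_N)-f(x_\star)$, so the formula effectively uses $a_N=1$.

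\emph{Matching the quadratic part.} This is the main obstacle. I must compare the six coefficients of $\|u\|^2,\|g\|^2,\|h\|^2,\langle u,g\rangle,\langle u,h\rangle,\langle g,h\rangle$. These involve $b_k,c_k,b_{k+1},c_{k+1}$, the $\frac1{2L}$ cocoercivity terms, and the square $\alpha_{k+1}\|\frac1Lh-\frac1{2N-k}u\|^2$. A useful simplification is that $\langle g,h\rangle$ appears on the right only through $-\frac{\lambda_{k,k+1}}{2L}\|g-h\|^2$, and $\|g\|^2$ only through $b_k$ and that same term. So $b_k=-\frac{a_k}{2L}$ is forced and consistent. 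The remaining four coefficients reduce to rational-function identities in $k$ and $N$, which I would clear denominators on and check, ideally with \texttt{sympy} as in the earlier propositions. The terminal case $k=N-1$ needs separate handling, since $\widetilde V_N$ has no gradient or distance terms: its $\|u\|^2$, $\langle u,h\rangle$, and $\|h\|^2$ coefficients must be absorbed exactly by the $\lambda_{\star,N}$ cocoercivity term and the $\alpha_N$ square. The convention $\lambda_{N,N+1}=0$ is what makes the $\alpha_N$ formula consistent there.

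\emph{Initial step and conclusion.} For $\widetilde V_0\le\widetilde V_{-1}$, write $x_1-x_\star=(x_0-x_\star)-\frac1L\nabla f(x_0)$. I then expect $\widetilde V_{-1}-\widetilde V_0$ to equal $-\lambda_{\star,0}$ times the cocoercivity inequality between $x_\star$ and $x_0$, plus $\alpha_0\|\frac1L\nabla f(x_0)-\frac1{2N+1}(x_0-x_\star)\|^2$. This mirrors $\cI_0=\{(\star,0)\}$ and $\cJ_0=\{0\}$, and is checked the same way in the basis $x_0-x_\star,\nabla f(x_0)$. Both pieces are nonnegative. In \eqref{eqn:gd-lyapunov-consecutive-difference} the right-hand side is likewise nonnegative, since each cocoercivity bracket is $\le0$ and $\alpha_{k+1}\ge0$. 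Telescoping from $k=0$ to $N-1$ then gives $\widetilde V_N\le\widetilde V_0\le\widetilde V_{-1}$, which is exactly $f(x_N)-f(x_\star)\le\frac{L}{4N+2}\|x_0-x_\star\|^2$.
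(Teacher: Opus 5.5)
Your proposal is correct and follows the paper's proof. Like the paper, you verify \eqref{eqn:gd-lyapunov-consecutive-difference} as a formal identity in the basis $x_{k+1}-x_\star,\nabla f(x_k),\nabla f(x_{k+1})$ (the paper does this symbolically in the notebook), check $\widetilde V_0-\widetilde V_{-1}=\lambda_{\star,0}(\cdots)-\alpha_0\|\cdots\|^2$, and telescope; the coefficient matching does go through, including the terminal step with $\lambda_{\star,N}=\frac{1}{N+1}$ and $\alpha_N=\frac{L}{2}$. One side remark needs correcting: $\langle g,h\rangle$ also enters the right-hand side through $\langle\nabla f(x_{k+1}),x_k-x_{k+1}\rangle=\frac{1}{L}\langle h,g\rangle$, which exactly cancels the cross term of $\frac{\lambda_{k,k+1}}{2L}\|g-h\|^2$, so its net coefficient is zero on both sides (as it must be, since the left-hand side has no such term).
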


\begin{proof} 
Once we prove the identity~\eqref{eqn:gd-lyapunov-consecutive-difference} for $k=0,\dots,N-1$ and show that
\begin{align*}
    \widetilde{V}_0 - \widetilde{V}_{-1} = \lambda_{\star,0} \left( f(x_0) - f(x_\star) + \inprod{\nabla f(x_0)}{x_\star - x_0} + \frac{1}{2L} \sqnorm{\nabla f(x_0)} \right) - \alpha_0 \sqnorm{\frac{1}{L}\nabla f(x_0) - \frac{1}{2N+1} (x_0 - x_\star)} ,
\end{align*}
we establish the chain $\widetilde{V}_{N} \le \cdots \le \widetilde{V}_0 \le \widetilde{V}_{-1}$ and the proof is complete.
Hence the key step is the algebraic verification, which we provide in \texttt{examples\_lyapunov/gd/gd\_example\_lyap.ipynb}.
\end{proof} 

\paragraph{Remark.} This convergence guarantee has been presented in \citep{DroriTeboulle2014_performance} via a PEP-style proof, 
and \citep{TeboulleVaisbourd2023_elementary} provided the same rate with a more elementary proof.
However, neither work has formally presented their arguments in the form of Lyapunov analysis similar to \cref{theorem:gd}.
While the proof of \citep{TeboulleVaisbourd2023_elementary} can be reassembled into a Lyapunov-style proof, it still produces a Lyapunov function $W_k = k(f(x_k) - f(x_\star)) + \frac{k(k+1)}{2L} \sqnorm{\nabla f(x_k)} + \frac{L}{2} \sqnorm{x_k - x_\star}$, different from our \cref{theorem:gd}.

\subsection{Bregman Proximal Gradient Method (BPGM)}
\label{section:bpgm}

Consider the setting of Appendix~\ref{section:setting-composite-minimization-bregman}, 
where we solve the composite convex minimization problem with respect to $F = \bregsm+\bregprox$ where $\bregsm$ is a relatively $L$-smooth term and $\bregprox$ is a nonsmooth proximable term with respect to $h$, the Legendre-type convex function defining the Bregman distance $D_h$.
We consider the \textit{Bregman proximal gradient method (BPGM)}, defined by
\begin{align}
\label{eq:bpgm_main}
\tag{BPGM}
    x_{k+1}\in \argmin_{y\in C}
    \left\{
        \bregprox(y)+\inprod{\nabla \bregsm(x_k)}{y-x_k}
        +L D_h(y,x_k)
    \right\} 
\end{align}
with the associated optimality condition 
\begin{align}
\label{eq:bpgm_optimality_main}
    \tnabla \bregprox(x_{k+1})+\nabla \bregsm(x_k)
    =-L\pr{\nabla h(x_{k+1})-\nabla h(x_k)}.
\end{align}
BPGM has the convergence bound $F(x_N)-F(x_{\compidx})\le \frac{L}{N}D_h(x_{\compidx}, x_0)$ which holds for any $x_{\compidx} \in \operatorname{dom}F\cap\operatorname{dom}h$ with finite $D_h(x_{\compidx},x_0)$, which is a special case of \cite[Corollary~1]{GutmanPena2023_perturbed}. 
Here, $x_{\compidx}$ is an arbitrary reference point and is not necessarily a minimizer of $F$.
Define $\bregaux = h - \frac{1}{L}\bregsm$.
For each convex function $q \in \set{\bregsm,\bregprox,h,\bregaux}$, define
\[
    \cC_q(x,y) = q(y)-q(x)+\inprod{\tnabla q(y)}{x-y}\le 0,
\]
to be the nonpositive convexity inequality, 
where $\tnabla q(y)$ denotes a selected subgradient when $q$ is nonsmooth and $\tnabla q(y)=\nabla q(y)$ when $q$ is smooth. 
The PEP-style proof of BPGM's convergence rate takes the form
\begin{align}
    F(x_N)-F(x_{\compidx})-\frac{L}{N}D_h(x_{\compidx},x_0)
    &=
    \sum_{i=1}^{N-1}\lambda^{\bregsm}_{i,i+1} \cC_{\bregsm}(x_i,x_{i+1})
    +\sum_{i=0}^{N-1}\lambda^{\bregsm}_{\compidx,i} \cC_{\bregsm}(x_{\compidx},x_i) \nonumber \\
    &\quad
    +\sum_{i=1}^{N-1}\lambda^{\bregprox}_{i,i+1} \cC_{\bregprox}(x_i,x_{i+1})
    +\sum_{i=1}^{N}\lambda^{\bregprox}_{\compidx,i} \cC_{\bregprox}(x_{\compidx},x_i) \nonumber \\
    &\quad
    +\sum_{i=1}^{N-1}\lambda^{\bregaux}_{i,i+1} \cC_{\bregaux}(x_i,x_{i+1})
    +\sum_{i=1}^{N}\lambda^{\bregaux}_{i,i-1} \cC_{\bregaux}(x_i,x_{i-1})
    +\lambda^h_{\compidx,N} \cC_h(x_{\compidx},x_N),
    \label{eq:bpgm_full_pep_certificate}
\end{align}
where the nonnegative weights are specified later.
In this case, the active interpolation inequalities are
\[
    \begin{aligned}
    \cI^{\bregsm} &= \{(i,i+1)\,|\,i=1,\dots,N-1\}\cup\{(\compidx,i)\,|\,i=0,\dots,N-1\} \\
    \cI^{\bregprox} &= \{(i,i+1)\,|\,i=1,\dots,N-1\}\cup\{(\compidx,i)\,|\,i=1,\dots,N\} \\
    \cI^{\bregaux} &= \{(i,i+1)\,|\,i=1,\dots,N-1\}\cup\{(i,i-1)\,|\,i=1,\dots,N\} \\
    \cI^h &= \{(\compidx,N)\} \\
    \cJ &= \emptyset .
    \end{aligned}
\]

\paragraph{Step 1.}
Take $\cI_0^{\bregsm}=\cI_0^{\bregprox}=\cI_0^{\bregaux}=\cI_0^h=\cJ_0=\emptyset$, and for $k=1,\dots,N$, take
\[
    \begin{aligned}
    \cI_k^{\bregsm} &= \{(i,i+1)\,|\,i=1,\dots,k-1\}\cup\{(\compidx,i)\,|\,i=0,\dots,k-1\} \\
    \cI_k^{\bregprox} &= \{(i,i+1)\,|\,i=1,\dots,k-1\}\cup\{(\compidx,i)\,|\,i=1,\dots,k\} \\
    \cI_k^{\bregaux} &= \{(i,i+1)\,|\,i=1,\dots,k-1\}\cup\{(i,i-1)\,|\,i=1,\dots,k\} \\
    \cI_k^h &= \emptyset \\
    \cJ_k &= \emptyset .
    \end{aligned}
\]
The only remaining term in the proof~\eqref{eq:bpgm_full_pep_certificate} is the terminal interpolation inequality $\cC_h(x_{\compidx},x_N)$.

\paragraph{Step 2.}
This choice clearly satisfies sufficiency.
Numerically, $\vV_0 = 0$ and $\vV_k$ has the constant rank $4$ for $k=1,\dots,N$.

\paragraph{Step 3.}
We construct the set of special vectors as
\[
    \cT = 
    \left\{\vx_i \,\middle|\, i=0,\dots,N,\compidx \right\}
    \cup \left\{\vxi_i^{\bregsm},\vxi_i^h \,\middle|\, i=0,\dots,N \right\}
    \cup \left\{\vxi_i^{\bregprox} \,\middle|\, i=1,\dots,N\right\},
    \qquad
    \cS = \cT \cup \{\vu-\vw \,|\, \vu,\vw\in\cT\},
\]
where $\vxi_i^{\bregprox}$ represents $\tnabla \bregprox(x_i)$ for $i=1,\dots,N$, and $\vxi_i^{\bregsm}$ and $\vxi_i^h$ represent $\nabla \bregsm(x_i)$ and $\nabla h(x_i)$ for $i=0,\dots,N$.

\paragraph{Step 4.}
Numerical observation based on the results of Step~3 reveals that all $V_k$ contain the term $-\frac{L}{N} D_h(x_{\compidx}, x_0)$.
We add this term throughout and consider
\[
    \widetilde V_k
    =
    V_k+\frac{L}{N}D_h(x_{\compidx},x_0)
\]
for $k=0,\dots,N$.
Applying Step~3 again to this adjusted sequence, we obtain $\{\vxi_k^h,\vx_k-\vx_{\compidx}\}\subset\cS\cap\cR(\widetilde{\vV}_k)$ with the corresponding coefficient matrix given by \cref{proposition:linear-algebra-of-lyapunov} being simply $\begin{bmatrix} 0 & \frac{L}{2N}\\ \frac{L}{2N} & 0 \end{bmatrix}$, representing the single inner product term $\frac{L}{N}\inprod{\nabla h(x_k)}{x_k-x_{\compidx}}$.
Combining this with the function-value coordinate support, we find that
\[
    \widetilde V_k = a_k\pr{F(x_k)-F(x_{\compidx})} + b_kD_h(x_{\compidx},x_k)
\]
is the Lyapunov skeleton for $k=0,\dots,N$.

\paragraph{Step 5.}
Substituting the above form into the identity
\begin{align}
\label{eq:bpgm_difference_identity}
\begin{aligned}
    \widetilde V_k-\widetilde V_{k+1}
    & =
    -\frac{k}{N}\cC_{\bregsm}(x_k,x_{k+1})
    -\frac{1}{N}\cC_{\bregsm}(x_{\compidx},x_k) \\
    & \quad 
    -\frac{k}{N}\cC_{\bregprox}(x_k,x_{k+1})
    -\frac{1}{N}\cC_{\bregprox}(x_{\compidx},x_{k+1}) \\
    & \quad 
    -\frac{Lk}{N}\cC_{\bregaux}(x_k,x_{k+1})
    -\frac{L(k+1)}{N} \cC_{\bregaux}(x_{k+1},x_k)
\end{aligned}
\end{align}
for $k=0,\dots,N-1$, where the terms multiplied by $k$ are omitted when $k=0$, 
which follows from the analytic inequality weights given by
\[
    \begin{aligned}
    \lambda^{\bregsm}_{i,i+1}&=\frac{i}{N},&
    \lambda^{\bregsm}_{\compidx,i}&=\frac{1}{N},&
    \lambda^{\bregprox}_{i,i+1}&=\frac{i}{N},&
    \lambda^{\bregprox}_{\compidx,i}&=\frac{1}{N},\\
    \lambda^{\bregaux}_{i,i+1}&=\frac{Li}{N},&
    \lambda^{\bregaux}_{i,i-1}&=\frac{Li}{N},&
    \lambda^h_{\compidx,N}&=\frac{L}{N},
    \end{aligned}
\]
and using the condition~\eqref{eq:bpgm_optimality_main}, 
we can symbolically identify the solution $a_k = \frac{k}{N}$, $a_{k+1} = \frac{k+1}{N}$, and $b_k=b_{k+1}=\frac{L}{N}$.
Therefore,
\begin{align}
\label{eqn:bpgm-lyapunov}
    \widetilde V_k=\frac{k}{N}\pr{F(x_k)-F(x_{\compidx})}+\frac{L}{N}D_h(x_{\compidx},x_k),
    \qquad k=0,\dots,N.
\end{align}
Summarizing the above, we obtain the following result.

\begin{theorem}
\label{theorem:bpgm-lyapunov}
Let $h$ be a Legendre-type convex function with $C=\operatorname{int}\operatorname{dom}h\neq\emptyset$.
Let $\bregprox$ be closed, convex, and proper with $\operatorname{dom}\bregprox\cap C\neq\emptyset$, and let $\bregsm$ be convex on its domain containing $C$, and differentiable on $C$.
Suppose that $\bregaux=h-\frac{1}{L}\bregsm$ is convex on $C$ for some $L>0$.
Let $x_0 \in C$ and assume that the subproblems in \eqref{eq:bpgm_main} admit selected minimizers in $C$ satisfying the optimality condition \eqref{eq:bpgm_optimality_main}.
Let $x_k$ be the resulting iterates, and let $x_{\compidx}\in\operatorname{dom}F\cap\operatorname{dom}h$ be any reference point satisfying $D_h(x_{\compidx},x_0)<\infty$.
Then, the sequence $\widetilde V_k$ defined in \eqref{eqn:bpgm-lyapunov} satisfies \eqref{eq:bpgm_difference_identity} for $k=0,\dots,N-1$.
Consequently, $\widetilde V_N\le\cdots\le \widetilde V_0=\frac{L}{N}D_h(x_{\compidx},x_0)$, and this implies
\[
    F(x_N)-F(x_{\compidx})\le \frac{L}{N}D_h(x_{\compidx},x_0).
\]
\end{theorem}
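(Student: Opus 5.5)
The plan is to verify the identity \eqref{eq:bpgm_difference_identity} by direct expansion for fixed $k$. First I will eliminate $\bregaux$ using $L\,\cC_{\bregaux} = L\,\cC_h - \cC_{\bregsm}$, which holds because $\cC_q$ is linear in $q$. After this substitution, the right-hand side of \eqref{eq:bpgm_difference_identity} contains only function values of $\bregsm,\bregprox,h$ at $x_k,x_{k+1},x_{\compidx}$, and inner products involving $\nabla \bregsm(x_k)$, $\nabla\bregsm(x_{k+1})$, $\tnabla\bregprox(x_{k+1})$, $\nabla h(x_k)$ and $\nabla h(x_{k+1})$.

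I expect the two $\cC_{\bregsm}(x_k,x_{k+1})$ contributions to cancel exactly. These are the terms coming from $-\frac{k}{N}\cC_{\bregsm}$ and from $+\frac{k}{N}\cC_{\bregsm}$, the latter produced by $-\frac{Lk}{N}\cC_{\bregaux}(x_k,x_{k+1})$. After the cancellation, the $\bregsm$-part that remains is
\[
\tfrac{k+1}{N}\cC_{\bregsm}(x_{k+1},x_k) - \tfrac1N\cC_{\bregsm}(x_{\compidx},x_k).
\]
Its function-value contributions are $\frac{k+1}{N}(\bregsm(x_k)-\bregsm(x_{k+1})) - \frac1N(\bregsm(x_k)-\bregsm(x_{\compidx}))$, which match the $\bregsm$-part of $\frac{k}{N}(F(x_k)-F(x_{\compidx})) - \frac{k+1}{N}(F(x_{k+1})-F(x_{\compidx}))$. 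The $\bregprox$-terms should match in the same way: the function-value coefficients of $-\frac kN\cC_{\bregprox}(x_k,x_{k+1})-\frac1N\cC_{\bregprox}(x_{\compidx},x_{k+1})$ are $\frac kN$ on $\bregprox(x_k)$, $-\frac{k+1}{N}$ on $\bregprox(x_{k+1})$, and $\frac1N$ on $\bregprox(x_{\compidx})$, as required.

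Next I treat the $h$-terms. There are $-\frac{Lk}{N}\cC_h(x_k,x_{k+1})$ and $-\frac{L(k+1)}{N}\cC_h(x_{k+1},x_k)$, and I will combine them with $\frac LN\bigl(D_h(x_{\compidx},x_k)-D_h(x_{\compidx},x_{k+1})\bigr)$ from the left-hand side. To do this I write $D_h(x_{\compidx},y) = h(x_{\compidx})-h(y)-\inprod{\nabla h(y)}{x_{\compidx}-y}$. The function values of $h$ then cancel in the same bookkeeping way as for $\bregsm$ and $\bregprox$.

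What is left is an identity among inner products only. This is the main step, since it is where the algorithm enters. I will use the optimality condition \eqref{eq:bpgm_optimality_main} to replace
\[
L\bigl(\nabla h(x_{k+1})-\nabla h(x_k)\bigr) \quad\text{by}\quad -\tnabla\bregprox(x_{k+1})-\nabla\bregsm(x_k).
\]
After grouping the remaining inner products by the directions $x_k-x_{k+1}$ and $x_{\compidx}-x_{k+1}$, I expect the coefficients to agree exactly. The weight $\frac{k+1}{N}$ multiplies the $(x_k-x_{k+1})$-direction and the weight $\frac1N$ multiplies the $(x_{\compidx}-x_{k+1})$-direction, which is a three-point-identity structure. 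This is the same formal coefficient-matching identity that \texttt{sympy} checks in the notebook, so I could fall back on it, but by hand it is short.

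For $k=0$ the terms multiplied by $k$ vanish, and the same computation applies unchanged.

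Finally, every $\cC$ term is nonpositive by convexity, so the right-hand side of \eqref{eq:bpgm_difference_identity} is nonnegative, and hence $\widetilde V_N\le\cdots\le\widetilde V_0$. The endpoints are $\widetilde V_0=\frac LN D_h(x_{\compidx},x_0)$ and $\widetilde V_N = F(x_N)-F(x_{\compidx})+\frac LN D_h(x_{\compidx},x_N)$. Since $D_h(x_{\compidx},x_N)\ge 0$ (this is the dropped term $\cC_h(x_{\compidx},x_N)$), the rate $F(x_N)-F(x_{\compidx})\le \frac LN D_h(x_{\compidx},x_0)$ follows.
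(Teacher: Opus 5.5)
Your proposal is correct and follows the paper's route: you verify \eqref{eq:bpgm_difference_identity} by direct expansion using the optimality condition \eqref{eq:bpgm_optimality_main}, use $\cC_q\le 0$ to get the monotone chain, and drop $\frac{L}{N}D_h(x_{\compidx},x_N)\ge 0$ at the end. The paper delegates the expansion to a symbolic check, while you do the bookkeeping by hand: substitute $L\,\cC_{\bregaux}=L\,\cC_h-\cC_{\bregsm}$, match the function values, then apply the optimality condition. That computation goes through, and the inner-product parts of both sides reduce to $\frac{L}{N}\bigl(\langle\nabla h(x_k),x_k-x_{k+1}\rangle+\langle\nabla h(x_{k+1})-\nabla h(x_k),x_{\compidx}-x_{k+1}\rangle\bigr)$.
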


\begin{proof}
The identity \eqref{eq:bpgm_difference_identity} follows by direct expansion using the optimality condition \eqref{eq:bpgm_optimality_main}; the associated BPGM notebook \path{bpgm_example_lyap.ipynb} provides a symbolic verification.
Since $\cC_q \le 0$ for $q\in \{\bregsm,\bregprox,\bregaux\}$, the right-hand side of \eqref{eq:bpgm_difference_identity} is nonnegative, and therefore $\widetilde V_N\le\cdots\le \widetilde V_0=\frac{L}{N}D_h(x_{\compidx},x_0)$.
Finally, because $D_h(x_{\compidx}, x_N) \ge 0$,
\[
    F(x_N)-F(x_{\compidx}) \le \widetilde V_N \le \frac{L}{N}D_h(x_{\compidx},x_0) ,
\]
which completes the proof.
\end{proof}

In the special case $\bregprox\equiv0$, BPGM reduces to Bregman gradient descent (BGD). 
The same specialization gives the following Lyapunov analysis for BGD relative to an arbitrary comparison point. 
This recovers the rate in \cite{LuFreundNesterov2018_relatively} and matches the lower-bound result established in \cite{dragomirOptimalComplexityCertification2022}.

\begin{corollary}[Bregman gradient descent]
\label{corollary:bgd-rate}
Let $\bregsm,h$ satisfy the conditions of \cref{theorem:bpgm-lyapunov}.
Let $x_0 \in C$ and let $x_k$ be generated by the Bregman gradient descent
\begin{align}
\tag{BGD}\label{eq:bgd_main}
    x_{k+1}\in\argmin_{y\in C}
    \left\{
        \inprod{\nabla \bregsm(x_k)}{y-x_k}+L D_h(y,x_k)
    \right\}
\end{align}
for $k=0,\dots,N-1$, or equivalently via the relation
\[
    \nabla \bregsm(x_k)+L\pr{\nabla h(x_{k+1})-\nabla h(x_k)}=0.
\]
Let $x_{\compidx} \in \operatorname{dom}\bregsm\cap\operatorname{dom}h$ satisfy $D_h(x_{\compidx},x_0)<\infty$.
Then the Lyapunov sequence
\[
    \widetilde V_k=\frac{k}{N}\pr{\bregsm(x_k)-\bregsm(x_{\compidx})}+\frac{L}{N}D_h(x_{\compidx},x_k)
\]
is nonincreasing for $k=0,\dots,N$, and consequently,
\[
    \bregsm(x_N)-\bregsm(x_{\compidx})\le \frac{L}{N}D_h(x_{\compidx},x_0).
\]
\end{corollary}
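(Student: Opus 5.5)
The plan is to derive the corollary as the special case $\bregprox\equiv 0$ of \cref{theorem:bpgm-lyapunov}. First, I would confirm that the zero function meets that theorem's hypotheses for $\bregprox$: it is closed, convex and proper, and $\operatorname{dom}\bregprox=\reals^d$, so $\operatorname{dom}\bregprox\cap C=C\neq\emptyset$. Next, with the selected subgradient $\tnabla\bregprox(x_{k+1})=0$, the subproblem \eqref{eq:bpgm_main} reduces to \eqref{eq:bgd_main}. The optimality condition \eqref{eq:bpgm_optimality_main} becomes $\nabla\bregsm(x_k)+L(\nabla h(x_{k+1})-\nabla h(x_k))=0$, which is exactly the relation stated in the corollary. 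We also have $F=\bregsm$, and $\operatorname{dom}F\cap\operatorname{dom}h=\operatorname{dom}\bregsm\cap\operatorname{dom}h$, so the admissible reference points $x_{\compidx}$ coincide. The convexity assumptions on $\bregsm$ and on $\bregaux=h-\frac1L\bregsm$ are inherited verbatim.

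Under these identifications, \eqref{eqn:bpgm-lyapunov} is exactly the $\widetilde V_k$ of the corollary. The identity \eqref{eq:bpgm_difference_identity} then holds for $k=0,\dots,N-1$. Its $\cC_{\bregprox}$ terms vanish identically, because $\cC_0(x,y)=0-0+\inprod{0}{x-y}=0$. What remains is
\[
\widetilde V_k-\widetilde V_{k+1}=-\tfrac{k}{N}\cC_{\bregsm}(x_k,x_{k+1})-\tfrac1N\cC_{\bregsm}(x_{\compidx},x_k)-\tfrac{Lk}{N}\cC_{\bregaux}(x_k,x_{k+1})-\tfrac{L(k+1)}{N}\cC_{\bregaux}(x_{k+1},x_k).
\]
Each term on the right is nonnegative, since $\cC_q\le0$ for the convex functions $\bregsm$ and $\bregaux$. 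This gives monotonicity of $\widetilde V_k$. For the rate, $\widetilde V_0=\frac LN D_h(x_{\compidx},x_0)$, and $D_h(x_{\compidx},x_N)\ge0$ gives $\bregsm(x_N)-\bregsm(x_{\compidx})\le\widetilde V_N\le\widetilde V_0$.

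If one prefers not to rely on the symbolic verification inside the theorem, I would check the reduced identity by direct expansion. I would write out the four $\cC$ terms, substitute $D_h(x_{\compidx},x)=h(x_{\compidx})-h(x)-\inprod{\nabla h(x)}{x_{\compidx}-x}$ and $\bregaux=h-\frac1L\bregsm$, and eliminate $\nabla h(x_{k+1})-\nabla h(x_k)=-\frac1L\nabla\bregsm(x_k)$. The main obstacle is this bookkeeping, specifically confirming that all function-value terms and inner-product terms cancel. It is routine, however, because this identity is a sub-case of the one already established in \cref{theorem:bpgm-lyapunov}.
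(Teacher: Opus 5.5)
Your proposal is correct and matches the paper's proof. The paper likewise sets $\bregprox\equiv 0$ in \cref{theorem:bpgm-lyapunov}, observes that $F=\bregsm$ and that \eqref{eq:bpgm_main} becomes \eqref{eq:bgd_main}, and applies the theorem with the same reference point $x_{\compidx}$. Your additional checks are accurate and only make explicit what the paper leaves implicit: the hypotheses hold for $\bregprox\equiv0$, the $\cC_{\bregprox}$ terms vanish, and the reduced identity does hold once $\nabla h(x_{k+1})-\nabla h(x_k)$ is eliminated via the BGD relation.
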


\begin{proof}
    
Set $\bregprox\equiv0$ in \cref{theorem:bpgm-lyapunov}.
Then we have $F=\bregsm$ and \eqref{eq:bpgm_main} becomes \eqref{eq:bgd_main}.
We obtain the conclusion by applying \cref{theorem:bpgm-lyapunov} with the same reference point $x_{\compidx}$.
\end{proof}

Similarly, in the special case $\bregsm\equiv0$, BPGM reduces to the \textit{Bregman proximal point method (BPPM)}, a classical algorithm which dates back to \cite{CZ1992}, and the \cref{theorem:bpgm-lyapunov} immediately yields the following corollary for BPPM, recovering the rate obtained in \cite{AuslenderTeboulle2006_interior}.

\begin{corollary}[Bregman proximal point method]
\label{corollary:bppm-rate}
Let $\bregprox,h$ satisfy the conditions of \cref{theorem:bpgm-lyapunov}.
Let $x_0 \in C$ and let $x_k$ be generated by the Bregman proximal point method
\begin{align}
\tag{BPPM}\label{eq:bppm_main}
    x_{k+1}\in\argmin_{y\in C}
    \left\{
        \bregprox(y)+L D_h(y,x_k)
    \right\}
\end{align}
for $k=0,\dots,N-1$.
Let $x_{\compidx} \in \operatorname{dom}\bregprox\cap\operatorname{dom}h$ satisfy $D_h(x_{\compidx},x_0)<\infty$.
Then the Lyapunov sequence
\[
    \widetilde V_k=\frac{k}{N}\pr{\bregprox(x_k)-\bregprox(x_{\compidx})}+\frac{L}{N}D_h(x_{\compidx},x_k)
\]
is nonincreasing for $k=0,\dots,N$, and consequently,
\[
    \bregprox(x_N)-\bregprox(x_{\compidx})\le \frac{L}{N}D_h(x_{\compidx},x_0) .
\]
\end{corollary}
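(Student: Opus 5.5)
The plan is to obtain \cref{corollary:bppm-rate} as a direct specialization of \cref{theorem:bpgm-lyapunov} with $\bregsm\equiv 0$, in the same way \cref{corollary:bgd-rate} was obtained with $\bregprox\equiv 0$. The argument has two parts. First, check that the choice $\bregsm\equiv0$ is admissible in the hypotheses of \cref{theorem:bpgm-lyapunov}. Second, check that \eqref{eq:bpgm_main} and its Lyapunov sequence reduce to the ones stated in the corollary.

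\textbf{Hypotheses.} The zero function is convex and differentiable on all of $\reals^d$, hence on $C$, and its domain contains $C$. With $\bregsm\equiv0$ we get $\bregaux=h-\frac{1}{L}\bregsm=h$. This is convex on $C$ because $h$ is Legendre-type, and this holds for every $L>0$. The assumptions on $\bregprox$ and $h$ are exactly those of the corollary.

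\textbf{Reduction of the iteration.} Since $\nabla\bregsm(x_k)=0$, the subproblem in \eqref{eq:bpgm_main} becomes $\argmin_{y\in C}\{\bregprox(y)+LD_h(y,x_k)\}$, which is \eqref{eq:bppm_main}. The optimality condition \eqref{eq:bpgm_optimality_main} becomes $\tnabla\bregprox(x_{k+1})=-L(\nabla h(x_{k+1})-\nabla h(x_k))$. This is the first-order condition of the BPPM subproblem, so I take the selected minimizers in $C$ to satisfy it, as the theorem assumes.

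\textbf{Reduction of the conclusion.} We have $F=\bregprox$ and $\operatorname{dom}F\cap\operatorname{dom}h=\operatorname{dom}\bregprox\cap\operatorname{dom}h$. The reference point $x_{\compidx}$ is therefore admissible, and \eqref{eqn:bpgm-lyapunov} becomes exactly the stated $\widetilde V_k$. Its monotonicity follows from \eqref{eq:bpgm_difference_identity}. In that identity the $\cC_{\bregsm}$ terms vanish identically, and the $\cC_{\bregaux}$ terms become $\cC_h\le0$. The bound $\bregprox(x_N)-\bregprox(x_{\compidx})\le\frac{L}{N}D_h(x_{\compidx},x_0)$ then follows as in the theorem.

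\textbf{Main obstacle.} The one point needing care is the existence of minimizers in $C$ satisfying the optimality condition. I will handle it the way \cref{theorem:bpgm-lyapunov} does: as a standing assumption on the selected iterates, which is standard for Legendre $h$ under the usual qualification $\operatorname{dom}\bregprox\cap C\neq\emptyset$.
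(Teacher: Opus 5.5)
Your proposal is correct and follows the paper's proof: set $\bregsm\equiv0$ in \cref{theorem:bpgm-lyapunov}, so that $F=\bregprox$ and $\bregaux=h$, observe that BPGM reduces to BPPM, and apply the theorem with the same reference point. Your explicit checks of the hypotheses, including treating the existence of minimizers in $C$ that satisfy the optimality condition as the theorem's standing assumption, just spell out what the paper's one-line argument leaves implicit.
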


\begin{proof}
Set $\bregsm\equiv0$ in \cref{theorem:bpgm-lyapunov}.
Then $F=\bregprox$, $\bregaux=h$, and \eqref{eq:bpgm_main} becomes \eqref{eq:bppm_main}.
We obtain the conclusion by applying \cref{theorem:bpgm-lyapunov} with the same reference point $x_{\compidx}$.
\end{proof}

As a side observation, the following lemma proves tightness even after the comparison point in \cref{corollary:bppm-rate} is chosen to be the unique minimizer $x_\star$ of $\bregprox$.
In particular, it also shows that the rate in \cref{corollary:bppm-rate}, which allows an arbitrary comparison point, cannot be improved.
It uses a lower bound construction that is closely related to the pathological limiting examples used in \cite{dragomirOptimalComplexityCertification2022}. 
We present the proof of this result in Appendix~\ref{appendix:bppm-tightness}.

\begin{lemma}[Tightness of the BPPM rate]
\label{lemma:bppm-tightness}
For any $0<\varepsilon<1$, $N\ge1$, $L>0$, and dimension $d\ge1$, there exist a proper closed convex function $\bregprox:\reals^d\to\reals$, a Legendre-type function $h:\reals^d\to\reals$, and an initial point $x_0\in\reals^d$ such that the iterates of \ref{eq:bppm_main} are uniquely defined and satisfy
\[
    \bregprox(x_N)-\bregprox(x_\star)
    \ge (1-\varepsilon)\frac{L}{N}D_h(x_\star,x_0) 
\]
where $x_\star$ is the unique minimizer of $\bregprox$.
Therefore, the rate of BPPM from \cref{corollary:bppm-rate} 
cannot be improved further.
\end{lemma}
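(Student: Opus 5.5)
The plan is to build an explicit one-dimensional example and then embed it in $\reals^d$. The nonsmooth term is $\bregprox(x)=a|x|$ with $a>0$, so $x_\star=0$ is its unique minimizer. The kernel $h$ is chosen so that $h'$ is strictly increasing and almost a step function: nearly flat on $[0,x_0-\delta]$, rising steeply to a large value just below $x_0$. For an iterate $x_{k+1}>0$, the optimality condition \eqref{eq:bpgm_optimality_main} with $\bregsm\equiv0$ reads $h'(x_{k+1})=h'(x_k)-\frac{a}{L}$. Hence, as long as the iterates stay positive,
\[
h'(x_k)=h'(x_0)-\frac{ka}{L}.
\]
If $h'(x_0)$ is only slightly larger than $\frac{Na}{L}$, then all $N$ dual decrements are absorbed inside the steep part of $h'$. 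So $x_N$ stays close to $x_0$ while $D_h(0,x_0)$ is about $x_0\cdot\frac{Na}{L}$.

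\textbf{Construction.} Fix $x_0=1$ and small parameters $\sigma,\delta,\eta>0$, and set $M=\frac{Na}{L}+\eta$. Let $h'$ be continuous, strictly increasing and piecewise linear: $h'(t)=\sigma t$ for $t\le 1-\delta$, then linear from $\sigma(1-\delta)$ up to $M$ on $[1-\delta,1]$, then slope $1$ beyond $1$. Let $h$ be its antiderivative with $h(0)=0$. Then $h$ is finite, differentiable and strictly convex on all of $\reals$, hence of Legendre type with $C=\reals$.

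\textbf{Well-posedness.} Each subproblem $\min_y\{a|y|+LD_h(y,x_k)\}$ is strictly convex and coercive, because $h'$ is unbounded in both directions. So $x_{k+1}$ is unique.

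\textbf{Positivity.} Next I check by induction that $x_k>0$ for all $k\le N$. Since $h'(x_N)=\eta>0$ and $h'$ vanishes only at $0$, the point $y$ solving $h'(y)=h'(x_k)-\frac{a}{L}$ is positive for every $k<N$. At such $y$ the subgradient of $\bregprox$ is $a$, so $y$ satisfies the optimality condition and is therefore the unique minimizer $x_{k+1}$.

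\textbf{Estimates.} First, $x_N\ge 1-\delta$ as soon as $\eta\ge\sigma(1-\delta)$; I would take $\eta=\sigma$. Next,
\[
D_h(0,x_0)=\int_0^{1}\bigl(h'(1)-h'(t)\bigr)\,dt\le M .
\]
Therefore
\[
\frac{\bregprox(x_N)-\bregprox(0)}{\frac{L}{N}D_h(0,x_0)}\ \ge\ \frac{a(1-\delta)}{\frac{L}{N}\bigl(\frac{Na}{L}+\eta\bigr)}\ =\ \frac{1-\delta}{1+\frac{L\eta}{Na}} .
\]
Choosing $\delta$ and $\eta$ small enough, depending on $\varepsilon$, makes this ratio at least $1-\varepsilon$.

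\textbf{Higher dimensions.} For $d\ge2$ write $x=(x^{(1)},x^{(2)},\dots,x^{(d)})$ for the coordinates. Use
\[
\bregprox(x)=a|x^{(1)}|+\sum_{i\ge2}|x^{(i)}|,\qquad h(x)=h_1(x^{(1)})+\tfrac12\sum_{i\ge2}(x^{(i)})^2,
\]
where $h_1$ is the one-dimensional kernel above. Take $x_0=(1,0,\dots,0)$. The BPPM subproblem is separable, and in each coordinate $i\ge2$ it reads $\min_y\{|y|+\frac{L}{2}y^2\}$ when started from $0$, whose unique solution is $0$. So those coordinates stay at $0$, and $D_h$ and $\bregprox$ reduce to the one-dimensional quantities. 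The minimizer of $\bregprox$ is still unique, namely the origin.

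\textbf{Conclusion.} Combined with \cref{corollary:bppm-rate}, this shows the constant $\frac{L}{N}$ cannot be improved.

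\textbf{Main obstacle.} The delicate step is the positivity induction: every iterate must stay on the side where $\partial\bregprox=\{a\}$, so that the explicit dual recursion is valid and the subgradient selection is forced. This is exactly why $h'(x_0)$ must exceed $\frac{Na}{L}$ by a margin $\eta>0$. That margin has to be balanced against the loss factor $\frac{1}{1+L\eta/(Na)}$, and it has to be consistent with the flat slope $\sigma$.
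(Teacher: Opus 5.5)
Your proposal is correct and follows essentially the same route as the paper: a one-dimensional instance with $\bregprox=|\cdot|$ (up to scaling and reflection), a kernel whose derivative rises steeply just next to $x_0$ so that all $N$ dual decrements are absorbed while the iterates stay on one side of $x_\star=0$ and close to $x_0$, an induction that forces the subgradient selection, and a separable extension to $\reals^d$. The differences are cosmetic: the paper puts the flat part of $h'$ at level $\approx -1/L$ and lets $h'$ rise to $0$ only within $\delta$ of $x_\star$ (and adds $\frac{\delta}{2L}x^2$), which lets it compute the trajectory and $D_h(x_\star,x_0)$ exactly, whereas you keep the flat part near level $0$, use a small margin $\eta$, and simply bound $D_h(0,x_0)\le h'(x_0)$.
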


\paragraph{Remark.}
It is worth noting that, after specializing the comparison point to a minimizer, the tight sublinear convergence factor for BGD in \cref{corollary:bgd-rate} (also reported in \cite{dragomirOptimalComplexityCertification2022}) and BPPM in \cref{corollary:bppm-rate} coincide, while this is not the case in the Euclidean setting, i.e., the tight convergence factors for GD and PPM differ \citep{TeboulleVaisbourd2023_elementary}.
Furthermore, perhaps interestingly, the single Lyapunov function in \cref{theorem:bpgm-lyapunov}, when specialized to the cases of BGD and BPPM by letting $\bregprox\equiv 0$ or $\bregsm\equiv 0$, still yields the tight rates for each respective algorithm.

\subsection{Proximal point method for monotone inclusion}
\label{section:ppm}

Let $\opA\colon\reals^d\rightrightarrows\reals^d$ be maximally monotone and let $x_\star$ satisfy $0\in\opA x_\star$.
We consider the standard proximal point method
\begin{align}
\label{alg:ppm}
\tag{PPM}
    x_{k+1}=\opJ_{\opA}x_k 
\end{align}
with the resolvent $\opJ_\opA = (\opI + \opA)^{-1}$.
For $k=0,\dots,N$, let $\topa x_{k+1} := x_k-x_{k+1}\in\opA x_{k+1}$.
Let $q=\frac{N}{N+1}$ and $\tau_N=\frac{q^N}{N+1}$.
The tight last-iterate residual norm guarantee is provided in \citep{guTightSublinearConvergence2020} via PEP-style proof:
\begin{align}
\label{eqn:ppm-pep-proof}
    & \sqnorm{\topa x_{N+1}}-\tau_N\sqnorm{x_0-x_\star} \notag\\
    & = -\sum_{i=1}^{N} \lambda_{i,i+1}
    \inprod{\topa x_i-\topa x_{i+1}}{x_i-x_{i+1}}
    -\sum_{i=1}^{N} \lambda_{i,\star}
    \inprod{\topa x_i}{x_i-x_\star} -\frac{2}{N+1}\inprod{\topa x_{N+1}}{x_{N+1}-x_\star}
    -\sum_{i=1}^{N}\alpha_i\sqnorm{s_i} ,
\end{align}
where
\[
    \lambda_{i,i+1} = 2 \left( \frac{N}{N+1} \right)^{N-i}\frac{i}{N+1},
    \qquad
    \lambda_{i,\star} = 2\left( \frac{N}{N+1} \right)^{N-i}\frac{N-i}{(N+1)^2},
    \qquad
    \alpha_i = \left( \frac{N}{N+1} \right)^{N-i}\frac{i}{N},
\]
and
\[
    s_i = \frac{N}{N+1} (x_{i-1}-x_\star) - \frac{2N}{N+1}(x_i-x_\star) + (x_{i+1}-x_\star).
\]

\paragraph{Step 1.}
Here, the evaluated residual is $\topa x_{N+1} = x_N-x_{N+1}$, so the natural Lyapunov index runs from $0$ to $N$ 
(as opposed to $0$ to $N-1$ in the case of Section~\ref{section:recovering-proximal-monotone}, where we evaluate $\sqnorm{\topa x_N}$).
We take $\cI_0 = \cJ_0=\emptyset$ and, for $k=1,\dots,N$,
\[
    \cI_k
    =
    \{(j,j+1)\,|\,j=1,\dots,k\}
    \cup
    \{(j,\star)\,|\,j=1,\dots,k\},
    \qquad
    \cJ_k=\{1,\dots,k\}.
\]
The only term left outside the Lyapunov analysis is the terminal monotonicity inequality for the pair $(N+1,\star)$.

\paragraph{Step 2.}
This choice trivially satisfies sufficiency.
The numerical computation shows that $\vV_k$ has an almost constant rank $3$ for $k=1,\dots,N-2$ and $k=N$, 
while the rank drops to $2$ for $k=N-1$.

\paragraph{Step 3.}
We construct the special-vector set from the PEP context vectors
\[
    \cT=\{\vx_0,\vx_\star,\vg_1,\dots,\vg_{N+1},\vx_1,\dots,\vx_{N+1}\},
    \qquad
    \cS=\cT\cup\{\vu-\vv:\vu,\vv\in\cT\},
\]
where $\vg_i$ represents $\topa x_i$ for $i=1,\dots,N+1$.

\paragraph{Step 4.}
For $k=1,\dots,N-2$, the code identifies
\[
    \{\vx_0 - \vx_\star, \vg_{k+1}, \vx_k-\vx_\star\}\subset \cS\cap\cR(\vV_k)
\]
and the coefficient matrix $\vA_k$ from this basis, computed according to \cref{proposition:linear-algebra-of-lyapunov}, is of the form
\[
    \vA_k = \bmat{-\tau_N & 0 & 0 \\ 0 & * & * \\ 0 & * & * } .
\]
Indeed, letting $\widetilde{V}_k = V_k + \tau_N \sqnorm{x_0 - x_\star}$ and re-running Steps 2--4 on it reveals the structure
\begin{align}
\label{eqn:ppm-monotone-lyapunov}
    \widetilde{V}_k = a_k \sqnorm{\topa x_{k+1}} + b_k \inprod{\topa x_{k+1}}{x_k-x_\star} + c_k \sqnorm{x_k-x_\star}
\end{align}
for $k=1,\dots,N-2$, while $\widetilde{V}_0 = \tau_N \sqnorm{x_0 - x_\star}$, $\widetilde{V}_N = \sqnorm{\topa x_{N+1}} + b_N \inprod{\topa x_{N+1}}{x_N - x_\star}$ (so we can let $a_0 = b_0 = 0$, $c_0 = \tau_N$ and $a_N = 1$, $c_N = 0$).
We hypothesize that \eqref{eqn:ppm-monotone-lyapunov} in fact holds for all $k=0,\dots,N$, while for $k=N-1$, the rank drop occurs because $b_{N-1}^2 = 4a_{N-1} c_{N-1}$ and \eqref{eqn:ppm-monotone-lyapunov} collapses to a perfect square of a linear combination of $\topa x_N$ and $x_{N-1} - x_\star$.

\paragraph{Step 5.}
To verify the hypothesis and determine the coefficients, we substitute $x_{k+2} = x_{k+1} - \topa x_{k+2}$ and \eqref{eqn:ppm-monotone-lyapunov} into 
\begin{align}
\label{eqn:ppm-lyapunov-difference}
\begin{aligned}
    \widetilde{V}_k - \widetilde{V}_{k+1}
    & =
    \lambda_{k+1,k+2}
    \inprod{\topa x_{k+1}-\topa x_{k+2}}{x_{k+1}-x_{k+2}}
    + \lambda_{k+1,\star} \inprod{\topa x_{k+1}}{x_{k+1}-x_\star} + \alpha_{k+1}\sqnorm{s_{k+1}}
\end{aligned}
\end{align}
and solve the equation for $a_k, b_k, c_k, a_{k+1}, b_{k+1}$ and $c_{k+1}$.
This reveals the unique solution 
\begin{alignat*}{3}
    a_k & = \left( \frac{N}{N+1} \right)^{N-k} \frac{k(N-1)}{N(N+1)}, & b_k & = 2\left( \frac{N}{N+1} \right)^{N-k}\frac{k}{N(N+1)} , & c_k & = \left( \frac{N}{N+1} \right)^{N-k}\frac{N-k}{N(N+1)} \\
    a_{k+1} & = \left( \frac{N}{N+1} \right)^{N-k-1} \frac{(k+1)(N-1)}{N(N+1)}, \quad & b_{k+1} & = 2\left( \frac{N}{N+1} \right)^{N-k-1}\frac{k+1}{N(N+1)}, \quad & c_{k+1} & = \left( \frac{N}{N+1} \right)^{N-k-1}\frac{N-k-1}{N(N+1)}
\end{alignat*}
that is consistent with respect to $k$.
Summarizing, we conclude:

\begin{theorem}
\label{theorem:ppm-lyapunov}
Let $\opA\colon\reals^d\rightrightarrows\reals^d$ be maximally monotone, let $x_\star$ satisfy $0\in\opA x_\star$, and let $\{x_k\}_{k=0}^{N+1}$ be generated by \eqref{alg:ppm}.
Define
\[
    \widetilde{V}_k = \left( \frac{N}{N+1} \right)^{N-k} \left( \frac{k(N-1)}{N(N+1)} \sqnorm{\topa x_{k+1}} + \frac{2k}{N(N+1)} \inprod{\topa x_{k+1}}{x_k-x_\star} 
    + \frac{N-k}{N(N+1)} \sqnorm{x_k-x_\star} \right) .
\]
Then \eqref{eqn:ppm-lyapunov-difference} holds for $k=0,\dots,N-1$, which implies
\[
    \widetilde{V}_N \le \dots \le \widetilde{V}_0
\]
and consequently, 
\[
    \sqnorm{\topa x_{N+1}}
    \le \frac{1}{N+1} \left( \frac{N}{N+1} \right)^N \sqnorm{x_0 - x_\star}
    = \frac{1}{(1+1/N)^N(N+1)}\sqnorm{x_0-x_\star}.
\]
\end{theorem}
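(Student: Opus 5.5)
The plan is to verify the one-step identity \eqref{eqn:ppm-lyapunov-difference} directly, as a polynomial identity in a small set of free vectors, and then telescope. Fix $k\in\{0,\dots,N-1\}$. I will work with the three free vectors $u=x_k-x_\star$, $g=\topa x_{k+1}$ and $h=\topa x_{k+2}$. The update \eqref{alg:ppm} gives $x_{k+1}-x_\star=u-g$ and $x_{k+2}-x_\star=u-g-h$, and also $x_k-x_{k+1}=g$.

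The first step is to express both sides in these variables. On the left, $\widetilde V_k$ is a quadratic form in $(g,u)$ and $\widetilde V_{k+1}$ is a quadratic form in $(h,u-g)$.

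On the right, the terms become:
\begin{itemize}
\item $\inprod{g-h}{h}$, using $x_{k+1}-x_{k+2}=h$;
\item $\inprod{g}{u-g}$;
\item $\sqnorm{s_{k+1}}$, where
\[
s_{k+1}=q\,u-2q(u-g)+(u-g-h)=(1-q)u+(2q-1)g-h,
\qquad q=\tfrac{N}{N+1}.
\]
\end{itemize}

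Each side is then a quadratic form in $(u,g,h)$. I will match the six coefficients of $\sqnorm{u}$, $\sqnorm{g}$, $\sqnorm{h}$, $\inprod{u}{g}$, $\inprod{u}{h}$ and $\inprod{g}{h}$.

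The common factor $q^{N-k-1}$ can be pulled out of every coefficient: $\widetilde V_k$, $\lambda_{k+1,\cdot}$ and $\alpha_{k+1}$ all carry $q^{N-k-1}$ or $q^{N-k}=q\cdot q^{N-k-1}$. After dividing by it and multiplying through by $N(N+1)^2$, each of the six equations becomes a polynomial identity in $N$ and $k$. I will check these by hand (they are low degree) or via \texttt{sympy} as in the accompanying notebook. This coefficient matching is the main obstacle: it is routine but error-prone.

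I expect the $\sqnorm{h}$ coefficient to be the easiest check. It reads
\[
-a_{k+1}=-\lambda_{k+1,k+2}+\alpha_{k+1},
\]
that is,
\[
\frac{(k+1)(N-1)}{N(N+1)}=\frac{2(k+1)}{N+1}-\frac{k+1}{N},
\]
which holds. I will use it first to confirm the normalization before doing the remaining five.

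Once the identity is established, the right-hand side of \eqref{eqn:ppm-lyapunov-difference} is nonnegative, by monotonicity of $\opA$ (using $\topa x_i\in\opA x_i$ and $0\in\opA x_\star$) and since $\alpha_{k+1}\ge 0$. Telescoping gives $\widetilde V_N\le\cdots\le\widetilde V_0$.

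The endpoints evaluate as follows.
\begin{itemize}
\item At $k=0$: $\widetilde V_0=q^N\frac{1}{N+1}\sqnorm{x_0-x_\star}=\tau_N\sqnorm{x_0-x_\star}$.
\item At $k=N$: $\widetilde V_N=\frac{N-1}{N+1}\sqnorm{\topa x_{N+1}}+\frac{2}{N+1}\inprod{\topa x_{N+1}}{x_N-x_\star}$.
\end{itemize}

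For the last step, write $x_N-x_\star=\topa x_{N+1}+(x_{N+1}-x_\star)$. Then
\[
\widetilde V_N=\sqnorm{\topa x_{N+1}}+\frac{2}{N+1}\inprod{\topa x_{N+1}}{x_{N+1}-x_\star}\ge \sqnorm{\topa x_{N+1}},
\]
by the terminal monotonicity inequality. Combining gives $\sqnorm{\topa x_{N+1}}\le \tau_N\sqnorm{x_0-x_\star}$. Finally, $\tau_N=\frac{q^N}{N+1}=\frac{1}{(1+1/N)^N(N+1)}$, which is the claimed bound.
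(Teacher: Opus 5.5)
Your proposal is correct and follows essentially the same route as the paper. The paper also verifies \eqref{eqn:ppm-lyapunov-difference} by expanding both sides in the independent vectors $x_k-x_\star$, $\topa x_{k+1}$, $\topa x_{k+2}$ (delegating the coefficient matching to \texttt{sympy}), then evaluates $\widetilde V_0=\tau_N\sqnorm{x_0-x_\star}$ and rewrites $\widetilde V_N=\sqnorm{\topa x_{N+1}}+\frac{2}{N+1}\inprod{\topa x_{N+1}}{x_{N+1}-x_\star}\ge\sqnorm{\topa x_{N+1}}$ exactly as you do. The five coefficient identities you left unchecked do hold: after dividing by $q^{N-k-1}$ and multiplying by $N(N+1)^2$, both sides agree, for example $k+1$ for $\sqnorm{u}$ and $2N^2-2k-2$ for $\inprod{u}{g}$.
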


\begin{proof}
The identity \eqref{eqn:ppm-lyapunov-difference} is verified symbolically in
\path{examples_lyapunov/ppm_monotone/ppm_monotone_example_lyap.ipynb}.
It is obtained by expanding both sides in the independent vectors
$x_k-x_\star$, $\topa x_{k+1}$, and $\topa x_{k+2}$, using
$x_{k+1}=x_k-\topa x_{k+1}$ and $x_{k+2}=x_{k+1}-\topa x_{k+2}$.

Observe that when $k=0$, we have $\widetilde{V}_0 = \frac{1}{N+1} \left( \frac{N}{N+1} \right)^N \sqnorm{x_0-x_\star}$, 
while when $k=N$, using $x_N - x_\star = (x_{N+1} - x_\star) + \topa x_{N+1}$, we have
\begin{align*}
    \widetilde{V}_N & = \frac{N-1}{N+1} \sqnorm{\topa x_{N+1}} + \frac{2}{N+1} \inprod{\topa x_{N+1}}{x_{N+1} - x_\star + \topa x_{N+1}} \\
    & = \sqnorm{\topa x_{N+1}} + \frac{2}{N+1}\inprod{\topa x_{N+1}}{x_{N+1}-x_\star} \\
    & \ge \sqnorm{\topa x_{N+1}}
\end{align*}
by monotonicity of $\opA$.
Therefore, we conclude that $\sqnorm{\topa x_{N+1}} \le \widetilde{V}_N \le \widetilde{V}_0$ as desired.
\end{proof}

\paragraph{Remark.}
This convergence guarantee has been presented in the form of PEP-style proof without any specification of Lyapunov function in \cite{guTightSublinearConvergence2020}, which also provided a matching lower bound construction showing that the rate is tight.

\subsection{\dualoc}
\label{section:dual-oc-halpern}

Recall the setting of Section~\ref{section:os-ppm-and-oc-halpern}, where $\opA\colon \reals^d \rightrightarrows \reals^d$ is a maximally $\mu$-strongly monotone operator with a zero $x_\star$.
Let $\gamma = 1+2\mu$.
The \ref{alg:oc-halpern} algorithm with $\opT = \left(1 + \frac{1}{\gamma}\right) \opJ_\opA - \frac{1}{\gamma} \opI$ achieved the exact optimal residual guarantee 
\(\sqnorm{\topa x_N}\le \Gamma_N^{-2}\sqnorm{y_0-x_\star}\) \citep{ParkRyu2022_exact}, which we recovered in \cref{proposition:oc-halpern}.
When $\mu=0 \iff \gamma=1$, \ref{alg:oc-halpern} reduces to \ref{alg:ohm}, and in that setting, \ref{alg:dual-ohm} shares the same exact optimal residual guarantee at the final iteration, as we recovered in \cref{proposition:dual-ohm}.
The two algorithms, \ref{alg:ohm} and \ref{alg:dual-ohm} are in the so-called \emph{H-dual} relationship---their step size matrices are anti-diagonal transposes of each other---and 
the correspondence between H-dual algorithms' convergence guarantees is referred to as \emph{H-duality} \citep{KimOzdaglarParkRyu2023_timereversed,YoonKimSuhRyu2024_optimal}.
In this section, we extend this observation to the case $\mu>0 \iff \gamma>1$ by analyzing the novel algorithm \emph{dual optimal contractive Halpern}:
\begin{equation}   \label{alg:dual-oc-halpern}  \tag{\dualoc}
    y_{k+1} = y_k + \pr{1 - \frac{1}{\varphi_{N-k-1}} }\bigl(\opT y_k - \opT y_{k-1}\bigr)
\end{equation}
for $k=0,\dots,N-2$, where $\varphi_k = \sum_{i=0}^{k} \gamma^{2i}$ and $\opT y_{-1} = y_0$. 
Specifically, we show that \ref{alg:dual-oc-halpern} exhibits the convergence guarantee $\sqnorm{\topa x_N}\le \Gamma_N^{-2}\sqnorm{y_0-x_\star}$,
exactly the same as \ref{alg:oc-halpern} (\cref{theorem:dual-oc-halpern}).

\paragraph{Remark.}
We do not formally show that \ref{alg:dual-oc-halpern} is the H-dual of \ref{alg:oc-halpern}, or establish an H-duality theorem generalizing the correspondence between the two algorithms, 
as it is not the focus of the current work.\bigskip

As before, let $\Gamma_N=\sum_{i=0}^{N-1}\gamma^i, x_k=\opJ_\opA y_{k-1}$ and $\topa x_k=y_{k-1}-x_k\in\opA x_k$.
The PEP-style proof identified by the solver takes the form
\begin{align*}
    \sqnorm{\topa x_N}-\frac{1}{\Gamma_N^2}\sqnorm{y_0-x_\star}
    &=
    -\sum_{k=1}^{N-1}\lambda_{k,N}
    \left(
    \inprod{\topa x_k-\topa x_N}{x_k-x_N}
    -\mu\sqnorm{x_k-x_N}
    \right) \notag\\
    &\quad
    -\lambda_{N,\star}
    \left(
    \inprod{\topa x_N}{x_N-x_\star}
    -\mu\sqnorm{x_N-x_\star}
    \right) \notag\\
    &\quad
    -\frac{\gamma^{-N}}{\Gamma_N^2}
    \sqnorm{\Gamma_N\topa x_N+(x_N-y_0)-\gamma^N(x_N-x_\star)},
\end{align*}
where
\begin{align}
\label{eqn:dual-oc-halpern-lambda-k-N}
    \lambda_{k,N} = \frac{2(1+\gamma^{-N})}{1+\gamma} \frac{\gamma^{2(N-k)}}{\varphi_{N-k}\varphi_{N-k-1}} = \frac{4}{\Gamma_N^2} \frac{1}{\lambda_{N-k,N-k+1}^{\text{(OC-Halpern)}}} ,
\end{align}
exhibiting the similar relationship between the inequality multipliers used in the proofs of \ref{alg:ohm} and \ref{alg:dual-ohm}.
Furthermore, $\lambda_{N,\star} = \frac{2}{\Gamma_N}$ and the final square term is identical to the \ref{alg:oc-halpern} case.
We will not need these analytic expressions until the final stage, Step~5.
As in \ref{alg:dual-ohm}, we have \(\cI=\{(k,N)\,|\,k=1,\dots,N-1\}\cup \{(N,\star)\}\), and $|\cJ| = 1$.

\paragraph{Step 1.}
Take \(\cI_0=\cJ_0=\emptyset\), and for \(k=1,\dots,N-1\),
\[
    \cI_k=\{(j,N)\,|\,j=1,\dots,k\},
    \qquad
    \cJ_k=\emptyset .
\]

\paragraph{Step 2.}
This choice clearly satisfies sufficiency. 
Numerically, we observe that \(\vV_0=0\) and the ranks of \(\vV_k\) are \(2,4,\dots,4,2\) for \(k=1,\dots,N-1\) as in the case of \ref{alg:dual-ohm}.

\paragraph{Step 3.}
We construct the same set of special vectors as \ref{alg:dual-ohm}:
\[
    \cT
    =
    \{\vy_0,\dots,\vy_{N-1},\vx_\star,\vx_1,\dots,\vx_N,\vg_1,\dots,\vg_N,\vt_0,\dots,\vt_{N-1}\},
    \qquad
    \cS=\cT\cup\{\vu-\vv:\vu,\vv\in\cT\} .
\]

\paragraph{Step 4.}
For $k=1,\dots,N-2$, we numerically identify that
\[
    \{\vg_N,\; \vy_0-\vx_N,\; \vy_k-\vy_{N-1},\; \vt_{k-1}-\vt_{N-1}\}
    \subset \cS\cap\cR(\vV_k)
\]
is a basis of $\cR(\vV_k)$.
The first two directions have coefficients that are constant over \(k\), and do not yield inner-product terms with the last two vectors.
Therefore, we hypothesize that by subtracting these constant terms involving $\topa x_N$ and $y_0 - x_N$, we have a translated lyapunov function of the form
\begin{align*}
    \widetilde{V}_k = a_k \sqnorm{y_k - y_{N-1}} + b \inprod{y_k - y_{N-1}}{\opT y_{k-1} - \opT y_{N-1}} + c_k \sqnorm{\opT y_{k-1} - \opT y_{N-1}} .
\end{align*}
Here, we enforce $b\in \reals$ to be constant across iterations as numerical observation suggests.

\paragraph{Step 5.}
We solve the equation
\begin{align}
\label{eqn:dual-oc-halpern-difference}
    \widetilde V_k-\widetilde V_{k+1}
    =
    \lambda_{k+1,N}
    \left(
    \inprod{\topa x_{k+1}-\topa x_N}{x_{k+1}-x_N}
    -\mu\sqnorm{x_{k+1}-x_N}
    \right)
\end{align}
for the coefficients $a_k, c_k, a_{k+1}, c_{k+1}$ and $b$, using the analytic formula \eqref{eqn:dual-oc-halpern-lambda-k-N} for $\lambda_{k+1,N}$.
The solution that yields consistent expressions for $a_k, a_{k+1}$ and $c_k, c_{k+1}$ is unique:
\begin{gather*}
    a_k = -C_N \frac{\varphi_{N-k}}{\varphi_{N-k}-1} , \qquad a_{k+1} = -C_N \frac{\varphi_{N-k-1}}{\varphi_{N-k-1}-1} , \qquad b = 2C_N \\
    c_k = -C_N \frac{\varphi_{N-k-1}-1}{\varphi_{N-k-1}} , \qquad c_{k+1} = -C_N \frac{\varphi_{N-k-2}-1}{\varphi_{N-k-2}}
\end{gather*}
where $C_N=\frac{\gamma^2(1+\gamma^{-N})}{(1+\gamma)^2}$.
We summarize the result as follows.

\begin{theorem} \label{theorem:dual-oc-halpern}
Let $\opA\colon\reals^d\rightrightarrows\reals^d$ be maximally $\mu$-strongly monotone with $\mu\ge 0$, and let $\gamma=1+2\mu$.
Let $x_\star$ satisfy $0 \in \opA x_\star$, set $y_\star=x_\star$, and let $y_0\in\reals^d$ be an initial point.
Let \(\varphi_k=\sum_{i=0}^{k}\gamma^{2i}\), \(\Gamma_N=\sum_{i=0}^{N-1}\gamma^i\), and \(C_N=\frac{\gamma^2(1+\gamma^{-N})}{(1+\gamma)^2}\).
Let $x_k,y_k$ be generated by \eqref{alg:dual-oc-halpern} with $\opT = \left(1 + \frac{1}{\gamma}\right) \opJ_\opA - \frac{1}{\gamma} \opI$, with $x_k=\opJ_\opA y_{k-1}$ and $\topa x_k=y_{k-1}-x_k$.
Define $\widetilde V_k$ by
\begin{align}
\label{eqn:dual-oc-halpern-lyapunov}
    \widetilde V_k = 
    C_N\bigg(
    &-\frac{\varphi_{N-k}}{\varphi_{N-k}-1}\sqnorm{y_k-y_{N-1}}
    +2\inprod{y_k-y_{N-1}}{\opT y_{k-1}-\opT y_{N-1}}
    -\frac{\varphi_{N-k-1}-1}{\varphi_{N-k-1}}
    \sqnorm{\opT y_{k-1}-\opT y_{N-1}}
    \bigg)
\end{align}
for $k=0,\dots,N-1$.
Then \(\widetilde V_{N-1}=0\), and for \(k=0,\dots,N-2\), \eqref{eqn:dual-oc-halpern-difference} holds with $\lambda_{k,N}$ defined as in \eqref{eqn:dual-oc-halpern-lambda-k-N}.
Consequently, \(\widetilde V_{N-1}\le\dots\le\widetilde V_0\), and
\[
    \sqnorm{\topa x_N}\le \frac{1}{\Gamma_N^2}\sqnorm{y_0-x_\star}.
\]
\end{theorem}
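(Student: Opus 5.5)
The plan mirrors the proof of \cref{proposition:dual-ohm}: establish the one-step identity \eqref{eqn:dual-oc-halpern-difference}, telescope it, and then close the argument with the two leftover terms of the PEP-style proof.

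\textbf{Step (a): $\widetilde V_{N-1}=0$ and the reduction of the identity.} First note that $\widetilde V_{N-1}=0$ is immediate, since both differences $y_{N-1}-y_{N-1}$ and $\opT y_{N-2}-\opT y_{N-1}$ appearing in \eqref{eqn:dual-oc-halpern-lyapunov} are paired with coefficients making every term vanish. Indeed, the first two terms vanish, and the third has coefficient $\frac{\varphi_0-1}{\varphi_0}=0$ because $\varphi_0=1$.

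Next, to verify \eqref{eqn:dual-oc-halpern-difference} for $0\le k\le N-2$, I would express everything in the independent directions $u:=y_k-y_{N-1}$, $v:=\opT y_{k-1}-\opT y_{N-1}$ and $w:=\opT y_k-\opT y_{N-1}$. The update rule gives
\[
y_{k+1}-y_{N-1}=u+\beta(w-v), \qquad \beta=1-\frac{1}{\varphi_{N-k-1}},
\]
so $\widetilde V_{k+1}$ is a quadratic form in $(u,v,w)$.

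On the right-hand side, use $x_j=\frac{\gamma}{1+\gamma}(\opT y_{j-1}+\frac1\gamma y_{j-1})$ and $\topa x_j=y_{j-1}-x_j=\frac{\gamma}{1+\gamma}(y_{j-1}-\opT y_{j-1})$. Then $x_{k+1}-x_N$ and $\topa x_{k+1}-\topa x_N$ are linear in $u$ and $w$ alone. The right-hand side is therefore $\lambda_{k+1,N}\frac{\gamma^2}{(1+\gamma)^2}\bigl(\tfrac1{\gamma^2}\sqnorm{u}-\sqnorm{w}\bigr)$, using the equivalence displayed in Section~\ref{section:setting-proximal-monotone-inclusion}, which equates the strongly monotone inequality with $\gamma^{-2}\sqnorm{u}-\sqnorm{w}$ up to the factor $\frac{\gamma^2}{(1+\gamma)^2}$. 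I would check this constant carefully.

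\textbf{Step (b): coefficient matching.} The identity then reduces to matching the six coefficients of $\sqnorm{u},\sqnorm{v},\sqnorm{w},\inprod{u}{v},\inprod{u}{w},\inprod{v}{w}$. Each becomes a scalar identity in $\varphi_{N-k},\varphi_{N-k-1},\varphi_{N-k-2}$. These should follow from the recurrence $\varphi_{m+1}-1=\gamma^2\varphi_m$ (used in Section~\ref{section:os-ppm-and-oc-halpern}) together with the formula \eqref{eqn:dual-oc-halpern-lambda-k-N}. This bookkeeping is the main obstacle.

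I would first check the $\sqnorm{v}$ and $\inprod{u}{v}$ coefficients, which must cancel exactly since the right-hand side has no $v$. Write $m=N-k-1$. The $\inprod{u}{v}$ coefficient is
\[
2C_N\bigl(1-1-\ldots\bigr)
\]
and determines consistency between $a_{k+1}\beta$ and $b$. After that I would do the $\sqnorm{w}$ coefficient, which yields $\lambda_{k+1,N}$ in the form $\frac{\gamma^{2m+2}}{\varphi_{m+1}\varphi_m}$. If the algebra becomes heavy, I would fall back to the symbolic verification route used throughout the paper, in a \texttt{sympy} notebook.

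\textbf{Step (c): conclusion.} By telescoping, $0=\widetilde V_{N-1}\le\widetilde V_0$. Now compute $\widetilde V_0$ using $\opT y_{-1}=y_0$ and $\opT y_{N-1}=x_N-\frac1\gamma\topa x_N$, $y_{N-1}=x_N+\topa x_N$. With these, $\widetilde V_0$ becomes a quadratic in $\topa x_N$ and $y_0-x_N$, with $\varphi_N$ and $\varphi_{N-1}$ in the coefficients.

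Finally, add $\lambda_{N,\star}\bigl(\inprod{\topa x_N}{x_N-x_\star}-\mu\sqnorm{x_N-x_\star}\bigr)\ge 0$ with $\lambda_{N,\star}=\frac{2}{\Gamma_N}$, together with the nonnegative square
\[
\frac{\gamma^{-N}}{\Gamma_N^2}\sqnorm{\Gamma_N\topa x_N+(x_N-y_0)-\gamma^N(x_N-x_\star)}.
\]
I expect the sum to equal $\frac{1}{\Gamma_N^2}\sqnorm{y_0-x_\star}-\sqnorm{\topa x_N}$, exactly as in the final step of \cref{proposition:oc-halpern}. This yields the claimed rate. I would verify this last polynomial identity using $\varphi_{N-1}=\frac{\gamma^{2N}-1}{\gamma^2-1}$, $\Gamma_N=\frac{\gamma^N-1}{\gamma-1}$ and $\mu=\frac{\gamma-1}{2}$.
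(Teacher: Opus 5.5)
Your plan is the paper's proof. You eliminate $y_{k+1}$ with the update rule, match coefficients using the $\varphi$-recurrences (the paper delegates this step to \texttt{sympy}), telescope, and close with the terminal strong monotonicity inequality and the square term. Your anticipated final identity is exactly the one the paper verifies.

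Two slips need fixing before your coefficient matching goes through.
\begin{itemize}
\item The right-hand side of \eqref{eqn:dual-oc-halpern-difference} equals $\lambda_{k+1,N}\frac{1}{2(1+\gamma)}\bigl(\sqnorm{u}-\gamma^2\sqnorm{w}\bigr)$. So the factor in front of $\gamma^{-2}\sqnorm{u}-\sqnorm{w}$ is $\frac{\gamma^2}{2(1+\gamma)}$, not $\frac{\gamma^2}{(1+\gamma)^2}$. The two agree only at $\gamma=1$, so with your constant the matching would fail for $\gamma>1$. The $\inprod{u}{w}$ term drops out because $\mu=\frac{\gamma-1}{2}$.
\item With $m=N-k-1$, the multiplier is $\lambda_{k+1,N}\propto\frac{\gamma^{2m}}{\varphi_m\varphi_{m-1}}$. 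The expression $\frac{\gamma^{2m+2}}{\varphi_{m+1}\varphi_m}$ you wrote is the profile of $\lambda_{k,N}$.
\end{itemize}

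With these corrections the hand computation in $(u,v,w)$ is short. Every cross term and the $\sqnorm{v}$ term cancel identically, and you get $\widetilde V_k-\widetilde V_{k+1}=C_N\frac{\gamma^{2m}}{\varphi_m\varphi_{m-1}}\bigl(\gamma^{-2}\sqnorm{u}-\sqnorm{w}\bigr)$. This uses the two scalar identities $\frac{1}{\varphi_{m-1}}-\frac{1}{\varphi_m}=\frac{\gamma^{2m}}{\varphi_m\varphi_{m-1}}$ and $\frac{\varphi_m}{\varphi_m-1}-\frac{\varphi_{m+1}}{\varphi_{m+1}-1}=\frac{\gamma^{2m-2}}{\varphi_m\varphi_{m-1}}$. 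Both follow from $\varphi_m-\gamma^2\varphi_{m-1}=1$ and $\varphi_m-\varphi_{m-1}=\gamma^{2m}$. The result matches the corrected right-hand side exactly.
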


\begin{proof}
The identity \eqref{eqn:dual-oc-halpern-difference} is verified symbolically in 
\texttt{dual\_oc\_halpern\_example\_lyap.ipynb}.
The verification follows by eliminating \(y_{k+1}\) using the update rule, and applying the identities 
\(\varphi_m-\varphi_{m-1}=\gamma^{2m}\) and \(\varphi_m-1=\gamma^2\varphi_{m-1}\) for $m\ge 0$ to check all terms algebraically cancel out.
Then, strong monotonicity of \(\opA\) immediately implies \(\widetilde V_{N-1} \le \dots \le \widetilde V_0\), and \(\widetilde V_{N-1}=0\) follows directly from \(\varphi_0=1\).

It remains to show that this implies the final bound on the squared residual norm.
This is established once we verify the identity
\begin{align*}
    & \sqnorm{\topa x_N} - \frac{1}{\Gamma_N^2}\sqnorm{y_0-x_\star} \\
    & = \widetilde V_{N-1} - \widetilde V_0 - \frac{2}{\Gamma_N}
    \left( \inprod{\topa x_N}{x_N-x_\star} - \mu\sqnorm{x_N-x_\star} \right) 
    - \frac{\gamma^{-N}}{\Gamma_N^2} \sqnorm{\Gamma_N\topa x_N+(x_N-y_0)-\gamma^N(x_N-x_\star)} ,
\end{align*}
as the right hand side is a sum of nonpositive terms.
This is symbolically verified within the same notebook.
\end{proof}

%% file: sections/6_conclusion.tex
\section{Conclusion}

We propose a principled approach toward searching for Lyapunov functions in optimization.
This is achieved by combining the well-known framework leveraging computer assistance for identifying tight proof certificates with the novel procedure for decomposing them into layers via constructing partial sums and checking their admissibility. 
This suggests a new viewpoint on Lyapunov analysis in general, as a stratification of the best possible proof that can be attempted within a specific search space.
Whether an analogous treatment of Lyapunov analysis will be possible in other contexts, such as continuous-time optimization analysis \cite{SuBoydCandes2014_differential, 
SuBoydCandes2016_differential, 
KricheneBayenBartlett2015_accelerated, WibisonoWilsonJordan2016_variational, AttouchChbaniPeypouquetRedont2018_fast, DiakonikolasJordan2021_generalized, WilsonRechtJordan2021_lyapunov, SuhRohRyu2022_continuoustime, BotCsetnekNguyen2023_fast, MoucerTaylorBach2023_systematic, SuhParkRyu2023_continuoustime},
would be an interesting question for future work.

There are several open avenues that are more closely relevant, within the scope of first-order optimization.
Our framework currently does not cover the cases of algorithms with block structures, which do not progress linearly with the iteration numbers, 
such as accelerated algorithms without momentum, including those using long/silver step-size schedules 
\cite{grimmerProvablyFasterGradient2024, altschulerAccelerationStepsizeHedging2025, altschulerAccelerationStepsizeHedging2025a, grimmerAcceleratedObjectiveGap2025, GrimmerShuWang2025_composing, BokAltschuler2025_accelerating, ParkRoySiegelBhattacharya2025_acceleration, zhangAnytimeAccelerationGradient2025, WangMaYangZhou2026_relaxed, ZhangJiang2026_accelerated} 
and the recently discovered optimal fixed-point algorithms with similar fractal step-sizes \cite{yoon2026theorycompositiondualityextremal}. 
We also do not capture the analyses where ergodic average of iterates are taken in the end, with the final convergence guarantee being established via Jensen's inequality.
We foresee that our work could be extended to handle such settings by adjusting the partial sum formulation or admissibility criteria.

%% file: sections/appendix_background.tex
\section{Problem Settings Omitted in Section~\ref{section:background}}

\subsection{Smooth convex-concave minimax optimization via Lipschitz monotone operators}
\label{section:minimax-optimization}

Finally, consider an unconstrained smooth convex-concave minimax optimization problem
\[
\begin{array}{cc}
    \underset{u\in\reals^{d_u}}{\text{minimize}}\,\,\,
    \underset{v\in\reals^{d_v}}{\text{maximize}}
    & \phi(u,v),
\end{array}
\]
where $\phi$ is convex in $u$ when $v$ is fixed, concave in $v$ when $u$ is fixed, differentiable, and has $L$-Lipschitz continuous gradient.
Writing $x=(u,v)\in\reals^{d_u} \times \reals^{d_v}$, the associated saddle-gradient operator is
\begin{align}
\label{eqn:saddle-operator}
    \opA(x)
    =
    \begin{bmatrix}
        \nabla_u\phi(u,v)\\
        -\nabla_v\phi(u,v)
    \end{bmatrix}.
\end{align}
Convexity-concavity implies that $\opA$ is monotone \cite{rockafellarMonotoneOperatorsAssociated1970}, while the Lipschitz continuity of $\nabla\phi$ implies that $\opA$ is $L$-Lipschitz.
Furthermore, the goal of minimax optimization is to find $x_\star = (u_\star, v_\star)$ satisfying
\[
    \phi(u_\star, v) \le \phi(u_\star, v_\star) \le \phi(u, v_\star)
\]
for all $(u,v) \in \reals^{d_u} \times \reals^{d_v}$, and this condition is equivalent to $\opA(x_\star) = 0$ as $\phi$ is convex-concave.
Therefore, we can recast the problem (with slight generalization) into the monotone inclusion
\[
\begin{array}{cc}
    \underset{x\in\reals^{d}}{\text{find}} & 0=\opA x 
\end{array}
\]
where $d = d_u+d_v$ and $\opA\colon \reals^d \to \reals^d$ is monotone and $L$-Lipschitz.
We consider the \textit{explicit}, or \textit{forward} algorithms with update rule
\[
    x_{k+1} = x_k - \sum_{j=0}^k h_{k+1,j} \opA x_j 
\]
for $k=0,1,\dots$ and $j=0,\dots,k$.
The inequalities that we use in this problem class are the pairwise inequalities 
\[
    \inprod{\opA x_i-\opA x_j}{x_i-x_j} \ge 0,
    \qquad
    L^2\sqnorm{x_i-x_j}-\sqnorm{\opA x_i-\opA x_j} \ge 0 
\]
where $x_i, x_j$ are either the algorithm's iterates or the minimax optimum $x_\star$.
Here, let us note that unlike the settings introduced earlier, we do not have an interpolation theorem showing that a finite set of tuples $(x_i, g_i)$ pairwise satisfying $\inprod{g_i - g_j}{x_i - x_j} \ge 0$ and $L^2 \sqnorm{x_i - x_j} - \sqnorm{g_i - g_j} \ge 0$ can be realized by a monotone and $L$-Lipschitz operator $\opA\colon \reals^d \to \reals^d$, i.e., $\opA x_i = g_i$ for all $i$ in that finite index set.
In fact, there is a simple three-point counterexample where this fails \citep[Proposition~3]{RyuTaylorBergelingGiselsson2020_operator}.
Therefore, the formulation below based on the pairwise monotonicity and Lipschitzness inequalities should be interpreted as optimizing over the class of proof using only those inequalities, together with sum-of-squares inequalities, which is not necessarily the tightest worst-case guarantee with a matching lower-bound construction.

For $i\in \{0,\dots,N,\star\}$, let $g_i = \opA x_i$ (in particular, $g_\star = 0$).
Let $\vP = \bmat{x_0 & x_\star & g_0 & \cdots & g_N} \in \reals^{d\times (N+3)}$ and let $\vx_0 = \ve_1, \vx_\star = \ve_2, \vg_i = \ve_{i+3}$ for $i=0,\dots,N$ and $\vg_\star = 0$, so that
$\vP\vg_i = g_i = \opA x_i$ for $i\in \{0,\dots,N,\star\}$.
Define $\vx_k$ as follows for $k=1,\dots,N$ so that $\vP\vx_k = x_k$:
\[
    \vx_{k+1} = \vx_k - \sum_{j=0}^k h_{k+1,j} \vg_j .
\]
Let $\vG=\vP^\transpose\vP$ and define
\[
    \vM_{i,j} := \mathrm{Sym}\left((\vg_i-\vg_j)(\vx_i-\vx_j)^\transpose\right) ,
    \qquad
    \vL_{i,j} := L^2(\vx_i-\vx_j)(\vx_i-\vx_j)^\transpose - (\vg_i-\vg_j)(\vg_i-\vg_j)^\transpose 
\]
for $i,j \in \{0,\dots,N,\star\}$, so that $\mathrm{Tr}(\vG\vM_{i,j})$ and $\mathrm{Tr}(\vG\vL_{i,j})$ are respectively the monotonicity and Lipschitzness inequality terms.
We measure the performance of the algorithm in terms of the squared operator (residual) norm, so we are interested in the guarantee of the form $\sqnorm{\opA x_N}\le\nu\sqnorm{x_0-x_\star}$.
A PEP-style proof for this is given by the identity
\[
\begin{aligned}
    & \mathrm{Tr}(\vG\vg_N\vg_N^\transpose)
    -\nu\mathrm{Tr}\left(\vG(\vx_0-\vx_\star)(\vx_0-\vx_\star)^\transpose\right) \\
    & =
    -\sum_{\substack{i,j \in \{0,\dots,N,\star\}\\ i<j}}\lambda_{i,j}^\cM
    \mathrm{Tr}(\vG\vM_{i,j})
    -\sum_{\substack{i,j \in \{0,\dots,N,\star\}\\ i<j}}\lambda_{i,j}^\cL
    \mathrm{Tr}(\vG\vL_{i,j})
    -\sum_{\ell\in\cJ}\alpha_\ell\mathrm{Tr}(\vG\vS_\ell)
\end{aligned}
\]
with nonnegative coefficients $\lambda_{i,j}^\cM, \lambda_{i,j}^\cL$ and $\alpha_\ell$.

\subsection{Composite minimization with Bregman relative smoothness}
\label{section:setting-composite-minimization-bregman}

Let $h:\reals^d\to\reals\cup\{+\infty\}$ be a proper, closed and convex function of Legendre type \cite[Section~26]{Rockafellar1970_convex}, satisfying: $C=\operatorname{int}\operatorname{dom}h\neq\emptyset$, $h$ is differentiable and strictly convex on $C$ and $\lim_{i\to \infty} \norm{\nabla h(y_i)} = +\infty$ whenever $y_i \in C, y_i \to y \in \partial C$.
For such $h$, the Bregman distance 
\[
    D_h(x,y) = h(x)-h(y)-\inprod{\nabla h(y)}{x-y}.
\]
is well defined on $\operatorname{dom}h\times C$.
We consider the composite minimization problem over this Bregman domain
\[
\begin{array}{cc}
    \underset{x\in C}{\text{minimize}} & F(x) = \bregsm(x)+\bregprox(x) .
\end{array}
\]
We assume that $\bregprox$ is proper, closed and convex with $\operatorname{dom}\bregprox\cap C \neq \emptyset$, which will be the proximable term for which the proximal operation can be computed.
We assume that $\bregsm$ is convex, $C\subset\operatorname{dom}\bregsm$, $\bregsm$ differentiable on $C$, and that $\bregsm$ is relatively $L$-smooth with respect to $h$ on $C$ \citep{BauschkeBolteTeboulle2017_descent,LuFreundNesterov2018_relatively}, i.e., for any $x,y \in C$,
\[
    \bregsm(x) - \bregsm(y) - \inprod{\nabla \bregsm(y)}{x-y} - L D_h(x,y) \le 0 .
\]
The relative smoothness condition is equivalent to the function
\[
    \bregaux:=h-\frac{1}{L}\bregsm
\]
being convex on $C$ \citep{LuFreundNesterov2018_relatively}.
For this problem class, we focus on the PEP formulation for the \textit{Bregman proximal-gradient method (BPGM)} rather than handling the general algorithm classes as in the previously introduced settings.
Starting from $x_0\in C$, we consider BPGM with standard step-size $1/L$, whose update rule is given by
\begin{equation*}
\tag{BPGM}
    x_{k+1}\in\argmin_{y\in C}
    \left\{
        \bregprox(y)+\inprod{\nabla \bregsm(x_k)}{y-x_k}
        +L D_h(y,x_k)
    \right\} .
\end{equation*}
Here, we assume that each subproblem defining the BPGM update admits a selected minimizer $x_{k+1}\in C$; standard sufficient conditions for this are discussed in \citep[Lemma~2]{BauschkeBolteTeboulle2017_descent}. 
Rewriting the first-order optimality condition for the BPGM update, there exists 
$\tnabla \bregprox(x_{k+1})\in\partial \bregprox(x_{k+1})$ satisfying
\begin{equation}
\label{eq:appendix-bpgm-optimality}
    \tnabla \bregprox(x_{k+1})
    =
    - \nabla \bregsm(x_k)
    -L\left(\nabla h(x_{k+1})-\nabla h(x_k)\right).
\end{equation}
The PEP formulation for this setting features basic convexity inequalities.
Let $x_{N+1}:=x_{\compidx}$, where $x_{\compidx}\in\dom F\cap\dom h$ satisfies $D_h(x_{\compidx},x_0)<\infty$ and is the arbitrary reference point appearing in the convergence guarantee.
For $q=\bregsm,h,\bregaux$, define the corresponding value vectors $\vbregsm,\vh,\vbregaux$ by
\[
    \vq=\bmat{q(x_0) & \cdots & q(x_N) & q(x_{\compidx})}^\transpose\in\reals^{N+2},
\]
with entries $\vq_i=q(x_i)$ for $i=0,\dots,N+1$.
For the nonsmooth term, the BPGM optimality conditions provide subgradients at the output points $x_1,\dots,x_N$.
Thus, let
\[
    \vbregprox=\bmat{\bregprox(x_1) & \cdots & \bregprox(x_N) & \bregprox(x_{\compidx})}^\transpose\in\reals^{N+1},
\]
with entries denoted by $\vbregprox_i=\bregprox(x_i)$ for $i=1,\dots,N+1$. 
For $\bregprox$, we only use convexity inequalities whose subgradient is taken at one of the BPGM output points:
\[
    \cC_{\bregprox}(x_i,x_j)
    :=
    \bregprox(x_j)-\bregprox(x_i)+\inprod{\tnabla \bregprox(x_j)}{x_i-x_j}\le 0,
    \qquad i=1,\dots,N+1,\quad j=1,\dots,N.
\]
For $q\in\{\bregsm,h\}$, we use
\[
    \cC_q(x_i,x_j)
    :=
    q(x_j)-q(x_i)+\inprod{\nabla q(x_j)}{x_i-x_j}\le 0,
    \qquad i=0,\dots,N+1,\quad j=0,\dots,N.
\]
For $\bregaux$, whose convexity is imposed on $C$, we use the iterate-index inequalities
\[
    \cC_{\bregaux}(x_i,x_j)
    :=
    \bregaux(x_j)-\bregaux(x_i)+\inprod{\nabla \bregaux(x_j)}{x_i-x_j}\le 0,
    \qquad i,j=0,\dots,N.
\]
Since $\bregaux=h-\frac{1}{L}\bregsm$, the corresponding function value coordinates satisfy $\vbregaux_i=\vh_i-\frac{1}{L}\vbregsm_i$ for $i=0,\dots,N+1$. 
Let
\[
\begin{aligned}
    \vP
    =
    \Bigl[
     x_0,\ldots,x_N, x_{\compidx},\
    \nabla \bregsm(x_0),\ldots,\nabla \bregsm(x_N),
    \nabla h(x_0),\ldots,\nabla h(x_N)
    \Bigr]
    \in \reals^{d\times(3N+4)} .
\end{aligned}
\]
Using standard basis vectors in $\reals^{3N+4}$, define the point coordinates
\[
    \vx_i=\ve_{i+1},
    \qquad i=0,\dots,N+1,
\]
and the gradient coordinates
\[
    \vxi_i^{\bregsm}=\ve_{N+3+i},
    \qquad
    \vxi_i^h=\ve_{2N+4+i},
    \qquad i=0,\dots,N.
\]
For $\bregaux=h-\frac{1}{L}\bregsm$ and for the selected subgradients of $\bregprox$ given by \eqref{eq:appendix-bpgm-optimality}, set
\[
    \vxi_i^{\bregaux}=\vxi_i^h-\frac{1}{L}\vxi_i^{\bregsm},
    \qquad i=0,\dots,N,
\]
and
\[
    \vxi_i^{\bregprox}=-\vxi_{i-1}^{\bregsm}-L\left(\vxi_i^h-\vxi_{i-1}^h\right),
    \qquad i=1,\dots,N.
\]
Then $\vP\vx_i=x_i$ for $i=0,\dots,N+1$, $\vP\vxi_i^{\bregprox}=\tnabla \bregprox(x_i)$ for $i=1,\dots,N$, and $\vP\vxi_i^q=\nabla q(x_i)$ for $q\in\{\bregsm,h,\bregaux\}$ and $i=0,\dots,N$.
With $\vG=\vP^\transpose\vP\in\mathbb{S}_+^{3N+4}$, define
\[
    \vC_{i,j}^q
    :=
    \mathrm{Sym}\left(\vxi_j^q(\vx_i-\vx_j)^\transpose\right)\in\mathbb{S}^{3N+4},
\]
whenever the coordinate vector $\vxi_j^q$ has been defined.
The initial Bregman distance is represented by the scalar coordinate
\[
    \vd
    :=
    \vh_{N+1}-\vh_0-\mathrm{Tr}\left(\vG\vC_{N+1,0}^h\right).
\]
Consequently, a PEP-style proof of the rate
$F(x_N)-F(x_{\compidx})\le\nu D_h(x_{\compidx},x_0)$ has the form
\[
\begin{aligned}
    \vbregsm_N+\vbregprox_N-\vbregsm_{N+1}-\vbregprox_{N+1}
    -\nu\vd
    &=
    \sum_{i=1}^{N+1}\sum_{j=1}^{N}
    \lambda_{i,j}^{\bregprox}
    \left(\vbregprox_j-\vbregprox_i+\mathrm{Tr}\left(\vG\vC_{i,j}^{\bregprox}\right)\right) \\
    &
    +\sum_{q\in\{\bregsm,h\}}\sum_{i=0}^{N+1}\sum_{j=0}^{N}
    \lambda_{i,j}^q
    \left(\vq_j-\vq_i+\mathrm{Tr}\left(\vG\vC_{i,j}^q\right)\right) \\
    &    
    +\sum_{i=0}^{N}\sum_{j=0}^{N}
    \lambda_{i,j}^{\bregaux}
    \left(\vbregaux_j-\vbregaux_i+\mathrm{Tr}\left(\vG\vC_{i,j}^{\bregaux}\right)\right) 
    -\sum_{i\in\cJ}\alpha_i\mathrm{Tr}(\vG\vS_i).
\end{aligned}
\]
The coefficients $\lambda_{i,j}^{\bregprox}$, $\lambda_{i,j}^q$, and $\alpha_i$ are nonnegative, and $\vS_i=\vs_i\vs_i^\transpose\in\mathbb{S}_+^{3N+4}$.
The BPGM example in \cref{section:bpgm} is obtained by selecting the active interpolation inequalities in this template.

%% file: sections/appendix_linear_algebra_prop.tex
\section{Proof of \cref{proposition:linear-algebra-of-lyapunov}}
\label{appendix:lyapunov-proposition-proof}

\begin{proof}
The identity $V_k=\Tr(\vG\vV_k)$ follows directly from
\eqref{eqn:Vk-definition-monotone-proximal} and the definition of $\vV_k$.
Because $\vG=\vP^\transpose \vP$, once we show the decomposition $\vV_k = \vB_k \vA_k \vB_k^\transpose$ with $\vB_k=\begin{bmatrix} \vv_{k,1} & \cdots & \vv_{k,r_k} \end{bmatrix} \in \reals^{(N+2)\times r_k}$, 
it follows that 
\[
    V_k
    =
    \Tr(\vP^\transpose \vP \vB_k \vA_k \vB_k^\transpose)
    =
    \Tr(\vB_k^\transpose \vP^\transpose \vP \vB_k \vA_k)
    =
    \sum_{p,q=1}^{r_k}(\vA_k)_{p,q}
    \left( \vB_k^\transpose \vP^\transpose \vP \vB_k \right)_{p,q}
    =
    \sum_{p,q=1}^{r_k}(\vA_k)_{p,q}
    \inprod{\vP\vv_{k,p}}{\vP\vv_{k,q}} .
\]
which gives \eqref{eqn:Lyapunov-expression-quadratic-form}.

It remains to prove that $\vV_k = \vB_k \vA_k \vB_k^\transpose$ for some $\vA_k \in \mathbb{S}^{r_k}$. If $r_k=0$, then $\vV_k=\mathbf{0}$ and the claim is trivial. Otherwise, because the columns of $\vB_k$ form a basis of $\cR(\vV_k)$, the matrix $\boldsymbol{\Pi}_k := \vB_k(\vB_k^\transpose \vB_k)^{-1}\vB_k^\transpose \in \mathbb{S}^{N+2}$ is the orthogonal projection matrix onto $\cR(\vV_k)$. 
This implies that $\boldsymbol{\Pi}_k \vV_k = \vV_k$, and because $\vV_k$ is symmetric, we also have $\vV_k = \vV_k \boldsymbol{\Pi}_k$. Therefore,
\[
    \vV_k = \boldsymbol{\Pi}_k \vV_k \boldsymbol{\Pi}_k = \vB_k \underbrace{
        (\vB_k^\transpose \vB_k)^{-1}\vB_k^\transpose
        \vV_k
        \vB_k(\vB_k^\transpose \vB_k)^{-1}
    }_{:=\vA_k} \vB_k^\transpose = \vB_k \vA_k \vB_k^\transpose
\]
where $\vA_k \in \reals^{r_k \times r_k}$ is clearly symmetric.

\end{proof}

%% file: sections/appendix_bppm_tightness.tex
\section{Tightness of the BPPM rate}
\label{appendix:bppm-tightness}

\begin{proof}[Proof of \Cref{lemma:bppm-tightness}]
We first construct a one-dimensional instance.
Fix a parameter $0<\delta<\frac12$, to be chosen sufficiently small at the end.
Define the smoothed absolute value
\[
    s_\delta(x)
    :=
    \begin{cases}
        \dfrac{x^2}{2\delta}+\dfrac{\delta}{2}, & |x|\le\delta,\\[0.6em]
        |x|, & |x|>\delta.
    \end{cases}
\]
Define $\widetilde h_\delta:\reals\to\reals$ by
\[
\widetilde h_\delta(x)
:=
\begin{cases}
\dfrac{N}{2}s_\delta(x+1)-\dfrac{N+2}{2}x-\dfrac{N}{2},
    & x<-\dfrac12,\\[0.8em]
s_\delta(x), & |x|\le\dfrac12,\\[0.8em]
\dfrac{N}{2}s_\delta(x-1)+\dfrac{N+2}{2}x-\dfrac{N}{2},
    & x>\dfrac12.
\end{cases}
\]
A direct differentiation gives
\begin{equation}
\widetilde h_\delta'(x)
=
\begin{cases}
-(N+1), & x\le -1-\delta,\\
-\dfrac{N+2}{2}+\dfrac{N}{2\delta}(x+1),
    & -1-\delta\le x\le -1+\delta,\\[0.4em]
-1, & -1+\delta\le x\le -\delta,\\
\dfrac{x}{\delta}, & |x|\le\delta,\\[0.4em]
1, & \delta\le x\le 1-\delta,\\
\dfrac{N+2}{2}+\dfrac{N}{2\delta}(x-1),
    & 1-\delta\le x\le 1+\delta,\\[0.4em]
N+1, & x\ge 1+\delta.
\end{cases}
    \label{eq:bppm-tight-h-tilde-derivative}
\end{equation}
Thus $\widetilde h_\delta$ is continuously differentiable and convex.
Set
\begin{equation}
    h_\delta(x)
    =
    \frac{1}{L}\left(\widetilde h_\delta(x)+\frac{\delta}{2}x^2\right).
    \label{eq:bppm-tight-h-def}
\end{equation}
The function $h_\delta$ is $\delta/L$-strongly convex, its derivative is strictly increasing, and $h_\delta'(x)\to\pm\infty$ as $x\to\pm\infty$.
Hence $h_\delta$ is a Legendre function, and every BPPM subproblem has a unique minimizer.

Now choose the function and initial point as
\[
    \bregprox(x)=|x|,
    \qquad
    x_0=-1-\delta.
\]
Then $x_\star=0$ is a minimizer of $\bregprox$.
We first show by induction that, for $0\le k\le N$,
\begin{equation}
    x_k\in[-1-\delta,-1+\delta],
    \qquad
    Lh_\delta'(x_k)=-(N+1-k)-\delta(1+\delta).
    \label{eq:bppm-tight-induction}
\end{equation}
The claim is true for $k=0$ by the choice $x_0=-1-\delta$ and \eqref{eq:bppm-tight-h-tilde-derivative}.
Suppose it holds for some $k<N$.
The optimality condition of the BPPM update \eqref{eq:bppm_main} gives
\begin{equation*}
    0\in \partial\bregprox(x_{k+1})
    +L\bigl(h_\delta'(x_{k+1})-h_\delta'(x_k)\bigr),
\end{equation*}
or equivalently
\begin{equation}
    Lh_\delta'(x_{k+1})
    \in
    -\partial\bregprox(x_{k+1})+Lh_\delta'(x_k).
    \label{eq:bppm-tight-optimality}
\end{equation}
Since $-\partial\bregprox(x_{k+1})\subset[-1,1]$, the induction hypothesis implies
\[
    Lh_\delta'(x_{k+1})
    \le
    1+Lh_\delta'(x_k)
    =-(N-k)-\delta(1+\delta)
    <0.
\]
As $Lh_\delta'$ is strictly increasing and $Lh_\delta'(0)=0$, this shows that $x_{k+1}<0$.
Therefore $\partial\bregprox(x_{k+1})=\{-1\}$, and \eqref{eq:bppm-tight-optimality} reduces to
\[
    Lh_\delta'(x_{k+1})
    =
    1+Lh_\delta'(x_k)
    =
    -(N-k)-\delta(1+\delta).
\]
Moreover,
\[
    Lh_\delta'(-1-\delta)=-(N+1)-\delta(1+\delta),
    \qquad
    Lh_\delta'(-1+\delta)=-1-\delta(1-\delta).
\]
Since $0\le k\le N-1$,
\[
    Lh_\delta'(-1-\delta)
    <
    -(N-k)-\delta(1+\delta)
    \le
    Lh_\delta'(-1+\delta).
\]
As $Lh_\delta'$ is increasing, this implies $x_{k+1}\in[-1-\delta,-1+\delta]$.
Together with the identity for $Lh_\delta'(x_{k+1})$ above, this proves \eqref{eq:bppm-tight-induction} with $k$ replaced by $k+1$ and completes the induction.

On the interval $[-1-\delta,-1+\delta]$, \eqref{eq:bppm-tight-h-def} and \eqref{eq:bppm-tight-h-tilde-derivative} give
\begin{equation}
    Lh_\delta'(x)
    =
    -\frac{N+2}{2}+\frac{N}{2\delta}(x+1)+\delta x.
    \label{eq:bppm-tight-h-affine-branch}
\end{equation}
Substituting $x=x_k$ in \eqref{eq:bppm-tight-h-affine-branch} and comparing with \eqref{eq:bppm-tight-induction} gives $\left(\frac{N}{2\delta}+\delta\right)(x_k+1+\delta)=k$, and hence
\begin{equation}
    x_k=-1-\delta+\frac{2\delta}{N+2\delta^2}k,
    \qquad 0\le k\le N.
    \label{eq:bppm-tight-trajectory}
\end{equation}

Because $x_N<0$,
\begin{equation}
    \bregprox(x_N)-\bregprox(x_\star)
    =
    1+\delta-\frac{2\delta N}{N+2\delta^2}.
    \label{eq:bppm-tight-gap}
\end{equation}
For the initial Bregman distance, note that $\widetilde h_\delta(0)=\frac{\delta}{2}$, $\widetilde h_\delta(x_0)=1+(N+1)\delta$, and $\widetilde h_\delta'(x_0)=-(N+1)$, hence $D_{\widetilde h_\delta}(0,x_0)=N+\frac{\delta}{2}$.
The quadratic term in \eqref{eq:bppm-tight-h-def} yields
\begin{equation}
\begin{aligned}
    D_{h_\delta}(x_\star,x_0)
    &=
    \frac{1}{L}\left(
        N+\frac{\delta}{2}
        +\frac{\delta}{2}(1+\delta)^2
    \right)
    =
    \frac{1}{L}\left(
        N+\delta+\delta^2+\frac{\delta^3}{2}
    \right).
\end{aligned}
\label{eq:bppm-tight-divergence}
\end{equation}
Combining \eqref{eq:bppm-tight-gap} and \eqref{eq:bppm-tight-divergence},
\[
\frac{N\bigl(\bregprox(x_N)-\bregprox(x_\star)\bigr)}
     {L D_{h_\delta}(x_\star,x_0)}
=
\frac{N\left(1+\delta-\dfrac{2\delta N}{N+2\delta^2}\right)}
     {N+\delta+\delta^2+\dfrac{\delta^3}{2}}
\longrightarrow 1
\qquad\text{as }\delta\downarrow0.
\]
Thus, for the prescribed $\varepsilon$, one can choose $0<\delta<\frac12$ sufficiently small so that
\[
    \bregprox(x_N)-\bregprox(x_\star)
    \ge
    (1-\varepsilon)\frac{L}{N}D_{h_\delta}(x_\star,x_0).
\]
This proves the claim for dimension $d=1$.

For an arbitrary dimension $d>1$, define, for $z=(z_1,\ldots,z_d)$,
\[
    \widehat{\bregprox}(z)=\bregprox(z_1)+\frac12\sum_{j=2}^d z_j^2,
    \qquad
    \widehat h(z)=h_\delta(z_1)+\frac{1}{2L}\sum_{j=2}^d z_j^2,
\]
and take $z_0=(x_0,0,\ldots,0)$.
Then $\widehat{\bregprox}$ is proper, closed, and convex, $\widehat h$ is Legendre, the unique minimizer is $z_\star=0$, and the BPPM iterates satisfy $z_k=(x_k,0,\ldots,0)$.
Moreover, $D_{\widehat h}(z_\star,z_0)=D_{h_\delta}(x_\star,x_0)$, so the same lower bound holds in dimension $d$.
\end{proof}

%% file: ref.bib
@article{AttouchChbaniPeypouquetRedont2018_fast,
  title = {Fast Convergence of Inertial Dynamics and Algorithms with Asymptotic Vanishing Viscosity},
  author = {Attouch, Hedy and Chbani, Zaki and Peypouquet, Juan and Redont, Patrick},
  year = {2018},
  journal = {Mathematical Programming},
  volume = {168},
  number = {1},
  pages = {123--175},
  urldate = {2022-01-05},
  langid = {english}
}

@article{DasGuptaVanParysRyu2023_branchandbound,
  title = {Branch-and-Bound Performance Estimation Programming: A Unified Methodology for Constructing Optimal Optimization Methods},
  author = {Das Gupta, Shuvomoy and Van Parys, Bart P. G. and Ryu, Ernest K.},
  year = {2023},
  journal = {Mathematical Programming}
}

@article{Diakonikolas2020_halpern,
  title = {Halpern Iteration for Near-Optimal and Parameter-Free Monotone Inclusion and Strong Solutions to Variational Inequalities},
  author = {Diakonikolas, Jelena},
  year = {2020},
  journal = {Conference on Learning Theory}
}

@article{DiakonikolasWang2022_potential,
  title = {Potential Function-Based Framework for Making the Gradients Small in Convex and Min-Max Optimization},
  author = {Diakonikolas, Jelena and Wang, Puqian},
  year = {2022},
  journal = {SIAM Journal on Optimization}
}

@article{Drori2017_exact,
  title = {The Exact Information-Based Complexity of Smooth Convex Minimization},
  author = {Drori, Yoel},
  year = {2017},
  journal = {Journal of Complexity},
  volume = {39},
  pages = {1--16}
}

@article{DroriTeboulle2014_performance,
  title = {Performance of First-Order Methods for Smooth Convex Minimization: {{A}} Novel Approach},
  author = {Drori, Yoel and Teboulle, Marc},
  year = {2014},
  journal = {Mathematical Programming},
  volume = {145},
  number = {1},
  pages = {451--482}
}

@article{DroriTaylor2020_efficient,
  title = {Efficient First-Order Methods for Convex Minimization: A Constructive Approach},
  author = {Drori, Yoel and Taylor, Adrien B.\},
  year = {2020},
  journal = {Mathematical Programming},
  volume = {184},
  number = {1--2},
  pages = {183--220},
  publisher = {Springer}
}

@article{GorbunovLoizouGidel2022_extragradient,
  title = {Extragradient Method: {$O(1/K)$} Last-Iterate Convergence for Monotone Variational Inequalities and Connections with Cocoercivity},
  author = {Gorbunov, Eduard and Loizou, Nicolas and Gidel, Gauthier},
  year = {2022},
  journal = {International Conference on Artificial Intelligence and Statistics}
}

@article{JangGuptaRyu2025_computerassisted,
  title = {Computer-Assisted Design of Accelerated Composite Optimization Methods: {{OptISTA}}},
  author = {Jang, Uijeong and Gupta, Shuvomoy Das and Ryu, Ernest K.},
  year = {2025},
  journal = {Mathematical Programming}
}

@article{KimFessler2016_optimized,
  title = {Optimized First-Order Methods for Smooth Convex Minimization},
  author = {Kim, Donghwan and Fessler, Jeffrey A.},
  year = {2016},
  journal = {Mathematical Programming},
  volume = {159},
  number = {1-2},
  pages = {81--107}
}

@article{Kim2021_accelerated,
  title = {Accelerated Proximal Point Method for Maximally Monotone Operators},
  author = {Kim, Donghwan},
  year = {2021},
  journal = {Mathematical Programming},
  volume = {190},
  number = {1--2},
  pages = {57--87}
}

@article{KimFessler2021_optimizing,
  title = {Optimizing the Efficiency of First-Order Methods for Decreasing the Gradient of Smooth Convex Functions},
  author = {Kim, Donghwan and Fessler, Jeffrey A.},
  year = {2021},
  journal = {Journal of Optimization Theory and Applications},
  volume = {188},
  number = {1},
  pages = {192--219}
}

@article{KimOzdaglarParkRyu2023_timereversed,
  title = {Time-Reversed Dissipation Induces Duality between Minimizing Gradient Norm and Function Value},
  author = {Kim, Jaeyeon and Ozdaglar, Asuman E. and Park, Chanwoo and Ryu, Ernest K.},
  year = {2023},
  journal = {Neural Information Processing Systems}
}

@article{KimYang2023_convergence,
  title = {Convergence Analysis of {{ODE}} Models for Accelerated First-Order Methods via Positive Semidefinite Kernels},
  author = {Kim, Jungbin and Yang, Insoon},
  year = {2023},
  journal = {NeurIPS}
}

@article{KimYang2023_unifying,
  title = {Unifying {{Nesterov}}'s Accelerated Gradient Methods for Convex and Strongly Convex Objective Functions},
  author = {Kim, Jungbin and Yang, Insoon},
  year = {2023},
  journal = {International Conference on Machine Learning},
}

@article{LeeKim2021_fast,
  title = {Fast Extra Gradient Methods for Smooth Structured Nonconvex-Nonconcave Minimax Problems},
  author = {Lee, Sucheol and Kim, Donghwan},
  year = {2021},
  journal = {Neural Information Processing Systems}
}

@article{LeeParkRyu2021_geometric,
  title = {A Geometric Structure of Acceleration and Its Role in Making Gradients Small Fast},
  author = {Lee, Jongmin and Park, Chanwoo and Ryu, Ernest K.},
  year = {2021},
  journal = {Neural Information Processing Systems},
  eprint = {2106.10439},
  primaryclass = {math.OC},
  archiveprefix = {arxiv}
}

@article{Lieder2021_convergence,
  title = {On the Convergence Rate of the {{Halpern-iteration}}},
  author = {Lieder, Felix},
  year = {2021},
  journal = {Optimization Letters},
  volume = {15},
  number = {2},
  pages = {405--418}
}

@article{Nesterov1983_method,
  title = {A Method of Solving a Convex Programming Problem with Convergence Rate {$O(1/k^2)$}},
  author = {Nesterov, Yurii},
  year = {1983},
  journal = {Doklady Akademii Nauk SSSR},
  volume = {269},
  number = {3},
  pages = {543--547}
}

@book{Nesterov2004_introductory,
  title = {Introductory {{Lectures}} on {{Convex Optimization}}: {{A Basic Course}}},
  author = {Nesterov, Y.},
  year = {2004},
  publisher = {{Springer}},
  xxxpublisher = {Kluwer Academic Publ.}
}

@article{ParkRyu2022_exact,
  title = {Exact Optimal Accelerated Complexity for Fixed-Point Iterations},
  author = {Park, Jisun and Ryu, Ernest K.},
  year = {2022},
  journal = {International Conference on Machine Learning}
}

@article{RyuTaylorBergelingGiselsson2020_operator,
  title = {Operator Splitting Performance Estimation: {{Tight}} Contraction Factors and Optimal Parameter Selection},
  author = {Ryu, Ernest K. and Taylor, Adrien B. and Bergeling, Carolina and Giselsson, Pontus},
  year = {2020},
  journal = {SIAM Journal on Optimization},
  volume = {30},
  number = {3},
  pages = {2251--2271},
  publisher = {{Society for Industrial and Applied Mathematics}},
  urldate = {2021-09-26}
}

@article{SuBoydCandes2014_differential,
  title = {A Differential Equation for Modeling {{Nesterov}}'s Accelerated Gradient Method: {{Theory}} and Insights},
  author = {Su, Weijie and Boyd, Stephen and Cand{\`e}s, Emmanuel J.},
  year = {2014},
  journal = {Neural Information Processing Systems}
}

@article{SuBoydCandes2016_differential,
  title = {A Differential Equation for Modeling {{Nesterov}}'s Accelerated Gradient Method: {{Theory}} and Insights},
  author = {Su, Weijie and Boyd, Stephen and Cand{\`e}s, Emmanuel J.},
  year = {2016},
  journal = {Journal of Machine Learning Research},
  volume = {17},
  number = {153},
  pages = {1--43}
}

@article{SuhParkRyu2023_continuoustime,
  title = {Continuous-Time Analysis of Anchor Acceleration},
  author = {Suh, Jaewook J. and Park, Jisun and Ryu, Ernest K.},
  year = {2023},
  journal = {Neural Information Processing Systems}
}

@article{SuhRohRyu2022_continuoustime,
  title = {Continuous-Time Analysis of {{AGM}} via Conservation Laws in Dilated Coordinate Systems},
  author = {Suh, Jaewook J. and Roh, Gyumin and Ryu, Ernest K.},
  year = {2022},
  journal = {International Conference on Machine Learning},
  copyright = {Creative Commons Attribution Non Commercial No Derivatives 4.0 International}
}

@article{TaylorBach2019_stochastic,
  title = {Stochastic First-Order Methods: Non-Asymptotic and Computer-Aided Analyses via Potential Functions},
  author = {Taylor, Adrien and Bach, Francis},
  year = {2019},
  journal = {Conference on Learning Theory}
}

@article{TaylorDrori2023_optimal,
  title = {An Optimal Gradient Method for Smooth Strongly Convex Minimization},
  author = {Taylor, Adrien and Drori, Yoel},
  year = {2023},
  journal = {Mathematical Programming},
  volume = {199},
  number = {1},
  pages = {557--594}
}

@article{TaylorHendrickxGlineur2017_smooth,
  title = {Smooth Strongly Convex Interpolation and Exact Worst-Case Performance of First-Order Methods},
  author = {Taylor, Adrien B. and Hendrickx, Julien M. and Glineur, Fran{\c c}ois},
  year = {2017},
  journal = {Mathematical Programming},
  volume = {161},
  number = {1},
  pages = {307--345}
}

@article{Tran-DinhLuo2021_halperntype,
  title = {Halpern-Type Accelerated and Splitting Algorithms for Monotone Inclusions},
  author = {{Tran-Dinh}, Quoc and Luo, Yang},
  year = {2021},
  journal = {arXiv:2110.08150},
  eprint = {2110.08150},
  archiveprefix = {arxiv}
}

@article{VanScoyFreemanLynch2018_fastest,
  title = {The Fastest Known Globally Convergent First-Order Method for Minimizing Strongly Convex Functions},
  author = {Van Scoy, Bryan and Freeman, Randy A. and Lynch, Kevin M.},
  year = {2018},
  journal = {IEEE Control Systems Letters},
  volume = {2},
  number = {1},
  pages = {49--54}
}

@article{WilsonRechtJordan2021_lyapunov,
  title = {A {{Lyapunov}} Analysis of Accelerated Methods in Optimization},
  author = {Wilson, Ashia C. and Recht, Ben and Jordan, Michael I.},
  year = {2021},
  journal = {Journal of Machine Learning Research},
  volume = {22},
  number = {113},
  pages = {1--34},
  urldate = {2022-01-05}
}

@article{YoonRyu2021_accelerated,
  title = {Accelerated Algorithms for Smooth Convex-Concave Minimax Problems with $\mathcal{O}(1/k^2)$ Rate on Squared Gradient Norm},
  author = {Yoon, TaeHo and Ryu, Ernest K.},
  year = {2021},
  journal = {International Conference on Machine Learning},
  series = {Proceedings of Machine Learning Research}
}

@article{KricheneBayenBartlett2015_accelerated,
  title = {Accelerated Mirror Descent in Continuous and Discrete Time},
  author = {Krichene, Walid and Bayen, Alexandre and Bartlett, Peter L},
  year = {2015},
  journal = {Neural Information Processing Systems}
}

@article{BotCsetnekNguyen2023_fast,
  title = {Fast Optimistic Gradient Descent Ascent ({{OGDA}}) Method in Continuous and Discrete Time},
  author = {Bo{\c t}, Radu Ioan and Csetnek, Ern{\"o} Robert and Nguyen, Dang-Khoa},
  year = {2023},
  journal = {Foundations of Computational Mathematics}
}

@article{YoonKimSuhRyu2024_optimal,
  title={Optimal acceleration for minimax and fixed-point problems is not unique},
  author={Yoon, TaeHo and Kim, Jaeyeon and Suh, Jaewook J and Ryu, Ernest K.},
  journal={International Conference on Machine Learning},
  year={2024}
}

@article{yoonAcceleratedMinimaxAlgorithms2025,
  title = {Accelerated Minimax Algorithms Flock Together},
  author = {Yoon, TaeHo and Ryu, Ernest K.},
  year = 2025,
  journal = {SIAM Journal on Optimization},
  volume = {35},
  number = {1},
  pages = {180--209},
  doi = {10.1137/22M1504597}
}

@article{YoonRyuGrimmerInvariance2025,
  title={H-invariance theory: A complete characterization of minimax optimal fixed-point algorithms},
  author={Yoon, TaeHo and Ryu, Ernest K.\ and Grimmer, Benjamin},
  journal={arXiv:2511.14915},
  year={2025}
}

@article{dAspremontScieurTaylor2021_acceleration,
  title = {Acceleration Methods},
  author = {{d'Aspremont}, Alexandre and Scieur, Damien and Taylor, Adrien},
  year = {2021},
  journal = {Foundations and Trends{\textregistered} in Optimization},
  volume = {5},
  number = {1-2},
  pages = {1--245},
  publisher = {Now Publishers}
}

@article{ParkParkRyu2023_factorsqrt2,
  title = {Factor-${\sqrt{2}}$ Acceleration of Accelerated Gradient Methods},
  author = {Park, Chanwoo and Park, Jisun and Ryu, Ernest K.},
  year = {2023},
  journal = {Applied Mathematics \& Optimization},
  volume = {88},
  number = {3},
  pages = {77}
}

@article{CZ1992,
author = {Censor, Yair and Zenios, Stavros},
year = {1992},
month = {05},
pages = {451-464},
title = {On the proximal minimization algorithm with D-Functions},
volume = {73},
journal = {Journal of Optimization Theory and Applications},
doi = {10.1007/BF00940051}
}

@article{UpadhyayaBanertTaylorGiselsson2025_automated,
  title = {Automated Tight {{Lyapunov}} Analysis for First-Order Methods},
  author = {Upadhyaya, Manu and Banert, Sebastian and Taylor, Adrien B. and Giselsson, Pontus},
  year = 2025,
  journal = {Mathematical Programming},
  volume = {209},
  number = {1},
  pages = {133--170},
  issn = {1436-4646},
  doi = {10.1007/s10107-024-02061-8}
}

@article{TeboulleVaisbourd2023_elementary,
  title = {An Elementary Approach to Tight Worst Case Complexity Analysis of Gradient Based Methods},
  author = {Teboulle, Marc and Vaisbourd, Yakov},
  year = 2023,
  month = sep,
  journal = {Mathematical Programming},
  volume = {201},
  number = {1},
  pages = {63--96},
  issn = {1436-4646},
  doi = {10.1007/s10107-022-01899-0}
}

@inproceedings{SuhYingJiangNguyen2025_pepflow,
  title = {{{PEPFlow}}: A Python Library for the Workflow of Performance Estimation of Optimization Algorithms},
  booktitle = {{{NeurIPS}} Workshop on {{GPU-accelerated}} and Scalable Optimization},
  author = {Suh, Jaewook J. and Ying, Bicheng and Jiang, Xin and Nguyen, Edward Duc Hien},
  year = 2025
}

@article{lessardAnalysisDesignOptimization2016,
  title = {Analysis and Design of Optimization Algorithms via Integral Quadratic Constraints},
  author = {Lessard, Laurent and Recht, Benjamin and Packard, Andrew},
  year = 2016,
  journal = {SIAM Journal on Optimization},
  volume = {26},
  number = {1},
  pages = {57--95},
  publisher = {SIAM}
}

@article{GuYang2025_tight,
  title = {Tight Convergence Rate in Subgradient Norm of the Proximal Point Algorithm},
  author = {Gu, Guoyong and Yang, Junfeng},
  year = 2025,
  journal = {Optimization},
  pages = {1--15},
  publisher = {Taylor \& Francis},
  issn = {0233-1934},
  doi = {10.1080/02331934.2025.2602877}
}

@article{guTightSublinearConvergence2020,
  title = {Tight Sublinear Convergence Rate of the Proximal Point Algorithm for Maximal Monotone Inclusion Problems},
  author = {Gu, Guoyong and Yang, Junfeng},
  year = 2020,
  journal = {SIAM Journal on Optimization},
  volume = {30},
  number = {3},
  pages = {1905--1921},
  publisher = {{Society for Industrial and Applied Mathematics}},
  urldate = {2021-09-27}
}

@article{altschulerAccelerationStepsizeHedging2025,
  title = {Acceleration by Stepsize Hedging: {{Multi-step}} Descent and the Silver Stepsize Schedule},
  author = {Altschuler, Jason M. and Parrilo, Pablo A.},
  year = 2025,
  journal = {Journal of The ACM},
  volume = {72},
  number = {2},
  publisher = {Association for Computing Machinery},
  address = {New York, NY, USA}
}

@article{altschulerAccelerationStepsizeHedging2025a,
  title = {Acceleration by Stepsize Hedging: {{Silver Stepsize Schedule}} for Smooth Convex Optimization},
  author = {Altschuler, Jason M. and Parrilo, Pablo A.},
  year = 2025,
  journal = {Mathematical Programming},
  volume = {213},
  number = {1},
  pages = {1105--1118}
}

@article{grimmerAcceleratedObjectiveGap2025,
  title = {Accelerated Objective Gap and Gradient Norm Convergence for Gradient Descent via Long Steps},
  author = {Grimmer, Benjamin and Shu, Kevin and Wang, Alex L.},
  year = 2025,
  journal = {INFORMS Journal on Optimization},
  volume = {7},
  number = {2},
  eprint = {https://doi.org/10.1287/ijoo.2024.0057},
  pages = {156--169}
}

@book{RyuYin2022_largescale,
  title = {Large-Scale Convex Optimization: {{Algorithms}} \& Analyses via Monotone Operators},
  author = {Ryu, Ernest K. and Yin, Wotao},
  year = 2022,
  publisher = {Cambridge University Press},
  address = {Cambridge}
}

@article{BousselmiHendrickxGlineur2024_interpolation,
  title = {Interpolation Conditions for Linear Operators and Applications to Performance Estimation Problems},
  author = {Bousselmi, Nizar and Hendrickx, Julien M. and Glineur, Fran{\c c}ois},
  year = {2024},
  month = sep,
  journal = {SIAM Journal on Optimization},
  volume = {34},
  number = {3},
  pages = {3033--3063},
  publisher = {{Society for Industrial and Applied Mathematics}},
  issn = {1052-6234},
  doi = {10.1137/23M1575391},
  urldate = {2025-04-12},
}

@article{TaylorVanScoyLessard2018_lyapunov,
  title = {Lyapunov Functions for First-Order Methods: {{Tight}} Automated Convergence Guarantees},
  author = {Taylor, Adrien and Van Scoy, Bryan and Lessard, Laurent},
  year = {2018},
  journal = {International Conference on Machine Learning},
  series = {Proceedings of Machine Learning Research}
}

@article{UpadhyayaGuptaTaylorBanertGiselsson2026_autolyap,
  title = {The {{AutoLyap}} Software Suite for Computer-Assisted {{Lyapunov}} Analyses of First-Order Methods},
  author = {Upadhyaya, Manu and Gupta, Shuvomoy Das and Taylor, Adrien B. and Banert, Sebastian and Giselsson, Pontus},
  year = 2026,
  journal = {arXiv:2506.24076},
  eprint = {2506.24076},
  primaryclass = {math.OC},
  archiveprefix = {arXiv}
}

@article{ParkRyu2024_optimal,
  title = {Optimal First-Order Algorithms as a Function of Inequalities},
  author = {Park, Chanwoo and Ryu, Ernest K.},
  year = {2024},
  journal = {Journal of Machine Learning Research},
  volume = {25},
  number = {51},
  pages = {1--66}
}

@article{BansalGupta2019_potentialfunction,
  title = {Potential-Function Proofs for Gradient Methods},
  author = {Bansal, Nikhil and Gupta, Anupam},
  year = 2019,
  journal = {Theory of Computing},
  volume = {15},
  number = {4},
  pages = {1--32},
  publisher = {Theory of Computing},
  doi = {10.4086/toc.2019.v015a004}
}

@misc{yoon2026theorycompositiondualityextremal,
  title = {A Theory of Composition and Duality of Extremal Optimal Fixed-Point Algorithms},
  author = {Yoon, TaeHo and Grimmer, Benjamin},
  year = 2026,
  howpublished = {arXiv:2605.02231},
  eprint = {2605.02231},
  archiveprefix = {arXiv}
}

@article{dragomirOptimalComplexityCertification2022,
  title = {Optimal Complexity and Certification of {{Bregman}} First-Order Methods},
  author = {Dragomir, Radu-Alexandru and Taylor, Adrien B. and {d'Aspremont}, Alexandre and Bolte, J{\'e}r{\^o}me},
  year = 2022,
  journal = {Mathematical Programming},
  volume = {194},
  number = {1},
  pages = {41--83}
}

@article{TaylorHendrickxGlineur2018_exact,
  title = {Exact Worst-Case Convergence Rates of the Proximal Gradient Method for Composite Convex Minimization},
  author = {Taylor, Adrien B and Hendrickx, Julien M and Glineur, Fran{\c c}ois},
  year = 2018,
  journal = {Journal of Optimization Theory and Applications},
  volume = {178},
  number = {2},
  pages = {455--476},
  publisher = {Springer}
}

@article{ContrerasCominetti2023_optimal,
  title = {Optimal Error Bounds for Non-Expansive Fixed-Point Iterations in Normed Spaces},
  author = {Contreras, Juan Pablo and Cominetti, Roberto},
  year = 2023,
  journal = {Mathematical Programming},
  volume = {199},
  number = {1},
  pages = {343--374},
  issn = {1436-4646},
  doi = {10.1007/s10107-022-01830-7},
}

@article{grimmerProvablyFasterGradient2024,
  title = {Provably Faster Gradient Descent via Long Steps},
  author = {Grimmer, Benjamin},
  year = 2024,
  journal = {SIAM Journal on Optimization},
  volume = {34},
  number = {3},
  pages = {2588--2608}
}

@article{gorbunovLastiterateConvergenceOptimistic2022,
  title = {Last-Iterate Convergence of Optimistic Gradient Method for Monotone Variational Inequalities},
  author = {Gorbunov, Eduard and Taylor, Adrien and Gidel, Gauthier},
  editor = {Oh, Alice H. and Agarwal, Alekh and Belgrave, Danielle and Cho, Kyunghyun},
  year = 2022,
  journal = {Neural Information Processing Systems}
}

@article{zhangAnytimeAccelerationGradient2025,
  title = {Anytime Acceleration of Gradient Descent},
  author = {Zhang, Zihan and Lee, Jason and Du, Simon and Chen, Yuxin},
  editor = {Haghtalab, Nika and Moitra, Ankur},
  year = 2025,
  journal = {Conference on Learning Theory},
  series = {Proceedings of Machine Learning Research},
  publisher = {PMLR}
}

@article{GrimmerShuWang2025_composing,
  title = {Composing Optimized Stepsize Schedules for Gradient Descent},
  author = {Grimmer, Benjamin and Shu, Kevin and Wang, Alex L.},
  year = 2025,
  journal = {Mathematics of Operations Research},
  publisher = {INFORMS},
}

@article{BokAltschuler2025_accelerating,
  title = {Accelerating Proximal Gradient Descent via Silver Stepsizes},
  author = {Bok, Jinho and Altschuler, Jason M.},
  year = {2025},
  journal = {Conference on Learning Theory}
}

@article{ParkRoySiegelBhattacharya2025_acceleration,
  title = {Acceleration via Silver Step-Size on {{Riemannian}} Manifolds with Applications to {{Wasserstein}} Space},
  author = {Park, Jiyoung and Roy, Abhishek and Siegel, Jonathan W. and Bhattacharya, Anirban},
  year = {2025},
  journal = {Neural Information Processing Systems},
  eprint = {2506.06160},
  primaryclass = {math.OC},
  archiveprefix = {arXiv}
}

@article{WangMaYangZhou2026_relaxed,
  title = {Relaxed Proximal Point Algorithm: {{Tight}} Complexity Bounds and Acceleration Without Momentum},
  author = {Wang, Bofan and Ma, Shiqian and Yang, Junfeng and Zhou, Danqing},
  year = {2026},
  journal = {INFORMS Journal on Optimization},
  volume = {8},
  number = {2},
  pages = {141--162},
  publisher = {INFORMS},
  issn = {2575-1484},
  doi = {10.1287/ijoo.2025.0075},
  urldate = {2026-06-22}
}

@article{ZhangJiang2026_accelerated,
  title = {Accelerated Gradient Descent by Concatenation of Stepsize Schedules},
  author = {Zhang, Zehao and Jiang, Rujun},
  year = {2026},
  journal = {arXiv:2410.12395},
  eprint = {2410.12395},
  archiveprefix = {arXiv}
}

@article{goujaudFundamentalProofStructures2023,
  title = {On Fundamental Proof Structures in First-Order Optimization},
  author = {Goujaud, Baptiste and Dieuleveut, Aymeric and Taylor, Adrien},
  year = 2023,
  journal = {Conference on Decision and Control},
}

@incollection{rockafellarMonotoneOperatorsAssociated1970,
  title = {Monotone Operators Associated with Saddle-Functions and Minimax Problems},
  booktitle = {Nonlinear {{Functional Analysis}}, {{Part}} 1},
  author = {Rockafellar, R. T.},
  editor = {Browder, F. E.},
  year = 1970,
  series = {Proceedings of {{Symposia}} in {{Pure Mathematics}}},
  volume = {18},
  pages = {241--250},
  publisher = {American Mathematical Society},
  aseries = {Proc. Symp. in Pure Math.},
  xxxpublisher = {American Mathematical Society}
}

@article{BauschkeBolteTeboulle2017_descent,
  title = {A Descent Lemma Beyond {{Lipschitz}} Gradient Continuity: {{First}}-Order Methods Revisited and Applications},
  author = {Bauschke, Heinz H. and Bolte, J{\'e}r{\^o}me and Teboulle, Marc},
  year = 2017,
  journal = {Mathematics of Operations Research},
  volume = {42},
  number = {2},
  pages = {330--348},
  publisher = {INFORMS},
  issn = {0364-765X},
  doi = {10.1287/moor.2016.0817},
  urldate = {2026-06-22}
}

@article{LuFreundNesterov2018_relatively,
  title = {Relatively Smooth Convex Optimization by First-Order Methods, and Applications},
  author = {Lu, Haihao and Freund, Robert M. and Nesterov, Yurii},
  year = 2018,
  journal = {SIAM Journal on Optimization},
  volume = {28},
  number = {1},
  pages = {333--354},
  publisher = {{Society for Industrial and Applied Mathematics}},
  issn = {1052-6234},
  doi = {10.1137/16M1099546},
  urldate = {2026-06-22}
}

@article{pepit2024,
  title = {{{PEPit}}: Computer-Assisted Worst-Case Analyses of First-Order Optimization Methods in {{Python}}},
  author = {Goujaud, Baptiste and Moucer, C{\'e}line and Glineur, Francois and Hendrickx, Julien and Taylor, Adrien and Dieuleveut, Aymeric},
  year = {2024},
  journal = {Mathematical Programming Computation},
  volume = {16},
  pages = {337--367},
  publisher = {Springer},
}

@inproceedings{TaylorHendrickxGlineur2017_performance,
  title = {Performance Estimation Toolbox ({{PESTO}}): {{Automated}} Worst-Case Analysis of First-Order Optimization Methods},
  booktitle = {Conference on {{Decision}} and {{Control}}},
  author = {Taylor, Adrien B. and Hendrickx, Julien M. and Glineur, Francois},
  year = {2017}
}

@article{DiakonikolasJordan2021_generalized,
  title = {Generalized Momentum-Based Methods: {{A Hamiltonian}} Perspective},
  shorttitle = {Generalized {{Momentum-Based Methods}}},
  author = {Diakonikolas, Jelena and Jordan, Michael I.},
  year = {2021},
  journal = {SIAM Journal on Optimization},
  volume = {31},
  number = {1},
  pages = {915--944},
  publisher = {{Society for Industrial and Applied Mathematics}},
  issn = {1052-6234},
  doi = {10.1137/20M1322716}
}

@article{WibisonoWilsonJordan2016_variational,
  title = {A Variational Perspective on Accelerated Methods in Optimization},
  author = {Wibisono, Andre and Wilson, Ashia C. and Jordan, Michael I.},
  year = {2016},
  journal = {Proceedings of the National Academy of Sciences},
  volume = {113},
  number = {47},
  pages = {E7351--E7358},
  publisher = {National Academy of Sciences},
  issn = {0027-8424, 1091-6490},
  doi = {10.1073/pnas.1614734113},
  urldate = {2022-01-05},
  chapter = {PNAS Plus},
  copyright = {\copyright{} . Freely available online through the PNAS open access option.},
  langid = {english},
  pmid = {27834219}
}

@article{MoucerTaylorBach2023_systematic,
  title = {A Systematic Approach to {{Lyapunov}} Analyses of Continuous-Time Models in Convex Optimization},
  author = {Moucer, C{\'e}line and Taylor, Adrien and Bach, Francis},
  year = 2023,
  journal = {SIAM Journal on Optimization},
  volume = {33},
  number = {3},
  eprint = {https://doi.org/10.1137/22M1498486},
  pages = {1558--1586},
  doi = {10.1137/22M1498486}
}

@article{ZamaniAbbaszadehpeivastideKlerk2024_exact,
  title = {The Exact Worst-Case Convergence Rate of the Alternating Direction Method of Multipliers},
  author = {Zamani, Moslem and Abbaszadehpeivasti, Hadi and {de Klerk}, Etienne},
  year = 2024,
  journal = {Mathematical Programming},
  volume = {208},
  number = {1},
  pages = {243--276},
  issn = {1436-4646},
  doi = {10.1007/s10107-023-02037-0}
}

@article{NguyenSuhJiangMa2025_exact,
  title = {Exact Worst-Case Convergence Rates for {{Douglas}}--{{Rachford}} and {{Davis}}--{{Yin}} Splitting Methods},
  author = {Nguyen, Edward Duc Hien and Suh, Jaewook J. and Jiang, Xin and Ma, Shiqian},
  year = 2025,
  journal = {arXiv:2506.23475},
  eprint = {2506.23475},
  archiveprefix = {arXiv}
}

@book{Rockafellar1970_convex,
  title = {Convex {{Analysis}}},
  author = {Rockafellar, R. Tyrrell},
  year = 1970,
  eprint = {j.ctt14bs1ff},
  eprinttype = {jstor},
  publisher = {Princeton University Press},
  urldate = {2021-09-27}
}

@article{ZhouLiangShen2019_simple,
  title = {A Simple Convergence Analysis of {{Bregman}} Proximal Gradient Algorithm},
  author = {Zhou, Yi and Liang, Yingbin and Shen, Lixin},
  year = 2019,
  journal = {Computational Optimization and Applications},
  volume = {73},
  number = {3},
  pages = {903--912},
  publisher = {Kluwer Academic Publishers Norwell, MA, USA}
}

@article{AuslenderTeboulle2006_interior,
  title = {Interior {{Gradient}} and {{Proximal Methods}} for {{Convex}} and {{Conic Optimization}}},
  author = {Auslender, Alfred and Teboulle, Marc},
  year = 2006,
  journal = {SIAM Journal on Optimization},
  volume = {16},
  number = {3},
  pages = {697--725},
  publisher = {{Society for Industrial and Applied Mathematics}},
  issn = {1052-6234},
}

@article{GutmanPena2023_perturbed,
  title = {Perturbed {{Fenchel}} Duality and First-Order Methods},
  author = {Gutman, David H. and Pe{\~n}a, Javier F.},
  year = 2023,
  journal = {Mathematical Programming},
  volume = {198},
  number = {1},
  pages = {443--469},
  issn = {1436-4646},
  doi = {10.1007/s10107-022-01779-7}
}
